\newcommand{\change}{}
\newcommand{\red}[1]{{\color{red}{#1}}}
\newcommand{\pfrac}[2]{\left(\frac{#1}{#2}\right)} 
\newcommand{\be}{\begin{equation}}
\newcommand{\ee}{\end{equation}}
\newcommand{\dalign}[1]{\[\begin{aligned} #1 \end{aligned}\]}
\newcommand{\E}{\mathbb{E}}
\newcommand{\PR}{\mathbb{P}}
\def\cH{\mathcal{H}}
\newcommand{\rr}{\mathbf{r}}
\renewcommand{\P}{\mathbb{P}}
\renewcommand{\pmod}[1]{\allowbreak\mkern7mu({\operator@font mod}\,\,#1)}
\DeclareFontFamily{OT1}{rsfs}{}
\DeclareFontShape{OT1}{rsfs}{n}{it}{<-> rsfs10}{}
\DeclareMathAlphabet{\mathscr}{OT1}{rsfs}{n}{it}
\renewcommand{\cA}{\EuScript{A}}
\renewcommand{\cC}{\EuScript{C}}
\renewcommand{\cG}{\EuScript{G}}
\renewcommand{\cP}{\EuScript{P}}
\renewcommand{\cR}{\EuScript{R}}
\newcommand{\TrunSS}[1]{V_\cH(#1)}
\newcommand{\ccP}{\mathcal{P}}
\newcommand{\ccG}{\mathcal{G}}
\renewcommand{\dd}{\mathbf{d}}
\title[Large prime gaps and probabilistic models]
 {Large prime gaps and probabilistic models}
\author[W.~Banks]{William Banks}
\address{Department of Mathematics, 
 University of Missouri, 
 Columbia MO 65211, USA.}
\email{bankswd@missouri.edu}
\author[K.~Ford]{Kevin Ford}
\address{Department of Mathematics,
 University of Illinois, 1409 West Green St,
 Urbana, IL 61801, USA.}
\email{ford126@illinois.edu}
\author[T.~Tao]{Terence Tao}
\address{Department of Mathematics,
 UCLA, 405 Hilgard Ave,
 Los Angeles CA 90095, USA.}
\email{tao@math.ucla.edu}
\date{\today}
\begin{document}

\begin{abstract}
We introduce a new probabilistic model of the primes
consisting of integers that survive the sieving process
when a random residue class is selected for every prime
modulus below a specific bound. From a rigorous analysis of this model,
we obtain heuristic upper and lower bounds for the size of the
largest prime gap in the interval $[1,x]$. Our results are stated in terms of 
the extremal bounds in the interval sieve problem. The same 
methods also allow us to rigorously relate the validity of the
Hardy-Littlewood conjectures for an arbitrary set
(such as the actual primes) to lower bounds for the largest gaps
within that set.
\end{abstract}

\thanks{MSC Primary: 11N05; 11B83.}

\thanks{
\textbf{Keywords:} primes, prime gaps, Hardy-Littlewood conjectures, probabilistic models, sieves}

\thanks{\textbf{Acknowledgements:} 
The first author was supported by a grant from the University of
Missouri Research Board.
The second author was supported by National Science Foundation grants DMS-1501982 and DMS-1802139. The third author was supported by a Simons Investigator grant, the James and Carol Collins Chair, the Mathematical Analysis \&
Application Research Fund Endowment, and by NSF grant DMS-1764034. 
Part of the work was accomplished during a visit of the
the second author to the University of Missouri (August 2017),
UCLA (November 2018), the University of Cambridge (March 2019), 
the University of Oxford (March-June, 2019)
and the Institute of Mathematics
of the Bulgarian Academy of Sciences (June-July, 2019).
He thanks them all for their support.}

\maketitle



{\Large\section{Introduction}}

In this paper, we introduce a new probabilistic model $\cR \subset \N$
for the primes $\cP\defeq \{2,3,5,\ldots\}$
which can be analyzed rigorously
to make a variety of heuristic predictions.
In contrast to the well known prime model $\cC$ of Cram\'er~\cite{Cramer2} and
the subsequent refinement $\cG$ of Granville~\cite{Gr}, in which random sets
are formed by including positive integers with specific probabilities,
the model $\cR$ proposed here is comprised of integers that survive
the sieve when a random residue class is selected for every prime
modulus below a specific bound.
We determine the asymptotic behavior of the largest gap function,
$G_\cR(x)$, for the set $\cR$, where for any subset $\cA\subset \N$ we 
denote
\[ 
G_{\cA}(x)\defeq\max \{ b-a : [a,b] \subset [1,x]\text{~and~}
[a,b] \cap \cA=\varnothing \}.
\]
We conjecture that the primes $\cP$ have similar behavior.
Our bounds, given in Theorem~\ref{thm:conditional} below, are stated in terms of the 
extremal bounds in the interval sieve problem. 

At present, the strongest unconditional lower bound on $G_{\cP}(x)$
is due to Ford, Green, Konyagin, Maynard, and Tao~\cite{FGKMT}, who
have shown that\footnote{See Section \ref{notation-sec} for the
asymptotic notation used in this paper.}
\[
G_{\cP}(x)\gg\frac{\log x\,\log_2x\,\log_4x}{\log_3x},
\]
for sufficiently large $x$, with $\log_k x$ the $k$-fold iterated natural logarithm of $x$, whereas the strongest unconditional upper bound is
\[
G_{\cP}(x)\ll x^{0.525},
\]
a result due to Baker, Harman, and Pintz~\cite{BHP}. Assuming the Riemann Hypothesis, Cram\'er~\cite{Cramer1} showed that
\[
G_{\cP}(x) \ll x^{1/2} \log x.
\]

\bigskip


\subsection{Cram\'er's random model}
In 1936, Cram\'er~\cite{Cramer2} introduced a 
probabilistic model $\cC$ of primes, where each natural number $n\ge 3$
is selected for inclusion in~$\cC$
with probability $1/\log n$, the events $n \in \cC$ being jointly independent in $n$.
By Hoeffding's inequality (or Lemma~\ref{lem:Bennett-ineq} below), for any fixed $\eps>0$ one has
\be\label{eq:RH-Cramer}
\pi_\cC(x)\defeq|\{n\in\cC:n\le x\}|
=\int_2^x \frac{dt}{\log t}+O(x^{1/2+\eps})
\ee
with probability one.\footnote{See also \cite[eq. (5)]{Cramer2} 
for a more precise version of \eqref{eq:RH-Cramer}.}
The analogous statement for primes is equivalent to
the Riemann Hypothesis. 
In 1936, Cram\'er \cite{Cramer2} proved that
$\limsup_{x\to \infty} \frac{G_\cC(x)}{\log^2 x}=1$ almost surely, and remarked:``Obviously we may take this as a suggestion that, for the particular sequence of ordinary prime numbers $p_n$, some similar relation may hold.''
Later, Shanks \cite{Shanks} conjectured the stronger
bound $G_\cP(x) \sim \log^2 x$, also based on the analysis
of a random model very similar to Cram\'er's model. 
 This is a natural conjecture in light of the fact that
 \be\label{eq:Cramer-gap}
G_\cC(x) \sim \log^2 x
\ee
holds with probability one (although \eqref{eq:Cramer-gap}
doesn't appear to have been observed before).
In the literature, the statements $G_\cP(x)=O(\log^2 x)$
and $G_\cP(x) \asymp \log^2 x$ are sometimes referred to
as ``Cram\'er's conjecture.''
Several people have made refined conjectures,
e.g., Cadwell \cite{Cadwell} has suggested that $G_\cP(x)$
is well-approximated by $(\log x)(\log x-\log_2 x)$,
a conjecture which is strongly supported by numerical 
calculations of gaps.
We refer the reader to Granville \cite{Gr} or Soundararajan~\cite{SOUND} for additional information about
the Cramer model and subsequent developments.

Tables of prime gaps have been computed up to $10^{18}$ and beyond
(see \cite{Nicely-gaps}), thus
\[
\sup_{x\le 10^{18}} \frac{G_\cP(x)}{\log^2 x} \approx 0.9206,
\]
a consequence of the gap of size 1132 following the prime
 1693182318746371.
 See also Figure \ref{fig1} for a plot of $G(x)$ versus
 various approximations.

 \begin{figure}
 \includegraphics[height=3.65in]{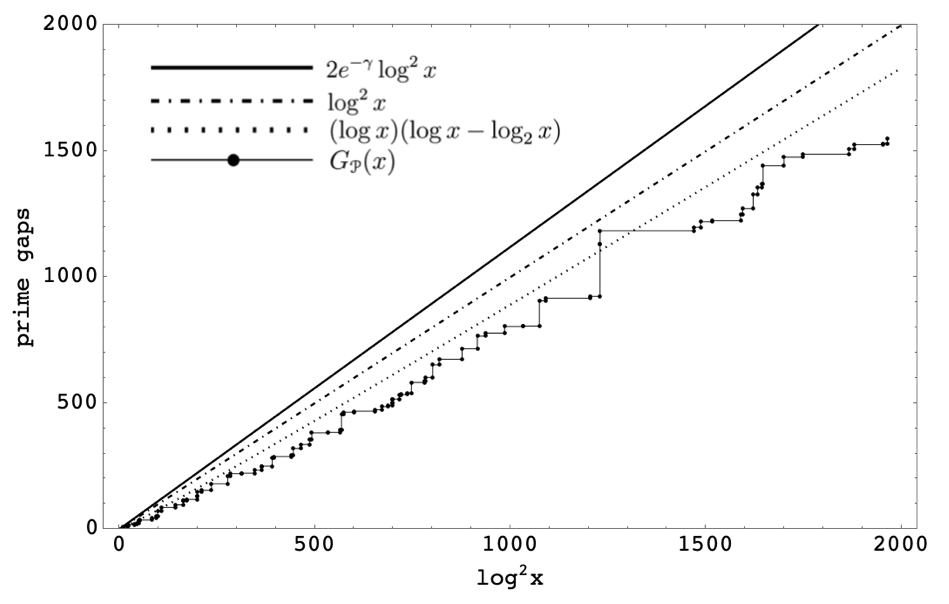}
\caption{$G_{\cP}(x)$ vs. various approximations}\label{fig1}
 \end{figure}

Despite its utility, the Cram\'er model has several well-documented weaknesses, the most dramatic one being that the model
does not predict the expected asymptotics for prime $k$-tuples.
Indeed, for \emph{any} finite set $\cH\subset\Z$, Cram\'er's model gives
\[
|\{n\le x: n+h\in\cC\text{~for all~}h\in\cH\}|
\sim \frac{x}{\log^{|\cH|} x} \qquad (x\to\infty)
\]
with probability one, whereas the analogous assertion for prime
numbers is false in general (for example, there is no integer $n$ such
that $n+h$ is prime for all $h\in\{0,1,2\}$).
The reason for the disparity is simple: for any prime $p$,
every prime other than $p$ must
lie in one of the residue classes $\{1,\ldots,p-1\}$ modulo~$p$
(we refer to this as the \emph{bias} of the primes modulo $p$),
whereas $\cC$ is equidistributed over \emph{all} residue
classes modulo $p$.

See~Pintz~\cite{Pintz} and Section \ref{sec:longer_intervals} below, for further discussion 
of flaws in the Cram\'er model.

%
%

\bigskip


\subsection{Granville's random model}
To correct this flaw in the Cram\'er model $\cC$,
Granville~\cite{Gr} altered the model, constructing a random
set $\cG$ as follows. For each interval $(x,2x]$
(with $x$ being a power of two, say), let $A$ be a parameter
such that $A=\log^{1-o(1)}x$ as $x\to\infty$, and
put $Q\defeq\prod_{p\le A} p$.
Discard those~$n$ for which $(n,Q)>1$, and select
for inclusion in $\cG$
each of the remaining integers $n\in (x,2x]$ 
with probability $\frac{Q/\phi(Q)}{\log n}$, where $\phi$ is the Euler totient function, the events $n \in \cG$ being jointly independent in $n$.
Since $\phi(Q)/Q$ is the density in $\ZZ$
of the set of integers coprime to $Q$,
this model captures the correct global distribution of primes;
that is, an analog of \eqref{eq:RH-Cramer} holds with $\cC$ replaced by $\cG$.
Unlike Cram\'er's model, however, Granville's model
also captures the bias of primes in residue classes modulo the primes $p\le A$. In particular, for any finite set $\cH$ of integers,
Granville's set satisfies the appropriate analog of the 
Hardy-Littlewood conjectures for counts of prime $k$-tuples
(see \eqref{eq:HL-asym} below).

In contrast with the Cram\'er model,
Granville's random set $\cG$ satisfies
\be\label{eq:Granville-gap}
G_\cG(x) \gtrsim \xi \log^2 x, \qquad \xi \defeq 2e^{-\gamma}=1.1229\cdots,
\ee
with probability one.
Granville establishes \eqref{eq:Granville-gap}
by choosing starting points $a$ with $Q\mid a$.
If $y\asymp \log^2 x$,
then there are about $y/\log y$ numbers $n\in [a,a+y]$ that are coprime
to every $p\le A$; this is a factor $\xi$ smaller than
the corresponding quantity for a random starting point $a$,
and it accounts for the difference between 
\eqref{eq:Cramer-gap} and \eqref{eq:Granville-gap}.
We elaborate on this idea in our analysis of
$G_\cR(x)$.

%
%

\bigskip


\subsection{A new probabilistic model for primes.}

Hardy and Littlewood \cite{HL} conjectured that the asymptotic relation
\be\label{eq:HL-asym}
|\{n\le x:n+h\in\cP\text{~for all~}h\in\cH\}|
\change{=\big(\fS(\cH)+o(1) \big)} \int_2^x \frac{dt}{\log^{|\cH|} t}
\ee
holds for any finite set $\cH\subset\Z$,
where $\fS(\cH)$ is the singular series given by
\be\label{eq:singular-series}
\fS(\cH)\defeq\prod_p\bigg(1-\frac{|\cH\bmod p|}p\bigg)
\bigg(1-\frac1p\bigg)^{-|\cH|}.
\ee
Note that the left side of \eqref{eq:HL-asym}
is bounded if $|\cH\bmod p|=p$ for
some prime $p$, since then for every integer $n$,
one has $p\,\mid\,n+h$ for some $h\in \cH$. 
\change{In this case, $\fS(\cH)=0$.}
 We say that $\cH$ is \emph{admissible}
if $|\cH\bmod p|<p$ for every prime $p$.

 To motivate our model set $\cR$, we first reinterpret
 \eqref{eq:HL-asym} probabilistically.
 The rapid convergence of the product
\eqref{eq:singular-series} implies that $\fS(\cH)$ is well
approximated by the truncation
\[
\fS_z(\cH)\defeq \prod_{p\le z}\bigg(1-\frac{|\cH\bmod p|}p\bigg)
\bigg(1-\frac1p\bigg)^{-|\cH|}=\TrunSS{z} \Theta_z^{-|\cH|},
\]
where
\be\label{V-Theta}
\TrunSS{z}\defeq\prod_{p\le z}\bigg(1-\frac{|\cH\bmod p|}p\bigg)\mand
\Theta_z\defeq\prod_{p\le z}\bigg(1-\frac1p\bigg).
\ee
We interpret $\TrunSS{z}$ as a product of local densities,
and $\Theta_z$ as a kind of global density.
In order to match the global density of primes as closely as
possible, we take $z=z(t)$ be the largest prime number for which
\change{$1/\Theta_{z(t)}\le\log t$};
this is well-defined for $t \geq e^2$, and by the prime number theorem we have
\be\label{eq:theta}
z(t)\sim t^{1/e^\gamma}\mand
\Theta_{z(t)}^{-1}=\log t+O(t^{-1/e^{\gamma}}).
\ee
It follows that
the right side of \eqref{eq:HL-asym} is 
\[
\sim \int_{e^2}^x\TrunSS{z(t)}\, dt.
\]
On the other hand, the quantity $V_\cH(z)$ can be written probabilistically as
\be\label{VHzSz}
\TrunSS{z}=\P(\cH\subset \cS_z),
\ee
where $\P$ denotes probability over a uniform choice of 
residue classes $a_p\bmod p$, for every prime $p$, with the random variables $a_p \bmod p$ being jointly independent in $p$, and $\cS_z$ is the random set
\be\label{eq:Sz}
\cS_z\defeq\Z \setminus \bigcup_{p\le z} (a_p\bmod p).
\ee
Thus, $\cH\subset \cS_z$ is the event
that $\cH$ survives sieving by random residue
classes modulo primes $p\le z$.
Consequently, \change{for admissible $\cH$,} \eqref{eq:HL-asym} takes the form
\[
|\{n\le x:n+h\in\cP\text{~for every~}h\in\cH\}|
\sim \int_{e^2}^x\P(\cH\subset \cS_{z(t)})\,dt.
\]
Thus, \eqref{eq:HL-asym} asserts that
the probability that a random shift of $\cH$ lies in $\cP$
is asymptotically the same as the probability that $\cH$
lies in a randomly sifted set.

Motivated by this probabilistic interpretation of \eqref{eq:HL-asym}, we now define
\be\label{eq:B-model-defn}
\cR\defeq\{n \geq e^2:n\in\cS_{z(n)}\}
\ee
as our random set of integers. Note that the number of primes being
sieved out increases as $n$ increases in order to mimic the slowly
decreasing density of the primes. This can be compared with the
description of $\cP$ using the sieve of Eratosthenes,
in which $z(n)$ is replaced by $n^{1/2}$ and the $a_p$ are replaced by $0$.

We believe that the random set $\cR$ is a useful model for primes,
especially for studying local statistics such as gaps.
On the other hand, the analysis of $\cR$ presents more difficulties
than the analysis of $\cC$ or $\cG$, owing to the more complicated
coupling between events such as $n_1\in \cR$ and $n_2\in\cR$ for
$n_1 \neq n_2$.


\subsection{Large gaps from the model}
 The behavior of 
$G_\cR(x)$ is intimately tied to extremal properties
of the \emph{interval sieve}. To describe this connection, for any $y \geq 2$ let $W_y$ denote the (deterministic) quantity
\be\label{eq:By-defn}
W_y \defeq \min\big|[0,y]\cap\cS_{(y/\log y)^{1/2}}\big|,
\ee
where $\cS_z$ is defined \change{in} \eqref{eq:Sz} and the minimum
in \eqref{eq:By-defn} is taken over all choices of the residue classes
$\{a_p\bmod p: p\le(y/\log y)^{1/2}\}$. At present, the sharpest known bounds on $W_y$ are
\be\label{eq:By-bounds}
\change{(4+o(1)) \frac{y\log_2y}{\log^2 y}
\le } W_{y} \le \frac{y}{\log y}+O\pfrac{y\log_2y}{\log^2 y},
\ee
the lower bound being a consequence of Iwaniec's theory
 (see \cite[Theorem 12.14]{FI} or~\cite{I71}) of the linear sieve,
and the upper bound resulting from the particular choice
$a_p\defeq 0\bmod p$ for all primes $p\le (y/\log y)^{1/2}$.
There is a folklore conjecture that the upper bound in \eqref{eq:By-bounds} is
closer to the truth. The problem of bounding $W_y$
belongs to a circle of problems centered on the question
about the maximum number of primes in 
some interval of length $x$; see e.g., \cite{HensRich} and\cite{ER}.

\begin{theorem}[Asymptotic for largest gap in the random model]\label{thm:conditional}
Put
\be\label{eq:gdef}
g(u) \defeq \max \{ y : W_y \log y \le u \}
\ee
\change{and define $\xi\defeq 2e^{-\gamma}=1.1229\ldots$. For any $\eps>0$, 
with probability one, we have
\[
g((\xi-\eps)\log^2 x) \leq G_\cR(x) \le g((\xi+\eps)\log^2 x)
\]
for all large $x$.}
\end{theorem}

The function $g(u)$ is evidently increasing, and by \eqref{eq:By-bounds} we see that 
\be\label{g-bounds} 
\change{(1+o(1)) u \le g(u) \le (1+o(1)) \frac{u\log u}{4\log_2 u} \qquad
(u\to\infty)}
\ee
and so Theorem~\ref{thm:conditional} implies that \change{for every $\eps>0$,}
almost surely \change{we have}
\be\label{GBx_extrema}
\change{(\xi-\eps)} \log^2 x \le G_\cR(x) \le \change{(\xi+\eps)} \frac{\log^2 x \log_2 x}{2\log_3 x}
\ee
\change{for all large $x$.}

\change{It seems likely that $g((\xi \pm \eps)\log^2 x)\to g(\xi\log^2 x)$ as $\eps\to 0$, although we cannot prove this. Assuming this,}
Theorem~\ref{thm:conditional}
 leads us to the following prediction for gaps between primes:

\begin{conjecture}[Asymptotic for largest gap in the primes]\label{conj:Gxhx}
We have 
\[
 \change{G_\cP(x) \sim g(\xi \log^2 x) \qquad (x\to\infty).}
\]
\end{conjecture}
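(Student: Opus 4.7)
The plan is necessarily conditional: Conjecture \ref{conj:Gxhx} is far beyond unconditional reach, so the goal is to transfer Theorem \ref{thm:conditional} from $\cR$ to $\cP$ by replacing the probabilistic independence that makes the random model tractable with a sufficiently uniform form of the Hardy-Littlewood conjectures for the actual primes. The two halves of the conjecture have rather different characters, so I treat them separately.

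For the lower bound $g((\xi-o(1))\log^2 x)\lesssim G_\cP(x)$, I would follow the Erd\H{o}s-Rankin paradigm, enhanced by the sieve construction underlying $W_y$. Set $y \defeq g((\xi-o(1))\log^2 x)$ and choose residue classes $\{a_p \bmod p : p \le z(y)\}$ realizing $|[0,y]\cap \cS_{z(y)}| = W_y$. Let $\cH \defeq [0,y] \cap \cS_{z(y)}$; this set is admissible by construction, and by \eqref{eq:By-bounds} has size $|\cH| = W_y \lesssim y/\log y \asymp \log x / \log_2 x$. By the Chinese Remainder Theorem, the $n$ with $n \equiv -a_p \pmod p$ for every $p \le z(y)$ form a residue class modulo $Q \defeq \prod_{p \le z(y)} p$, and for these $n$ every $h \in [0,y]\setminus \cH$ automatically makes $n+h$ composite. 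A Hardy-Littlewood hypothesis, uniform in admissible tuples $\cH$ of this growing size, applied to count those $n \le x$ in this residue class with $n+\cH$ containing a prime, yields an upper bound $o(x/Q)$; some good $n\le x$ must then exist, producing a prime gap of size at least $y$.

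For the upper bound $G_\cP(x) \lesssim g((\xi+o(1))\log^2 x)$, fix $y$ slightly larger than $g(\xi \log^2 x)$ and show that every length-$y$ subinterval of $[1,x]$ meets $\cP$. By definition of $g$ one has $W_y \log y > (\xi + \eps)\log^2 x$, so any sifting of $[n,n+y]$ by primes up to $z(y)$ leaves at least $W_y$ survivors. A uniform Hardy-Littlewood input would then supply a sharp asymptotic for the second moment $\sum_{n \le x}(\pi(n+y)-\pi(n))^2$, which via Chebyshev's inequality forces the set of $n \le x$ with $[n,n+y]\cap \cP = \varnothing$ to have vanishing density, and in fact to be eventually empty once the surplus $\eps$ is exploited through the monotonicity of $g$.

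The primary obstacle is the quality of the Hardy-Littlewood input required. For the lower bound, one needs the Hardy-Littlewood asymptotic uniformly over admissible tuples $\cH$ of size growing like $\log x/\log_2 x$, which is far beyond the classical fixed-$\cH$ formulation and is itself a massive open problem. For the upper bound, one additionally needs a matching variance estimate for primes in intervals of length $\log^{2+o(1)}x$, finer than what any presently conjectured form of the prime correlations supplies, and calibrated precisely to the extremal sieve constant $\xi$ emerging from $W_y$. Both inputs are the natural arithmetic analogues of the independence structure that renders $\cR$ analyzable, and formulating and justifying them is where the real difficulty lies.
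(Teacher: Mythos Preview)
This statement is a \emph{conjecture}; the paper does not prove it. It is offered purely as a heuristic prediction, motivated by the rigorous analysis of the random model $\cR$ (Theorem~\ref{thm:conditional}). The paper does supply partial conditional results toward the \emph{lower} bound---Theorems~\ref{thm:HLgaps} and~\ref{thm:HLgaps-avg}---but nothing toward the upper bound, so there is no ``paper's proof'' to compare against.

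That said, your lower-bound sketch has a genuine gap, not merely a uniformity issue. After restricting to $n$ in the special residue class modulo $Q=\prod_{p\le w_1}p$ (with $w_1=(y/\log y)^{1/2}$), each surviving $n+h$ for $h\in\cH$ lies in a reduced class mod $Q$, and the number of $n\le x$ in the class with $n+h$ prime is $\sim x/(\phi(Q)\log x)$. Your union bound over $h\in\cH$ therefore gives at most $W_y\,x/(\phi(Q)\log x)$ values of $n$ for which $n+\cH$ meets $\cP$. Since $Q/\phi(Q)\sim(\log y)/\xi$, this is $\asymp(x/Q)\cdot W_y\log y/(\xi\log x)\asymp(x/Q)\log x$, not $o(x/Q)$. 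Heuristically, each special interval $[n,n+y]$ contains about $(1-\eps)\log x$ primes on average, and the probability of seeing none is $x^{-(1-\eps)}$; capturing this requires inclusion--exclusion over \emph{subsets} of $\cH$ of size up to $\asymp\log x$. That is precisely what the paper does in proving Theorem~\ref{thm:HLgaps-avg}, via the Brun sieve (Lemma~\ref{lem:UK}) together with an averaged Hardy--Littlewood hypothesis for tuples of that size---a quite different mechanism from the single-tuple first-moment argument you describe.

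Your upper-bound sketch is also insufficient in kind, not just in strength. A second-moment estimate for $\pi(n+y)-\pi(n)$ combined with Chebyshev can at best show that $\{n\le x:[n,n+y]\cap\cP=\varnothing\}$ has small density; it cannot force the set to be \emph{empty}, and Maier's phenomenon already guarantees persistent constant-factor fluctuations in prime counts on this scale. For the random model, emptiness follows from a first-moment bound $\E|\{n:[n,n+y]\cap\cR=\varnothing\}|\ll x^{-\eps/2}$ together with Borel--Cantelli; the deterministic analogue would require the actual count to vanish, and no Hardy--Littlewood-type input, conjectured or otherwise, is presently formulated to deliver that.
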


 Assuming the previously mentioned folklore conjecture that the lower bound in \eqref{g-bounds} is asymptotically tight in the sense that $g(u)\sim u$ as $u\to\infty$, we are then led to the prediction that
\[
G_\cP(x) \sim \xi \log^2 x \change{\qquad (x\to\infty).}
\]
This matches the lower bound \eqref{eq:Granville-gap}
for the gap in the Granville model $\cG$.


\subsection{Hardy-Littlewood from the model}

It has been conjectured that a much more precise version
of \eqref{eq:HL-asym} holds (see, e.g., Montgomery and
Soundararajan~\cite{MS}), namely:
\be\label{eq:HL-precise}
|\{n\le x:n+h\in\cP\text{~for all~}h\in\cH\}|
=\fS(\cH) \int_2^x \frac{dt}{\log^{|\cH|}t}+O(x^{1/2+\eps}).
\ee
There is some computational evidence for this strong estimate for certain small sets $\cH$; see
Section \ref{sec:comments_HL}.
Granville's model set $\cG$, by contrast, satisfies the
analogous relation with an error term 
that cannot be made smaller than $O(x/\log^{|\cH|+1} x)$.
This occurs because $\cG$ is only capturing the bias of $\cP$ modulo
 primes $p\le A$; that is, the set $\cG$ satisfies the analog
 of \eqref{eq:HL-precise} with $\fS(\cH)$ replaced by 
 $\fS_A(\cH)$.

The model set $\cR$ given by \eqref{eq:B-model-defn}
has been designed with the Hardy-Littlewood conjectures in mind. We establish
a uniform analog of \eqref{eq:HL-precise} that holds in a
wide range of $\cH$. 

\begin{theorem}[Hardy-Littlewood conjecture for the random model]\label{thm:UHL}
Fix $c\in[1/2,1)$ and $\eps>0$.
 Almost surely, we have
\[
|\{n\le x:n+h\in\cR\text{~for all~}h\in\cH\}|
=\fS(\cH)\int_{2}^{x} \frac{dt}{\log^{|\cH|} t}+
O\big(x^{1-\frac{1-c}{8c^2-2c}+\eps}\big)
\]
uniformly for all admissible tuples $\cH$ satisfying
$|\cH| \le \log^c x$ and in the range 
$\cH \subset [0,\exp( \frac{\log^{1-c} x}{\log_2 x} )]$.
\end{theorem}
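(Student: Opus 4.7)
Fix $c \in [1/2,1)$ and $\eps > 0$, and write $\delta = \tfrac{1-c}{8c^2-2c}$ and $H = \exp(\log^{1-c} x/\log_2 x)$ for the target error exponent and the maximum diameter of $\cH$. My plan is to establish the claimed asymptotic for each fixed admissible $\cH$ via a high-moment concentration estimate, and then handle all tuples uniformly by a union bound; the number of admissible $\cH$ in the allowed range is bounded by $H^{\log^c x} = \exp(\log x/\log_2 x)$, which is subpolynomial in $x$, so any power-of-$x$ saving from concentration absorbs the union cost.

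\emph{Step 1 (expected value).} For each admissible $\cH \subset [0,H]$ with $|\cH| \le \log^c x$ and each $n \in [e^2, x]$, set $X_n^\cH = \mathbf{1}[n+\cH \subset \cR]$; this event requires $a_p \bmod p$ to avoid $n + \cH$ for every prime $p \le z_n := \max_{h \in \cH} z(n+h)$. Joint independence of the $a_p$ across primes and translation invariance of $|\,\cdot\,\bmod p|$ give
$$\E X_n^\cH = \prod_{p \le z_n}\Bigl(1 - \tfrac{|(n+\cH)\bmod p|}{p}\Bigr) = V_\cH(z_n).$$
Since $H = x^{o(1)}$, $z_n = z(n)(1+o(1))$. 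Writing $V_\cH(z_n) = \fS_{z_n}(\cH)\Theta_{z_n}^{|\cH|}$, the tail $\fS(\cH)/\fS_{z_n}(\cH) = 1 + O(|\cH|^2/z_n) = 1 + O(x^{-1/e^\gamma + o(1)})$, and \eqref{eq:theta} converts $\Theta_{z_n}^{|\cH|}$ into $(\log n)^{-|\cH|}$. Summing over $n$ yields
$$\sum_{e^2 \le n \le x} \E X_n^\cH = \fS(\cH)\int_2^x \frac{dt}{\log^{|\cH|} t} + O(x^{1 - 1/e^\gamma + o(1)}),$$
safely below the target.

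\emph{Step 2 (moment bound).} Let $Y_n^\cH = X_n^\cH - \E X_n^\cH$ and $S_\cH = \sum_{n \le x} Y_n^\cH$. The core of the proof is to bound $\E S_\cH^{2k}$ for a well-chosen parameter $k$. Prime-wise independence gives, for any tuple $(n_1,\ldots,n_{2k})$,
$$\E\prod_i X_{n_i}^\cH = \prod_p\Bigl(1 - \tfrac{1}{p}\bigl|\textstyle\bigcup_i (n_i + \cH)\bmod p\bigr|\Bigr).$$
For all primes $p$ outside a small exceptional set (those dividing some $n_i - n_j + h - h'$, or smaller than $2H + \max|n_i - n_j|$), the translates $n_i + \cH$ are pairwise disjoint mod $p$ and the product approximately factors as $\prod_i \E X_{n_i}^\cH$. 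Expanding $\prod_i Y_{n_i}^\cH$ and subtracting means cancels the fully factored contribution, leaving only terms indexed by ``cluster patterns'' in which some block of indices has $|n_i - n_j| \lesssim H$ or shares an exceptional small prime. A combinatorial accounting of cluster patterns, combined with the bounds $|\cH| \le \log^c x$ and $H = \exp(\log^{1-c}x/\log_2 x)$, then produces an estimate of the form
$$\E S_\cH^{2k} \le (Ck)^{O(k)} \cdot x^{k(1-\eta)} \cdot (\log x)^{-k|\cH|}$$
for an explicit $\eta = \eta(c) > 0$.

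\emph{Step 3 (Markov, union bound, Borel-Cantelli).} Markov's inequality applied to the above gives
$$\P\bigl(|S_\cH| > x^{1-\delta+\eps}\bigr) \le x^{-(2\delta - 2\eps)k - \eta k + O(k\log_2 x/\log x)}.$$
Choosing $k$ of order $\log^{2c} x$ forces this to be smaller than $x^{-A}$ for any fixed $A$; the value $\delta = \tfrac{1-c}{8c^2-2c}$ emerges from optimizing the interplay between $k$, $|\cH|$, the cluster radius $H$, and the gain $\eta$. A union bound over the $\exp(\log x/\log_2 x)$ admissible tuples is then negligible. Applying this tail estimate along a dyadic sequence $x_j = 2^j$, Borel-Cantelli makes the bound almost-sure at each $x_j$, and monotonicity of the counting function in $x$ interpolates to all large $x$.

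\emph{Main obstacle.} The principal technical difficulty lies in the moment estimate of Step 2: one must accurately track correlations in $\prod_i Y_{n_i}^\cH$ across all cluster patterns on the indices $n_i$, and control the contribution of the exceptional primes $p$ (those comparable in size to $H$ or dividing some difference $n_i - n_j + h - h'$) where the independence structure degrades. The algebraic cancellation from centering must be combined with sharp combinatorial bookkeeping to achieve the explicit gain $\eta$ responsible for the exponent $\tfrac{1-c}{8c^2-2c}$; balancing all these ingredients while retaining uniformity over $\cH$ is what drives the precise numerology of the error term.
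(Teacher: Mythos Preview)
Your outline has the right architecture (concentration for fixed $\cH$, union bound over tuples, dyadic Borel--Cantelli), and Step~1 and the union-bound bookkeeping are essentially as in the paper. But the heart of the argument is Step~2, and there you have not supplied a proof: you assert a $2k$-th moment bound of a specific shape and say the exponent $\tfrac{1-c}{8c^2-2c}$ ``emerges from optimizing the interplay,'' without doing the calculation. That is precisely the nontrivial content of the theorem, and as written it is a gap.

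The paper takes a simpler route that you should adopt: a \emph{second} moment bound (variance) plus Chebyshev already suffices, with no need for high moments. The key input is Proposition~\ref{prop:variance}, which shows that on an interval $(x,x+y]$ the variance of $\sum_n X_n^\cH$ is at most
\[
y\,V_\cH(z(x))^2\, y^{\frac{4\varrho^2-1}{4\varrho^2-\varrho}}\exp\!\Big(O\big(\tfrac{\log x\,\log_3 x}{\log_2 x}\big)\Big)
\quad\text{when } |\cH|=(\log x)^\varrho,\ \tfrac12\le \varrho<1.
\]
With $\varrho=c$ this gives variance $\ll y\,x^{\frac{4c^2-1}{4c^2-c}+o(1)}$, and Chebyshev at threshold $x^{1-\delta+\eps/2}$ yields a probability $\ll y/x^{1+\eps/2}$ exactly when $2\delta = 1-\tfrac{4c^2-1}{4c^2-c}=\tfrac{1-c}{4c^2-c}$, i.e.\ $\delta=\tfrac{1-c}{8c^2-2c}$. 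So the exponent is not an optimization over a moment order~$k$; it is forced by the second-moment exponent $\tfrac{4c^2-1}{4c^2-c}$, which in turn comes from a delicate two-point correlation computation (splitting primes at a threshold $v\asymp \log x/\log_2 x$, introducing multiplicative weights $f_a(d_a)$, and balancing a Rankin-type parameter against the divisor structure of the differences $m-a$). Even the pairwise case takes real work; your proposed $2k$-point cluster expansion with $k\asymp \log^{2c} x$ would be substantially harder and is unnecessary.

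In short: replace your high-moment plan by a variance bound of the above form and Chebyshev; then your Steps~1 and~3 go through as you describe. The missing piece is to actually prove that variance bound, which is the real substance here.
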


In particular, when \change{$c=\frac12$} the error term
is $O(x^{1/2+o(1)})$, which matches \eqref{eq:HL-precise} provided that
$\cH \subseteq [0,\exp\{\frac{\log^{1/2} x}{\log_2 x} \}]$
and $|\cH|\le \log^{1/2} x$.
\change{As we will invoke the Borel-Cantelli lemma in the proof,
the constant implied by the $O-$symbol exists almost surely, but we
cannot give any uniform bound on it. This remark applies to the next result as well.}

For the special case $\cH=\{0\}$ we have the following
more precise statement.

\begin{theorem}[Riemann hypothesis for the random model]\label{thm:RHB}
Fix $c>3/2$. Almost surely, we have
\[
|\{n\in\cR:n\le x\}|=\int_2^x\frac{dt}{\log t}+O(x^{1/2} \log^c x).
\]
\end{theorem}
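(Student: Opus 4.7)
The plan is to prove Theorem~\ref{thm:RHB} by the second-moment method applied to the random counting function $X(x) \defeq |\{n\in\cR : n\le x\}|$: I would combine Chebyshev's inequality along a geometric sequence of sample points with an interpolation argument handled via a second variance estimate for increments.

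For the expectation, the joint independence of the residues $a_p$ gives $\P(n\in\cR) = \Theta_{z(n)}$, and by \eqref{eq:theta} this equals $1/\log n + O(n^{-e^{-\gamma}}/\log^2 n)$. Summing over $n\le x$ yields $\E X(x) = \int_2^x dt/\log t + O(x^{1-e^{-\gamma}})$, which is absorbed into the target error since $1-e^{-\gamma} < 1/2$. For the variance, the key calculation is that for $n_1<n_2\le x$ with $d = n_2-n_1$, factoring the joint sieving event over the three prime ranges $p\le z(n_1)$, $z(n_1)<p\le z(n_2)$, and $p>z(n_2)$ gives $\P(n_1,n_2\in\cR) = \Theta_{z(n_1)}\Theta_{z(n_2)}\,\fS_{z(n_1)}(\{0,d\})$, and hence
\[
\mathrm{Cov}\bigl(\mathbf{1}_{n_1\in\cR},\mathbf{1}_{n_2\in\cR}\bigr) = \Theta_{z(n_1)}\Theta_{z(n_2)}\bigl(\fS_{z(n_1)}(\{0,d\}) - 1\bigr).
\]
Summing over pairs and invoking a Montgomery-Soundararajan-type average estimate for $\sum_{d\le H}(\fS(\{0,d\}) - 1)$, with careful control of the truncation tails, I would establish $\mathrm{Var}\, X(x) \ll x \log^2 x$.

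At the sample points $x_k \defeq 2^k$, Chebyshev's inequality then yields $\P(|X(x_k) - \E X(x_k)| > x_k^{1/2}\log^c x_k) \ll \log^{2-2c} x_k \ll k^{2-2c}$, which is summable in $k$ precisely when $c>3/2$, and Borel-Cantelli gives the target bound at $x_k$ almost surely. To extend to arbitrary $x$, since $\mathrm{Li}(x_{k+1}) - \mathrm{Li}(x_k) \asymp x_k/\log x_k$ dominates the target error, monotonicity of $X$ alone cannot bridge the gap; I would re-apply the variance machinery to the centered increment $Z_k(x) \defeq (X(x) - X(x_k)) - (\mathrm{Li}(x) - \mathrm{Li}(x_k))$ on $[x_k, x_{k+1}]$ and combine it with a maximal/chaining inequality (exploiting the monotonicity of $X$ to reduce the supremum to a fine dyadic net inside each $[x_k, x_{k+1}]$) to bound $\sup_{x\in[x_k, x_{k+1}]}|Z_k(x)|$ by $O(x_k^{1/2}\log^c x_k)$ almost surely via a second Borel-Cantelli pass.

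The main obstacle is the variance estimate: quantifying the singular-series average $\sum_{d\le H}(\fS_z(\{0,d\}) - 1)$ uniformly in the truncation parameter $z$ with enough logarithmic savings to beat the trivial $O(H)$ bound after partial summation is the technical heart of the argument. The precise number of saved log factors dictates the exponent $2$ in the bound $\mathrm{Var}\, X(x) \ll x \log^2 x$, and consequently the threshold $c > 3/2$ in the theorem. A secondary but non-trivial issue is the maximal inequality in the interpolation step, which must not lose additional log factors beyond those already committed in the Chebyshev step.
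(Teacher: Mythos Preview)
Your architecture---variance bound, Chebyshev along a dyadic grid, Borel--Cantelli, then a chaining argument to interpolate---is exactly what the paper does. The discrepancy is quantitative and sits in the variance. The paper invokes its Proposition~\ref{prop:variance} with $\cH=\{0\}$ to obtain
\[
\V\Bigl(\sum_{x<n\le x+y}\ind{\cR}(n)-\int_x^{x+y}\frac{dt}{\log t}\Bigr)\ll \frac{y}{\log x}
\]
for every subinterval $(x,x+y]$, which is a factor $\log^3 x$ sharper than your stated $\V\,X(x)\ll x\log^2 x$. This matters precisely at the interpolation step you flag. The paper's chaining runs over all dyadic subintervals of $(x,2x]$: at scale $2^m$ there are $x/2^m$ intervals, each with variance $\ll 2^m/\log x$; Chebyshev at threshold $\tfrac12 x^{1/2}(\log x)^{c-1}$ plus a union bound over all $m$ and all intervals gives total failure probability $\ll(\log x)^{2-2c}$, and summing the $O(\log x)$ scales then recovers the final bound $x^{1/2}(\log x)^c$. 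If instead the variance on a length-$2^m$ interval were $\ll 2^m\log^2 x$, the same computation yields failure probability $\ll(\log x)^{5-2c}$, forcing $c>3$ rather than $c>3/2$. So your own caveat that the maximal step ``must not lose additional log factors'' is exactly where the stated bound falls short.

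Your direct covariance route is a legitimate alternative to quoting Proposition~\ref{prop:variance}, and carried out carefully it should in fact deliver the same $y/\log x$: the diagonal contributes $\asymp y/\log x$, while the off-diagonal sum $\sum_{d\le y}\bigl(\fS_z(\{0,d\})-1\bigr)$ is $O(\log y)$ by the classical singular-series average, so after multiplying by $\Theta_{z}^2\asymp(\log x)^{-2}$ and summing over the $\asymp y$ basepoints one again gets $\ll y/\log x$. The gap is therefore not in your strategy but in the exponent you wrote down; with the sharp variance in hand, your outline coincides with the paper's proof essentially verbatim.
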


Similar results can be obtained for any \emph{fixed} tuple $\cH$; we leave this to the interested reader.


\subsection{Large gaps from Hardy-Littlewood}

The results stated above have a partial deterministic converse.
We show that \emph{any} set of integers
that satisfies a uniform analogue of the Hardy-Littlewood conjecture
\eqref{eq:HL-precise} has large gaps. The maximal length of the gaps
depends on the range of uniformity of 
\eqref{eq:HL-precise}, and comes close to order $\log^2 x$
with a strong uniformity assumption. Our result
extends a theorem of Gallagher \cite{GAL}, who showed
that \change{if, for every fixed $k\in \NN$ and real $c>1$, the primes obey the Hardy-Littlewood conjectures uniformly for every admissible $k$-tuple $\cH \subset [0,c\log x]$}, then the gaps normalized by $\frac{1}{\log x}$ enjoy an exponential distribution asymptotically.
His approach applies to any set $\cA$ in place of the primes $\cP$.

\begin{theorem}[Hardy-Littlewood implies large gaps]\label{thm:HLgaps}
Assume $\frac{2\log_2 x}{\log x} \le \kappa \le 1/2$ and that
 $\cA \subset \N$ satisfies the
Hardy-Littlewood type conjecture
\be\label{eq:HLA}
|\{n\le x:n+h\in\cA\text{~for all~}h\in\cH\}|
=\fS(\cH) \int_{2}^{x} \frac{dt}{\log^{|\cH|} t}+
O(x^{1-\kappa})
\ee
uniformly over all tuples $\cH \subset [0,\log^2 x]$
with $|\cH| \le \frac{\kappa \log x}{2\log_2 x}$.
Then 
\[
G_\cA(x) \gg \frac{\kappa \log^2 x}{ \log_2x}
\]
for all large $x$, where the implied constant is absolute.
\end{theorem}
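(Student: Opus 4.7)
The plan is a Bonferroni-type inclusion--exclusion argument driven by the Hardy--Littlewood hypothesis \eqref{eq:HLA}, in the spirit of Gallagher \cite{GAL} but with uniformity carefully tracked as the tuple size grows. Fix a small absolute constant $c_0 > 0$ and set
\[
y \defeq c_0\,\kappa \log^2 x/\log_2 x, \qquad \lambda \defeq y/\log x,
\]
and let $K$ be the largest odd integer satisfying $K \le \kappa\log x/(2\log_2 x)$; the lower bound $\kappa \ge 2\log_2 x/\log x$ ensures $K \ge 1$, while choosing $c_0$ small ensures $K \gg \lambda$. Writing $N(n) \defeq |\{h \in [1,y] : n+h \in \cA\}|$, it suffices to prove $|\{n \le x-y : N(n) = 0\}| \ge 1$ for large $x$, since any such $n$ produces a gap of length at least $y$.

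The odd-level Bonferroni inequality gives the pointwise lower bound
\[
\mathbf{1}[N(n) = 0] = \prod_{h=1}^{y}\bigl(1-\mathbf{1}[n+h\in\cA]\bigr) \ge \sum_{\substack{S\subset[1,y]\\|S|\le K}}(-1)^{|S|}\prod_{h\in S}\mathbf{1}[n+h\in\cA].
\]
Summing over $n\le x-y$ and applying \eqref{eq:HLA} term-by-term (legitimate since $S\subset[0,\log^2 x]$ and $|S|\le K\le\kappa\log x/(2\log_2 x)$) produces a main term
\[
x\sum_{k=0}^{K}\frac{(-1)^k}{\log^k x}\sum_{\substack{S\subset[1,y]\\|S|=k}}\fS(S)
\]
plus an error of size at most $\#\{S : |S|\le K\}\cdot O(x^{1-\kappa})$. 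Since $y/K = 2c_0\log x$, one has $\#\{S\} \le (ey/K)^{K + O(1)} \le x^{\kappa/2 + o(1)}$, so the total error is $O(x^{1-\kappa/2 + o(1)})$.

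For the main term I would invoke a uniform Gallagher-type identity
\[
\sum_{\substack{S\subset[1,y]\\|S|=k}}\fS(S) = \frac{y^k}{k!}\bigl(1+o(1)\bigr) \qquad (0\le k\le K),
\]
a version of which is classical for fixed $k$ and can be extended to uniform ranges via Montgomery--Soundararajan-style moment estimates on singular series. Substituting reduces the main term to $x\sum_{k=0}^{K}(-\lambda)^k/k!\,(1+o(1))$. Since $K/\lambda = 1/(2c_0) \gg 1$, the Leibniz alternating-series bound $|\sum_{k=0}^{K}(-\lambda)^k/k! - e^{-\lambda}| \le \lambda^{K+1}/(K+1)!$ is negligible relative to $e^{-\lambda}$ for $c_0$ sufficiently small. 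Thus the main term is of size $\asymp xe^{-\lambda} = x^{1-c_0\kappa/\log_2 x}$, which, for any fixed small $c_0$ and sufficiently large $x$, comfortably exceeds the error $x^{1-\kappa/2+o(1)}$. Hence $|\{n\le x-y : N(n)=0\}|$ tends to infinity, furnishing the desired gap.

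The principal obstacle will be establishing the Gallagher-type identity uniformly in $k$ up to $K = \Theta(\kappa\log x/\log_2 x)$, with an error small enough to survive being weighted by $\lambda^k/k!$ and summed in alternation; the fixed-$k$ case is standard, but the uniform regime requires a careful analysis of singular-series moments. A secondary bookkeeping task is calibrating $c_0$ once and for all so that three small parameters---the Bonferroni truncation error (controlled by $K\gg\lambda$), the combinatorial error in the Gallagher identity, and the $O(x^{1-\kappa})$ Hardy--Littlewood error---all remain subordinate to the main term $xe^{-\lambda}$.
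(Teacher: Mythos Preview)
Your Bonferroni setup and error bookkeeping match the paper's, but the handling of the main term has a genuine gap that the paper's method is specifically designed to avoid.

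You propose to insert a uniform Gallagher identity $\sum_{|S|=k}\fS(S)=\frac{y^k}{k!}(1+o(1))$ and then sum $\sum_{k\le K}(-\lambda)^k/k!$ to recover $e^{-\lambda}$. The problem is the size of the cancellation. The individual terms $\lambda^k/k!$ peak near $k\approx\lambda$ at magnitude $\asymp e^{\lambda}/\sqrt{\lambda}$, while the alternating sum is $\approx e^{-\lambda}$; thus the terms cancel to one part in $e^{2\lambda}$. A multiplicative $(1+o(1))$ error on each term produces an additive error of order $o(1)\cdot\sum_k \lambda^k/k!=o(e^{\lambda})$ in the sum, which completely swamps the target $e^{-\lambda}$. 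To make your route work you would need the Gallagher identity with error $o(e^{-2\lambda})$ uniformly in $k\le K$, and nothing close to this is known; indeed the paper explicitly remarks (Section~2.7) that the sharpest available uniform versions hold only for $k\ll\log_2 y$, far below $K\asymp\kappa\log x/\log_2 x$.

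The paper sidesteps this entirely by \emph{not} separating the singular-series average from the alternating sign. It replaces $\fS(\cH)/\log^k t$ by the truncated product $V_\cH(z(t))$ and uses the identity $\sum_{|\cH|=k}V_\cH(z)=\E\binom{S_z}{k}$, where $S_z=|[0,y]\cap\cS_z|$ is the survivor count under random sieving. The alternating sum then becomes $\E\sum_{k\le K}(-1)^k\binom{S_z}{k}$, which for odd $K$ is bounded below by $\P(S_z=0)-\E\binom{S_z}{K+1}$. Both of these are controlled directly by sieve estimates (Lemma~\ref{lem:SzSP} and Corollary~\ref{cor:binomSk}), with no need for any cancellation. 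In effect the paper performs the inclusion--exclusion \emph{inside} the expectation, where it is exact and positive, rather than trying to reconstruct the cancellation from term-by-term asymptotics.
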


We also have the following variant of Theorem~\ref{thm:HLgaps}, which has a stronger conclusion but requires a uniform Hardy-Littlewood conjecture for larger tuples (of cardinality as large as $\log x \log_2 x$); on the other hand, this conjecture is only needed in a certain averaged sense.

\begin{theorem}[Averaged Hardy-Littlewood implies large gaps]\label{thm:HLgaps-avg}
Fix $0<c<1$.
Suppose that $\cA \subset \N$ satisfies the averaged
Hardy-Littlewood type conjecture
\be\label{eq:HLA*}
\ssum{\cH \subset [0,y] \\ |\cH|=k}
|\{n\le x:n+h\in\cA\text{~for all~}h\in\cH\}|
=\ssum{\cH \subset [0,y] \\ |\cH|=k} 
\int_{2}^{x} \frac{\fS_{z(t)}(\cH)}{\log^k t}\,dt+
O(x^{1-c})
\ee
uniformly for $k \leq \frac{Cy}{\log x}$ and $\log x \leq y \leq (\log^2 x)\log_2 x$, where $C$ is a sufficiently large absolute constant.
Then
\[
G_{\cA}(x) \ge g((c\,\xi - o(1)) \log^2 x)
\qquad(x\to\infty),
\]
where $g$ is defined in \eqref{eq:gdef}.
\end{theorem}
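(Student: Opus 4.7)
The plan is, for each $c' < c$, to exhibit a gap of length $\geq y := g(c'\xi \log^2 x)$ in $\cA \cap [1, x]$. By \eqref{g-bounds} and the assumption $c < 1$, this $y$ lies in the admissible range $[\log x, (\log^2 x)\log_2 x]$ of \eqref{eq:HLA*} and satisfies $W_y \log y \leq c'\xi \log^2 x$.

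Let $N(y) := |\{a \leq x - y : (a, a+y] \cap \cA = \varnothing\}|$. Expanding $\prod_{h=1}^y(1 - \mathbf{1}_{a+h \in \cA})$ gives $N(y) = \sum_k (-1)^k S_k$ with $S_k := \sum_{\cH \subset [1,y], |\cH|=k}|\{a \leq x-y : a+h\in \cA\ \forall h\}|$. Bonferroni truncation at an odd $K := \lfloor Cy/\log x \rfloor$, for $C$ a large absolute constant, yields $N(y) \geq \sum_{k=0}^K (-1)^k S_k$ via the identity $\sum_{k=0}^K(-1)^k\binom{m}{k} = (-1)^K \binom{m-1}{K}$. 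Applying \eqref{eq:HLA*} cardinality by cardinality converts this to $\int_2^x B_K(t)\, dt + O(K x^{1-c})$, where
\[
B_K(t) := \sum_{k=0}^K (-1)^k \sum_{\substack{\cH \subset [0,y] \\ |\cH|=k}} \frac{\fS_{z(t)}(\cH)}{\log^k t}.
\]
Using $\fS_{z(t)}(\cH)/\log^{|\cH|} t = V_\cH(z(t))(1+o(1))$ from \eqref{eq:theta}--\eqref{VHzSz} together with \eqref{VHzSz}, and writing $M(t) := |[0,y] \cap \cS_{z(t)}|$, the same binomial identity (applied pointwise to the random $M(t)$) gives
\[
B_K(t) = (1+o(1))\Bigl[\,\mathbb{P}(M(t)=0) - \mathbb{E}\bigl[\tbinom{M(t)-1}{K}\mathbf{1}_{M(t) \geq K+1}\bigr]\,\Bigr].
\]
Chernoff-type bounds for the sieve count $M(t)$ (whose mean is $(y+1)\Theta_{z(t)} \asymp y/\log t$, comfortably less than $K$ once $C$ is large) make the correction term subpolynomial in $x$, so $B_K(t) = (1+o(1)) P(y,t)$ with $P(y,t) := \mathbb{P}(M(t) = 0)$.

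To lower-bound $P(y,t)$, I condition on a specific configuration $(a_p^*)_{p \leq \zeta}$ (with $\zeta := (y/\log y)^{1/2}$) that realizes the minimum $|[0,y] \cap \cS_\zeta| = W_y$; this event has probability $\prod_{p\le \zeta}1/p = x^{-o(1)}$ since $\zeta = O(\log x/\sqrt{\log_2 x})$. Conditional on it, the $W_y$ survivors must be covered by the random residues $a_p$ for $\zeta < p \leq z(t)$; the sieve inclusion--exclusion $\mathbb{P}(\text{cover}) = \sum_{k=0}^{W_y}(-1)^k\binom{W_y}{k}\prod_{\zeta<p\le z(t)}(1-k/p)$ combined with Mertens asymptotics $\prod_{\zeta < p\le z(t)}(1 - k/p) \sim (\log\zeta/\log z(t))^k$ gives
\[
P(y, t) \;\gtrsim\; x^{-o(1)}\, \exp\!\Bigl(-\tfrac{W_y \log\zeta}{\log z(t)}\Bigr) \;\geq\; x^{-c'-o(1)} \quad (t \in [x/2,\,x]),
\]
using $\log z(t)\sim e^{-\gamma}\log t$, $\log\zeta \sim (\log y)/2$, and $W_y \log y \leq c'\xi \log^2 x$. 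Integrating, the main term is $\gtrsim x^{1-c'-o(1)}$, which dominates the $O(Kx^{1-c})$ error since $c' < c$. Hence $N(y) \geq 1$, so $G_\cA(x) \geq g(c'\xi \log^2 x)$; letting $c' \nearrow c$ completes the proof.

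The main technical obstacle is producing a sufficiently sharp Chernoff bound for the sieve count $M(t)$ to render the Bonferroni correction truly negligible: the hypothesis restricts $K$ to a constant multiple of $\mathbb{E} M(t) \asymp \log x$, so the large-deviation estimate must hold with constant (not vanishing) relative deviation. A secondary difficulty is controlling the Mertens-type asymptotic $\prod(1-k/p) \sim (\log\zeta/\log z(t))^k$ uniformly over the dominant range $k \lesssim c'\log x$, where care is needed for the primes $p \lesssim y$ where the linear approximation $1-k/p \approx (1-1/p)^k$ becomes inaccurate.
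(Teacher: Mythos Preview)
Your overall architecture matches the paper's: Bonferroni truncation at odd $K\asymp y/\log x$, application of the averaged hypothesis to produce $\int B_K(t)\,dt$, probabilistic reinterpretation of $B_K(t)$ via the random sieve count, and a lower bound for $\mathbb{P}(S_{z(t)}=0)$ by conditioning on an extremal choice of $(a_p)_{p\le w_1}$.  However, both of the points you flag as ``technical obstacles'' are genuine gaps, and your sketched resolutions do not work as written.

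\textbf{The correction term.}  You propose to control $\E\bigl[\tbinom{M(t)-1}{K}\mathbf{1}_{M(t)\ge K+1}\bigr]$ by a Chernoff-type concentration bound for $M(t)$.  But $M(t)$ is not a sum of independent indicators, and a martingale/Azuma argument over the primes $p\le z(t)$ gives increments of size up to $y/p$, which is far too coarse for primes $p=O(\log x)$.  Moreover, even if you had $\mathbb{P}(M(t)>K)\le x^{-A}$, you would still need to absorb the factor $\binom{y}{K}\asymp(\log x)^K=x^{\Theta(\log_2 x)}$, which forces $A\gg\log_2 x$.  The paper sidesteps this entirely: using the deterministic sieve bound $S_{y^4}\le Cy/\log y$ together with the exact identity $\E_{w,z}\binom{S_z}{K+1}=\Theta_{w,z}^{K+1}\binom{S_w}{K+1}(1+O(y^{-2}))$ (Corollary~\ref{cor:binomSk}) gives directly
\[
\E_{z(t)}\binom{S_{z(t)}}{K+1}\;\ll\;\Bigl(\Theta_{y^4,z}\cdot\frac{eCy}{(K+1)\log y}\Bigr)^{K+1}\;\ll\; e^{-K}\;\ll\;x^{-10c},
\]
no concentration needed.

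\textbf{The lower bound on $P(y,t)$.}  Your displayed identity
\[
\mathbb{P}(\text{cover})=\sum_{k=0}^{W_y}(-1)^k\binom{W_y}{k}\prod_{\zeta<p\le z(t)}\Bigl(1-\frac{k}{p}\Bigr)
\]
is false: inclusion--exclusion gives $\prod_p(1-|\cT\bmod p|/p)$ summed over subsets $\cT$, and for $\zeta<p\le y$ the value $|\cT\bmod p|$ depends on $\cT$, not just on $|\cT|=k$.  More fatally, since $W_y\gg y\log_2 y/\log^2 y\gg\zeta=(y/\log y)^{1/2}$, the factor $1-k/p$ is \emph{negative} for $k$ near $W_y$ and $p$ near $\zeta$, so the proposed asymptotic $\prod(1-k/p)\sim(\log\zeta/\log z(t))^k$ is meaningless in the dominant range.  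The paper's remedy is to \emph{not} jump directly from $\zeta=w_1$ to $z(t)$.  Instead, Corollary~\ref{cor:w1w5} (itself built from Lemmas~\ref{lem:w1w2}--\ref{lem:w4w5}, each using a different technique for a different prime range) shows that with probability $1-O(x^{-100})$ one has $S_{w_5}=(\tfrac{1}{16}+o(1))W_y$ with $w_5=y^8$.  Only at this point, where all surviving integers are distinct modulo every remaining prime $p>w_5$, does the clean binomial formula (Lemma~\ref{lem:SzSP}) apply to give $\mathbb{P}_{w_5,z}(S_z=0)=(1-\Theta_{w_5,z})^{S_{w_5}}(1+O(y^{-6}))$, from which the bound $P(y,t)\gg x^{-(1-\eps/2)c}$ follows.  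Your single-step heuristic correctly predicts the exponent, but the passage through the range $\zeta<p\le y^8$ requires real work that your sketch does not supply.
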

 
One could combine Theorem~\ref{thm:UHL} with Theorem~\ref{thm:HLgaps} (taking $\kappa\defeq (\log x)^{c-1+\eps}$ with
fixed $c<1$, say) to obtain results similar to Theorem~\ref{thm:conditional}. However, the conclusion is considerably weaker than that of Theorem~\ref{thm:conditional}, and it does not appear that this approach is going to come close to recovering 
the bounds we obtain using a direct argument.

Below we summarize, in rough form,
the various results and conjectures for the primes $\cP$, the various random models $\cC, \cG, \cR$ for the primes, and for arbitrary sets $\cA$ obeying a Hardy-Littlewood type conjecture:

\bigskip

\begin{tabular}{cll}
\hline
Set & Hardy-Littlewood conjecture? & Asymptotic largest gap up to $x$ \\
\hline
$\cC$ & No (singular series is missing) &
$\mathop{\sim\log^2 x}\limits^{\phantom{.}}$ \\
$\cG$ & Yes (with weak error term) & $g( (\xi \pm o(1)) \log^2 x)$ \\
$\cR$ & Yes (with error $O(x^{1-c})$) & $g( (\xi \pm o(1)) \log^2 x)$ \\
$\cP$ & Yes (conjecturally) & $\sim\xi \log^2 x$ (conjecturally) \\
$\cA$ & Assumed (error $O(x^{1-c})$) & $\gg c \frac{\log^2 x}{\log_2 x}$ \\
$\cA$ & Assumed on average (error $O(x^{1-c})$) &
$\mathop{\gtrsim}\limits_{\phantom{.}} g((c\,\xi - o(1)) \log^2 x)$\\
& \change{for tuples of size up to $(\log x)\log_2 x$} &
\\
\hline
\end{tabular}

\bigskip
Of course, one can combine this table's conclusions with the unconditional bounds in \eqref{g-bounds}, or the conjecture $g(u) \sim u$, to obtain further rigorous or predicted upper and lower bounds for the largest gap.

\subsection{Open Problems}

\begin{enumerate}
\item Improve upon the bounds \eqref{eq:By-bounds}; 
alternatively, give some heuristic reason for why the
upper bound in \eqref{eq:By-bounds} should be closer to the truth.
\item Show that $g(a)\sim g(b)$ whenever $a\sim b$. 
This will clean up the \change{statement of Theorem~\ref{thm:conditional}.}
\item Analyze the distribution of large gaps between 
special elements of $\cR$. For example, what is the largest gap between elements of $\{n: n\in \cR,n+2\in \cR \}$ below $x$?
This should be a good predictor for the maximal gap between pairs of 
twin primes and likely will involve a different extremal sieve problem. 
\end{enumerate}


\subsection{Plan of the paper}
Following further remarks and background inequalities in
Sections \ref{sec:background-remarks} and \ref{sec:prelim}, we prove Theorems \ref{thm:UHL} and \ref{thm:RHB}
in Section \ref{sec:UHL_model} using first and second moment
bounds. 
 Section \ref{sec:gaps-randomly-sieved} and \ref{sec:random-large-primes} contain probability
estimates on $|[0,y] \cap \cS_w|$ for various ranges of $w$.
These are then used to prove Theorem~\ref{thm:conditional} in
Section \ref{sec:large-gaps-from-model} and Theorems
\ref{thm:HLgaps} and \ref{thm:HLgaps-avg} in Section 
\ref{sec:HLA}.
In Section \ref{sec:interval-sieve}, we connect
the interval sieve problem to the problem of ``exceptional zeros,'' made explicit in Theorem~\ref{thm:exceptional_zeros}; this is proved in Section \ref{sec:exceptional_zeros}.


{\Large\section{Background and Further Remarks}
\label{sec:background-remarks}}

The discussion here is not needed for the proofs of the main
theorems and may be omitted on the first reading.

\medskip


\subsection{Remarks on the Hardy-Littlewood conjectures}\label{sec:comments_HL}
%

\change{For any $\cH \subseteq [0,y]$, we have
$\fS(\cH) \le e^{O(|\cH|\log_2 y)}$}
(see Lemma~\ref{lem:SS} below),
and thus when $y \le (\log x)^{O(1)}$, the main terms in 
\eqref{eq:HL-precise} and \eqref{eq:HLA} are smaller than one for $c_1 \frac{\log x}{\log_2 x} \le |\cH|\le \exp\{ (\log x)^{c_2} \}$, 
where $c_1,c_2>0$ are appropriate constants.
Therefore, we cannot have a genuine asymptotic 
when $|\cH| > c_1 \frac{\log x}{\log_2 x}$.

In the case of primes, it may be the case that
\eqref{eq:HL-precise} fails
when $|\cH|> \frac{\log x}{\log_2 x}$ owing to
 potentially large fluctuations in both the size of $\fS(\cH)$
and in the prime counts themselves.
We note that 
Elsholtz \cite{Els} has shown that for any $c>0$,
the left side of \eqref{eq:HL-precise} is bounded by
$$O\left(x \exp\left( - (\tfrac14+o(1)) \frac{\log x \log_3 x}{\log_2 x} \right)\right)$$ 
when $|\cH|\ge c\log x$, where the implied function $o(1)$ depends on $c$. 
On the other hand, there are admissible tuples with $|\cH|\ll \log x$ for which the left side of \eqref{eq:HL-precise} is zero
(see \cite{Els} for a construction of such $\cH$).

Our assumption in Theorem~\ref{thm:HLgaps-avg} is more speculative,
in light of the above remarks, since we need
to deal with tuples $\cH$ satisfying $k=|\cH| > \log x$.
Also, simply considering subsets $\cH$ of the primes in $(y/2,y]$
(which are automatically admissible),
we see that
 there are at least $(\frac{y}{k\log y})^k>(\log x)^{k/2}$
tuples $\cH$ in the summation, and this means that 
when $k>\log x$,
\eqref{eq:HLA*} implies a great deal of cancellation in
the error terms of \eqref{eq:HLA} over tuples $\cH$.

In a few special cases, e.g., $\cH=\{0,2\}$, $\cH=\{0,2,6\}$,
and $\cH=\{0,4,6\}$, there is extensive numerical evidence
(cf.\ \cite[pp.~43--44, 62--64]{HL}, \cite{Nicely-95}, \cite{LeikWald}, 
\cite{Nicely-04}, \cite{Nicely-web}) in support of the
conjecture \eqref{eq:HL-precise} with such a strong error term\footnote{Most of this work appears only on web pages, rather than in
books or journals.}.
 Note that the special case of \eqref{eq:HL-precise}
with $\cH=\{0\}$ is equivalent to the Riemann Hypothesis.
Theorem~\ref{thm:UHL} makes plausible the notion that 
\eqref{eq:HL-precise} may hold uniformly 
for $\cH \subset [0,Y]$ \change{with} $|\cH|\le K$, where $Y,K$ 
are appropriate functions of $x$.

\medskip


\subsection{The cutoff $z(t)$}
In \cite{Polya}, P\'olya suggests using a truncation
$x^{1/e^\gamma}$ to justify the Hardy-Littlewood conjectures.
The observation that the cutoff $\sqrt{x}$ leads to
erroneous prime counts was made by Hardy and Littlewood
\cite[Section 4.3]{HL} and is occasionally referred to as
``the Mertens Paradox'' (see \cite{MontWagon}).
In discussing the probabilistic heuristic for counting the number of primes below $x$, Hardy and Littlewood write (here $\varpi$ denotes a prime) ``One might well replace $\varpi<\sqrt{n}$ by $\varpi < n$, in which case we should obtain a probability half as large. This remark is in itself enough to show the unsatisfactory character of the argument'' and later
``\emph{Probability} is not a notion of pure mathematics, but of philosophy or physics.''


\medskip\subsection{Connection to Jacobsthal's function}
Any improvement of the lower bound in \eqref{eq:By-bounds} 
leads to a corresponding improvement of the known upper bound on Jacobsthal's
function \change{$J(w)$}, which we define to be the largest gap which occurs
in the set of integers that have no prime factor \change{$\le w$.}
Equivalently, \change{$J(w)$} is the largest gap in \change{$\cS_w$.}
Iwaniec~\cite{I71} proved that \change{$J(w)\ll w^2$} using
his linear sieve bounds.
Using Montgomery and Vaughan's explicit version of the Brun-Titchmarsh
inequality \cite{MV-BT}, the cardinality of the set $\cS_w(y)\defeq[0,y]\cap\cS_w$ for $w > (y/\log y)^{1/2}$ can be bounded from below by
\begin{align*}
|\cS_w(y)|&\ge |\cS_{(y/\log y)^{1/2}}(y)| - \sum_{(y/\log y)^{1/2}<p\le w} |\cS_{(y/\log y)^{1/2}}(y) \cap (a_p \bmod p)| \\
&\geq W_y-\sum_{(y/\log y)^{1/2}<p\le w}\frac{2y/p}{\log(2y/p)}. 
\end{align*}
If the right side is positive, it follows that \change{$J(w)<y$.}
Suppose, for example, that $W_y \ge \alpha y/\log y$ for large $y$, where $0<\alpha\le 1$ is fixed. 
\change{ Mertens' estimates then imply that}
\[
\change{J(w) \ll w^{1+e^{-\alpha/2}+o(1)} \qquad (w\to\infty)},
\]
\change{which improves Iwaniec's upper bound.}

We remark that all of the unconditional lower bounds on
$G_\cP(x)$, including the current record \cite{FGKMT}, have utilized the simple inequality
$G_\cP(x)\ge J(y)$, where $y\sim \log x$.

\medskip


\subsection{The interval sieve problem and exceptional zeros}
\label{sec:interval-sieve}

The problem of determining $W_y$
asymptotically is connected with the famous problem
about exceptional zeros of Dirichlet $L$-functions
(also known as Siegel zeros or Landau-Siegel zeros);
see, e.g., \cite[Sections 14, 20, 21, 22]{Dav}
for background on these
and \cite{Iw02} for further discussion.

\begin{definition} 
We say that \emph{exceptional zeros exist} if there 
is an infinite set $\cE\subset \NN$, 
such that for every $q\in \cE$ there is a 
real Dirichlet character $\chi_q$ and a zero $1-\delta_q$
with $L(1-\delta_q,\chi_q)=0$ and $\delta_q=o(1/\log q)$
as $q\to\infty$.
\end{definition}

\begin{theorem}\label{thm:exceptional_zeros}
Suppose that exceptional zeros exist. Then
\[
\liminf_{y\to\infty} \frac{W_y}{y/\log y}=0 \mand
\limsup_{u\to \infty} \frac{g(u)}{u}=\infty.
\]
Hence, we almost surely have
\[
\limsup_{x\to \infty} \frac{G_\cR(x)}{\log^2 x}=\infty
\]
and Conjecture \ref{conj:Gxhx} implies that
\[
\limsup_{x\to \infty} \frac{G_\cP(x)}{\log^2 x}=\infty.
\]
\end{theorem}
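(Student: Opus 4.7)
The main work is to establish the first assertion, $\liminf_{y\to\infty} W_y/(y/\log y) = 0$, from which the remaining three will follow readily via the lower bound in \eqref{eq:By-bounds}, Theorem \ref{thm:conditional}, and Conjecture \ref{conj:Gxhx}. I would first reformulate $W_y$: by the Chinese Remainder Theorem, a choice of residues $(a_p)_{p \le z}$ (with $z := (y/\log y)^{1/2}$) corresponds to a unique residue $c \bmod P(z)$, where $P(z):=\prod_{p\le z}p$, via $c\equiv a_p \pmod p$, and the sifted set $[0,y]\cap\cS_z$ becomes $\{n \in [0,y] : \gcd(n-c,P(z))=1\}$. Consequently
\[
W_y \;=\; \min_{c\in\Z}\,\bigl|\{n \in [c, c+y] : \gcd(n,P(z))=1\}\bigr|,
\]
so the first assertion is equivalent to finding, for infinitely many $y$, a translate $c$ such that $[c,c+y]$ contains unusually few integers coprime to $P(z)$.

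To exhibit such a $c$, I would use the exceptional character $\chi_q$. The hypothesis $\delta_q \log q \to 0$, combined with the standard bounds $L(1,\chi_q)=\delta_q L'(1,\chi_q)+O(\delta_q^2)$ and $L'(1,\chi_q)\ll \log q$, yields $L(1,\chi_q)=o(1)$. I would take $y=y(q)$ to be a moderate power of $q$ and choose $c$ so that the interval $[c,c+y]$ is biased toward residue classes modulo $q$ on which $\chi_q=+1$. A Selberg sieve twisted by the multiplicative weight $(1+\chi_q(n))/2$ then bounds the number of rough integers in $[c,c+y]$; its main term picks up a factor of $L(1,\chi_q)$ and is therefore $o(y/\log y)$, while the bias of $c$ toward $\chi_q=+1$ guarantees that this bound controls the untwisted count as well.

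Given the first assertion, the remaining three are routine. Choose $y_k\to\infty$ with $\epsilon_k := W_{y_k}\log y_k/y_k \to 0$; the lower bound in \eqref{eq:By-bounds} forces $u_k:=W_{y_k}\log y_k\to\infty$, and $g(u_k)\ge y_k$ gives $g(u_k)/u_k\ge 1/\epsilon_k\to\infty$, which is the second assertion. For the third, Theorem \ref{thm:conditional} asserts that almost surely $G_\cR(x)\ge g((\xi-o(1))\log^2 x)$; choosing $x_k$ with $\log^2 x_k\sim u_k/\xi$ then yields $G_\cR(x_k)/\log^2 x_k\gtrsim \xi/\epsilon_k\to\infty$, and the fourth assertion follows identically using Conjecture \ref{conj:Gxhx}. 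The principal obstacle is the twisted-sieve construction: the parameters $q$, $y$, $c$, $z$ must be arranged so that the Selberg-sieve remainder and the discrepancy between the twisted and untwisted counts are both $o(y/\log y)$, so that the $L(1,\chi_q)=o(1)$ saving is genuinely exploited rather than absorbed into error.
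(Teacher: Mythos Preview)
Your reduction of the last three assertions to the first is correct. However, the twisted-sieve mechanism you propose for the first assertion has a gap. The weight $(1+\chi_q(n))/2$ does not produce a factor of $L(1,\chi_q)$ in the Selberg main term: for each prime $p\nmid q$ the local density of this weight in any fixed residue class modulo $p$ is asymptotically $1/p$ (the character sum over that class averages to zero), so the sieve is still one-dimensional and its main term is $\asymp y/\log y$, with no saving. The weight that does yield $L(1,\chi_q)$ upon summation is the divisor-type weight $\sum_{d\mid n}\chi_q(d)$, but that is a different object and does not majorise the indicator of rough $n$. There is a second problem: since $(1+\chi_q(n))/2\le 1$, the twisted count is a \emph{lower} bound for the untwisted one; reversing this would require $\chi_q(n)=+1$ for every rough $n$ in the interval, which cannot be arranged once $y>q$ because $[c,c+y]$ then meets every residue class modulo $q$.

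The paper's route avoids this sieve subtlety entirely. It exploits the zero through Gallagher's prime number theorem, which gives $\pi(X;q,1)\ll \delta_q X/\phi(q)$ in a suitable range of $X$. One then chooses the residues $a_p$ (for $p\nmid q$) by $qa_p+1\equiv 0\pmod p$, so that $n\in[0,y]$ survives the sieve only if $qn+1$ has no small prime factor coprime to $q$; the survivors thus correspond to rough integers $\equiv 1\pmod q$ in $[1,qy+1]$, each of which is either prime or a product of two primes larger than $(y/\log y)^{1/2}$. The primes are controlled by Gallagher's bound and the almost-primes by Brun--Titchmarsh, and with $y=\exp\bigl((\log q/\delta_q)^{1/2}\bigr)$ both contributions are $O(\delta_q y)=o(y/\log y)$.
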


Our proof of Theorem~\ref{thm:exceptional_zeros},
given in Section \ref{sec:exceptional_zeros},
is quantitative, exhibiting an upper bound for $W_y$
in terms of the decay of $\delta_q$. Siegel's theorem
\cite[Sec. 21]{Dav} implies that $\frac{\log 1/\delta_q}{\log q} \to 0$,
but we cannot say anything about the rate at which this occurs 
(i.e., the bound is \emph{ineffective}).
If the rate of decay to zero is extremely slow, then our proof
shows that, infinitely often, $W_y=f(y) \frac{y\log_2 y}{\log y}$, with $f(y)\to \infty$ extremely slowly. Consequently,
$G_\cR(x)$ is infinitely often close to the upper bound
in \eqref{GBx_extrema}.

The related quantity
\[
\widetilde{W}_y\defeq\max | S_{\sqrt{y}} \cap [0,y] |,
\]
is known by the theory of upper bound sieves to satisfy 
$\widetilde{W}_y \le \frac{2y}{\log y}$
(see, e.g., \cite{MV}),
and it is well known that an improvement 
of the constant two
 would imply that exceptional zeros
do not exist; see, e.g., Selberg's paper \cite{Selberg72}. Theorem~\ref{thm:exceptional_zeros} (in the contrapositive) similarly asserts that an improvement of the constant zero in the trivial lower bound
$W_y \geq 0\cdot\frac{y}{\log y}$ implies that exceptional zeroes do not exist. 
\change{Extending our ideas and those of Selberg, Granville \cite{Granville-intsieve}
has recently shown that if exceptional zeros exist, then for any real $r>1$,
\begin{align*}
\liminf_{y\to \infty} \frac{\min_{(a_p)} |[0,y] \cap \cS_{y^{1/r}}|}{e^{-\gamma} y/\log y^{1/r}} &=f(r), \\
\limsup_{y\to \infty} \frac{\max_{(a_p)} |[0,y] \cap \cS_{y^{1/r}}|}{e^{-\gamma} y/\log y^{1/r}} &=F(r),
\end{align*}
where $f,F$ are the lower and upper linear sieve functions.
In particular, $f(r)=0$ for $r\le 2$ and $f(r)>0$ for $r>2$.
}

It is widely believed that exceptional zeros do not exist,
and this is a famous unsolved problem.
Theorem~\ref{thm:exceptional_zeros} indicates that to
fully understand $W_y$, it is necessary to
solve this problem. Iwaniec's lectures
\cite{Iw02} give a nice overview of
the problem of exceptional zeros, attempts to prove that
they do not exist, and 
various consequences of their existence.
In \change{the paper} \cite{F19}, the second author shows that
if there is a sequence of moduli $q$ with $\delta_q\ll (\log q)^{-2}$,
then one can deduce larger lower bounds for 
\change{$J(w)$} and $G_\cP(x)$ than are currently known unconditionally.

\medskip


\subsection{Primes in longer intervals}
\label{sec:longer_intervals}

With probability one, the Cram\'er model $\cC$ also satisfies
\be\label{eq:Selberg}
\pi_{\cC}(x+y)-\pi_{\cC}(x)\sim\frac{y}{\log x}
\ee
as long as $x\to\infty$, \change{$y\le x$}, and $y/\log^2 x\to\infty$.
However, Maier~\cite{Maier} has shown that 
the analogous statement for primes is false, namely
that for any fixed $A>1$
one has
\be\label{Maier}
\liminf\limits_{x\to\infty}
\frac{\pi(x+(\log x)^A)-\pi(x)}{(\log x)^{A-1}}<1
\quad\text{and}\quad\limsup\limits_{x\to\infty}
\frac{\pi(x+(\log x)^A)-\pi(x)}{(\log x)^{A-1}}>1.
\ee
The disparity between \eqref{eq:Selberg} and \eqref{Maier}
again stems from 
 the uniform distribution of $\cC$ 
in residue classes modulo primes.
 Both models $\cG$ and $\cR$ satisfy the analogs of \eqref{Maier}; we omit the proofs.
Moreover,
the ideas behind Theorem~\ref{thm:conditional} can be used to
sharpen \eqref{Maier}, by replacing the right sides of
the inequalities by quantities defined in terms of 
 the extremal behavior of 
$|[0,y] \cap S_{y^{1/u}}|$ for fixed $u>1$; we refer the reader to 
\cite[Exercise 30.1]{Kou-book} for details.
The authors thank Dimitris Koukoulopoulos for this observation.

By contrast, on the Riemann Hypothesis,
Selberg~\cite{Selberg} showed that 
\[\pi(x+y)-\pi(x)\sim\frac{y}{\log x}\]
holds for \emph{almost all} $x$ provided that \change{$y=y(x)\le x$} satisfies $y/\log^2 x \to \infty$ as $x \to \infty$.

\change{On a related note, Granville and Lumley \cite{GL} have 
developed heuristics and conjectures
concerning the \emph{maximum} number of primes $\le x$ lying in intervals of length 
$L$, where $L$ varies between $\log x$ and $\log^2 x$}.


\subsection{Remarks on the singular series and prime gaps}

If $y$ is small compared to 
$x$, the difference $\pi_\cC(x+y)-\pi_\cC(x)$ is
a random variable with (essentially) a binomial distribution.
Letting $y\to\infty$ with $y/\log x$ fixed, the result is a
Poisson distribution: for any real $\lambda>0$ and
any integer $k\ge 0$, we have
\be\label{eq:Gallagher-says}
\big|\big\{m\le x:\pi_\cC(m+\lambda\log m)-\pi_\cC(m)=k\big\}\big|
\sim e^{-\lambda}\frac{\lambda^k}{k!}x\qquad(x\to\infty)
\ee
with probability one. In particular, using $\cC$ as
a model for the primes $\cP$, this leads to the conjecture
that
\be\label{eq:cramerexp}
\lim_{x\to\infty}
\pi(x)^{-1}\big|\{p_n\le x:p_{n+1}-p_n\ge \lambda\log p_n\}\big|
=e^{-\lambda}\qquad(\lambda>0).
\ee
 Gallagher~\cite{GAL} 
showed that if the Hardy-Littlewood conjectures
\eqref{eq:HL-asym} are true uniformly for \change{$\cH\subset [0,\log^2 x]$}
with fixed cardinality $|\cH|$,
then \eqref{eq:cramerexp} follows. His analysis 
relies on the relation
\be\label{eq:Gallagher}
\sum_{\substack{\cH\subset[0,y]\\|\cH|=k}}\fS(\cH)
\sim\binom{y}k\qquad(y\to\infty),
\ee
which asserts that the singular series has an average value of one.
Sharper versions of \eqref{eq:Gallagher} exist
(see, e.g., Montgomery and Soundararajan \cite{MS});
such results, however, are uniform only in a range $|\cH| \ll \log_2 y$ 
or so, far too restrictive for our use.
 Reinterpreting the sum on the left side of
 \eqref{eq:Gallagher} probabilistically, as we have done above, 
 allows us to adequately deal with a much larger range of
 sizes $|\cH|$.
In particular, it is possible to deduce from a uniform version
of \eqref{eq:HL-precise} a uniform version of 
\eqref{eq:cramerexp}, although we have not done so in this paper.

We take this occasion to mention a recent unconditional theorem of 
Mastrostefano \cite[Theorem 1.1]{Mastrostefano},
which is related to
\eqref{eq:Gallagher}, and which states that
for any integer $m\ge 0$ there is an $\eps=\eps(m)>0$
so that whenever $0<\lambda<\eps$, we have
\[
|\{ n\le x : |[n,n+\lambda \log n] \cap \cP|=m \}| \gg_{\lambda,\eps} x.
\]
Establishing the Poisson distribution \eqref{eq:Gallagher-says}
unconditionally, even for some fixed $\lambda$, seems very difficult.

 
\subsection{The maximal gap in Granville's model}

The claimed bounds in Theorem~\ref{thm:conditional} are also satisfied
by Granville's random set $\cG$, i.e., one has
\[
g((\xi-o(1))\log^2 x) \le G_{\cG}(x) \le g((\xi+o(1))\log^2 x).
\]
The proof is very short, and we sketch it here
as a prelude to the proof of Theorem~\ref{thm:conditional}.
Consider the elements of $\cG$ in $(x,2x]$ for $x$ a power of two. 
 In accordance with \eqref{g-bounds}, let $y$ 
satisfy $\log^2 x \le y=o(\log^2 x \log_2 x)$ and
put $A\defeq (y/\log y)^{1/2}$, so that $A=o(\log x)$.
Let $\theta\defeq \prod_{p\le A} (1-1/p)^{-1} \sim (e^\gamma/2)\log y$ and $Q\defeq \prod_{p\le A}p$.
 For simplicity, we suppose that each $n\in (x,2x]$
with $(n,Q)=1$ is chosen for inclusion in $\cG$ with probability
\change{$\theta/\log x$}; this modification 
has a negligible effect on the
size of the largest gap. Fix $\eps>0$ arbitrarily small.
Let $X_m$ denote the event $(m,m+y] \cap \cG=\varnothing$.

\change{Let $D_m$ denote the number of integers in $(m,m+y]$, all of whose
prime factors are larger than $A$.}
If we take $y\defeq g((\xi+\eps)\log^2 x)$, then
\dalign{
\E\big|\{x<m\le 2x: X_m \}\big| &=\sum_{x<m\le 2x} (1-\theta/\log x)^{\change{D_m}} \\
&\le x (1-\theta/\log x)^{W_y} \le x e^{-\theta W_y/\log x}
\\ &\ll x^{-\eps/2}
}
by our assumption that $W_y\log y \sim (\xi+\eps)\log^2 x$.
Summing on $x$ and applying Borel-Cantelli, we see that
almost surely, only finitely many $X_m$ occur.

For the lower bound, we take $y\defeq g((\xi-\eps)\log^2 x)$ and
restrict to special values of $m$,
namely $m\equiv b \bmod Q$, where $b$ is chosen so that
\[
\change{D_b=W_y.}
\]
Let $\cM\defeq \{x<m\le 2x: m\equiv b\bmod Q\}$
and let $N$ be the number of $m\in \cM$ for which $X_m$ occurs.
By the above argument, we see that
\[
\E N=|\cM| (1-\theta/\log x)^{W_y}.
\]
By assumption, $|\cM|=x^{1-o(1)}$ and hence
the right side is $> x^{\eps/2}$ for large $x$.
Similarly,
\dalign{
\E N^2 &=|\cM| (1-\theta/\log x)^{W_y}+(|\cM|^2-|\cM|) (1-\theta/\log x)^{2W_y} \\
&=(\E N)^2+O(\E N).
}
 By Chebyshev's inequality, $\PR(N < \frac12 \E N) \ll
 1/\E N \ll x^{-\eps/2}$.
Considering all $x$ and using Borel-Cantelli,
we conclude that almost surely every sufficiently large
dyadic $(x,2x]$ contains \change{an} $m$ for which $X_m$ occurs.

\change{We remark that our lower bound argument above works as well for the
Cram\'er model, showing \eqref{eq:Cramer-gap}. We take $A=Q=\theta=b=1$,
and the details are simpler.}

\bigskip\textbf{Acknowledgements.}
The authors thank Andrew Granville, Ben Green,
 D.~R.~Heath-Brown, Henryk Iwaniec, Dimitris Koukoulopoulos, James Maynard, Carl Pomerance
and Joni Ter\"av\"ainen for useful discussions,
especially concerning the interval sieve problem.


{\Large\section{Preliminaries}
\label{sec:prelim}}

\medskip


\subsection{Notation}\label{notation-sec}

The indicator function of any set $\cT$ is denoted $\ind{\cT}(n)$. We select residue classes $a_p \bmod p$ uniformly and independently at random for each prime $p$, and then for any set of primes $\cQ$ we denote by $\cA_\cQ$
the ordered tuple $(a_p:p\in\cQ)$; often we condition our
probabilities on $\cA_\cQ$ for a fixed choice of $\cQ$.

Probability, expectation, and variance are denoted by $\P$, $\E$, and $\V$ respectively.
We use $\P_\cQ$ and $\E_\cQ$ to denote
the probability and expectation, respectively, with respect 
to random $\cA_\cQ$. When $\cQ$ is the
set of primes in $(c,d]$, we write 
$\cA_{c,d}$, $\P_{c,d}$ and $\E_{c,d}$; if $\cQ$
is the set of primes $\le c$, we write $\cA_c$, $\P_c$ and $\E_c$.
In particular, $\P_{c,d}$ refers to the probability
over random $\cA_{c,d}$, often with conditioning on $\cA_{c}$.

Throughout the paper, any implied constants in symbols $O$, $\ll$
and $\gg$ are \emph{absolute} (independent of any parameter) unless otherwise indicated.
The notations $F\ll G$, $G\gg F$ and
$F=O(G)$ are all equivalent to the statement that the inequality
$|F|\le c|G|$ holds with some constant $c>0$. We write $F\asymp G$
to indicate that $F\ll G$ and $G\ll F$ both hold.
The notation $o(1)$ is used to indicate a function that tends
to zero as $x\to\infty$; in expressions like $1-o(1)$, the function
is assumed to be positive.
We write \change{$F \sim G$ when $F=(1+o(1)) G$ as $x\to\infty$.}

\change{For a set $\cH$ of integers, we denote
$\cH-\cH \defeq \{h-h': h,h'\in \cH\}$,
and for any integer $m$, $\cH+m \defeq \{ h+m:h\in \cH\}$.}
\medskip


\subsection{Various inequalities}

We collect here some standard inequalities from sieve theory and probability that are used in the rest of the paper.

\begin{lemma}[Upper bound sieve,{\cite[Theorem 3.8]{MV}}]\label{lem:sieve-upper}
For $1\le w\le p\le y$, $p$~prime, $b\in\Z/p\Z$, and an
arbitrary interval $\cI$ of length $y$, we have uniformly
\[
\big|\{n\in\cI: n\equiv b\bmod p,\big(n,\prod_{q\le w} q\big)=1\}\big|
\ll \frac{y/p}{1+\min\{\log w,\log(y/p)\}}.
\]
\end{lemma}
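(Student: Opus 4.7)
The plan is to reduce the lemma to a one-dimensional sifting problem on a shorter interval and then apply a standard upper bound sieve. First I parametrize the residue class by writing $n = b + pm$, so that $n \in \cI$ places $m$ in an interval $J$ of length $M \defeq y/p + O(1)$. The condition $(n, \prod_{q \le w} q) = 1$ translates, for each prime $q \le w$ distinct from $p$, into the exclusion of a single residue class of $m$ modulo $q$ (namely $m \equiv -b p^{-1} \pmod q$ if $q \nmid b$, or $m \equiv 0 \pmod q$ if $q \mid b$). The potential contribution of $q = p$ (which arises only when $w = p$) either renders the set empty or is automatic. Thus the count in the lemma is bounded by the number of $m \in J$ that miss one prescribed residue class modulo every sifting prime.

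Second, I apply a one-dimensional upper bound sieve at level $z \defeq \min\{w, M^{1/2}\}$. If $z < 2$ the trivial bound $|J| \ll M$ already delivers the claim, so assume $z \ge 2$. The Selberg sieve (or equivalently Brun's sieve in sifting dimension one) yields
\[
\ll \frac{M}{L(z,w)} + z^2, \qquad L(z,w) \defeq \sum_{\substack{d \le z \\ q \mid d \Rightarrow q \le w}} \frac{\mu^2(d)}{\phi(d)}.
\]
By Mertens' theorem $L(z,w) \gg \log z$ (which uses $z \le w$ to ensure every prime factor of $d \le z$ is among the sifting primes), while $z \le M^{1/2}$ forces $z^2 \le M$, absorbing the error into the main term. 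The overall bound collapses to $\ll M/\log z \ll M/\min\{\log w, \log M\}$, matching the target; the ``$1+$'' in the denominator of the lemma accommodates the transition to the trivial regime where $w$ or $M$ is $O(1)$.

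The only mild subtlety is calibrating the sieve level: one simultaneously needs $z \le w$ so that only admissible primes enter the Selberg sum, and $z \le M^{1/2}$ so that the remainder $O(z^2)$ is dominated by the main term; taking the minimum of the two bounds is thereby forced. There is no deeper obstacle here, as this is a Brun--Titchmarsh-type inequality for a one-class, one-dimensional sieve, and the detailed argument can be quoted from Montgomery--Vaughan, Theorem 3.8.
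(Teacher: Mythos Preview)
Your argument is correct: the reduction $n=b+pm$ converts the problem to sifting an interval of length $M\asymp y/p$ by one residue class modulo each prime $q\le w$, $q\neq p$, and then the Selberg upper bound sieve with level $z=\min\{w,M^{1/2}\}$ gives $\ll M/\log z + z^2 \ll M/\min\{\log w,\log M\}$, which is the claim.

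As for comparison with the paper: there is nothing to compare. The paper does not supply a proof of this lemma; it is simply quoted from Montgomery--Vaughan \cite[Theorem~3.8]{MV} as a standard preliminary. Your write-up is essentially a sketch of how that cited result is proved (and applied to this particular arithmetic progression setting), so you are not taking a different route so much as unpacking the citation. One tiny cosmetic point: when $z\le w$ every squarefree $d\le z$ automatically has all prime factors $\le w$, so the side condition in your definition of $L(z,w)$ is vacuous apart from the possible exclusion of $p$ itself, which, since $p\ge w\ge z$, does not affect the lower bound $L(z,w)\gg\log z$.
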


\begin{lemma}[Azuma's inequality \cite{Azuma}]\label{lem:Azuma-ineq}
Suppose that $X_0,\ldots,X_n$ is a martingale with $|X_{j+1}-X_j|\le c_j$
for each $j$. Then
\[
\P\( |X_n-X_0| \ge t \) \le 2 \exp \left\{
- \frac{t^2}{2(c_0^2+\cdots+c_{n-1}^2)}\right\}
\qquad(t>0).
\]
\end{lemma}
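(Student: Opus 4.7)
The plan is to apply the standard exponential-moment (Chernoff) method, reducing the tail bound to an estimate on the moment generating function of the martingale differences $D_j \defeq X_{j+1} - X_j$, which by the martingale property and the hypothesis satisfy $\E[D_j \mid X_0,\ldots,X_j] = 0$ and $|D_j| \le c_j$. Since $(-X_j)$ is also a martingale with the same increment bounds, I would first reduce to the one-sided estimate $\P(X_n - X_0 \ge t) \le \exp\{-t^2/(2(c_0^2+\cdots+c_{n-1}^2))\}$, and then recover the factor of $2$ in the statement by applying this to $X$ and $-X$ separately and taking a union bound.

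For any $\lambda > 0$, Markov's inequality applied to the monotone function $u \mapsto e^{\lambda u}$ gives
\[
\P(X_n - X_0 \ge t) \le e^{-\lambda t}\, \E\Bigl[\exp\Bigl\{\lambda \sum_{j=0}^{n-1} D_j\Bigr\}\Bigr].
\]
The key technical input I would use is Hoeffding's lemma: if $D$ has mean zero and $|D| \le c$ a.s., then $\E[e^{\lambda D}] \le e^{\lambda^2 c^2 / 2}$. This is the one genuinely nontrivial step, and is established by writing $D$ as the convex combination $\tfrac{c-D}{2c}(-c) + \tfrac{c+D}{2c}(c)$, applying convexity of $x\mapsto e^{\lambda x}$ to dominate $e^{\lambda D}$ by the corresponding combination of $e^{\pm\lambda c}$, taking expectations so that the $\E D$ term vanishes, and finally bounding the resulting $\cosh(\lambda c)$ by $e^{\lambda^2 c^2/2}$ via power-series comparison.

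Applying Hoeffding's lemma conditionally on the filtration $\mathcal{F}_j \defeq \sigma(X_0,\ldots,X_j)$ yields $\E[e^{\lambda D_j}\mid \mathcal{F}_j] \le e^{\lambda^2 c_j^2 / 2}$ for each $j$. I would then peel off one factor at a time using the tower property, observing that $D_0,\ldots,D_{j-1}$ are $\mathcal{F}_j$-measurable:
\[
\E\Bigl[\exp\Bigl\{\lambda\sum_{j=0}^{n-1} D_j\Bigr\}\Bigr] = \E\Bigl[\exp\Bigl\{\lambda\sum_{j=0}^{n-2} D_j\Bigr\}\,\E\bigl[e^{\lambda D_{n-1}}\,\big|\,\mathcal{F}_{n-1}\bigr]\Bigr] \le e^{\lambda^2 c_{n-1}^2/2}\,\E\Bigl[\exp\Bigl\{\lambda\sum_{j=0}^{n-2} D_j\Bigr\}\Bigr].
\]
Iterating down to $j=0$ gives $\E[e^{\lambda(X_n-X_0)}] \le \exp\{\tfrac{1}{2}\lambda^2 (c_0^2+\cdots+c_{n-1}^2)\}$. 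Finally I would optimize in $\lambda$, taking $\lambda = t/(c_0^2+\cdots+c_{n-1}^2)$, to obtain the one-sided bound $\exp\{-t^2/(2\sum_j c_j^2)\}$; combined with the initial symmetry reduction this yields the statement. The only real obstacle is Hoeffding's lemma itself; everything else is a mechanical combination of Markov's inequality, convexity, and iterated conditioning.
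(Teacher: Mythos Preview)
Your proof is correct and is the standard argument for Azuma's inequality. The paper itself does not prove this lemma at all: it is stated with a citation to Azuma's original paper \cite{Azuma} and used as a black box, so there is nothing to compare against beyond noting that what you have written is precisely the classical Chernoff--Hoeffding derivation one finds in that reference or in any probability textbook.
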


\begin{lemma}[Bennett's inequality\cite{Bennett}]\label{lem:Bennett-ineq}
Suppose that $X_1,\ldots,X_n$ are independent random variables
such that for each $j$, $\E X_j=0$, and $|X_j|\le M$ holds
with probability one. Then
\[
\P\bigg(\bigg|\sum_{1\le j\le n} X_j\bigg| \ge t\bigg)
\le 2\exp\bigg\{- \frac{\sigma^2}{M^2}\,\sL\bigg(\frac{Mt}{\sigma^2}\bigg)
\bigg\}\qquad(t>0),
\]
where $\sigma^2\defeq\sum_j \V X_j$, and
\[
\sL(u)\defeq\int_1^{1+u}\log t\,dt=(1+u)\log(1+u)-u.
\]
\end{lemma}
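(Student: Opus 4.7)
The plan is to use the classical Chernoff/Cram\'er exponential moment method followed by a tight optimization in the parameter. First I would fix $\lambda>0$ and apply Markov's inequality together with the independence of $X_1,\ldots,X_n$ to obtain
\[
\P\Big(\sum_{j} X_j \ge t\Big) \le e^{-\lambda t}\,\prod_{j} \E\bigl[e^{\lambda X_j}\bigr],
\]
so everything reduces to controlling one moment generating function at a time and then choosing $\lambda$ optimally.

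The first key step is the one-variable bound. Using $\E X_j = 0$, the almost-sure bound $|X_j|\le M$, and the pointwise estimate $|X_j|^k \le M^{k-2} X_j^2$ for $k\ge 2$, a term-by-term expansion of the Taylor series for $e^{\lambda X_j}$ gives
\[
\E\bigl[e^{\lambda X_j}\bigr] \le 1 + \sum_{k\ge 2}\frac{\lambda^k M^{k-2}}{k!}\,\V X_j = 1 + \frac{\V X_j}{M^2}\bigl(e^{\lambda M}-1-\lambda M\bigr),
\]
which I would upgrade via $1+x\le e^x$ to an exponential bound. Multiplying over $j$ collapses the variances into $\sigma^2$ and produces
\[
\P\Big(\sum_{j} X_j \ge t\Big) \le \exp\!\bigg(-\lambda t + \frac{\sigma^2}{M^2}\bigl(e^{\lambda M}-1-\lambda M\bigr)\bigg).
\]

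The second key step is the Legendre-type optimization. Differentiating the exponent in $\lambda$ and setting it to zero yields the optimal choice $\lambda^* = \tfrac{1}{M}\log(1+Mt/\sigma^2)$. Substituting with the change of variable $u\defeq Mt/\sigma^2$, the exponent simplifies, after a brief algebraic rearrangement, to $-\tfrac{\sigma^2}{M^2}\bigl((1+u)\log(1+u)-u\bigr)$, which by the identity $(1+u)\log(1+u)-u=\int_1^{1+u}\log s\,ds$ is precisely $-\tfrac{\sigma^2}{M^2}\sL(u)$. This delivers the one-sided tail bound on $\sum_j X_j \ge t$.

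Finally, to obtain the two-sided estimate with the factor of $2$ in the statement, I would apply the identical argument to $-X_1,\ldots,-X_n$, which have mean zero, the same variances, and also satisfy the almost-sure bound $M$, then union-bound the two tails. The one genuinely delicate point is verifying the moment generating function inequality cleanly for arbitrary zero-mean bounded random variables, but this is standard; the rest is a tidy Legendre-transform computation, and I do not expect a real obstacle to emerge.
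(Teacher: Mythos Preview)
Your argument is correct and is precisely the standard proof of Bennett's inequality via the Chernoff moment-generating-function method. Note, however, that the paper does not actually supply a proof of this lemma: it is stated in the preliminaries with a citation to Bennett~\cite{Bennett} and then used as a black box (in the proof of Lemma~\ref{lem:w2w3}), so there is no in-paper argument to compare against. Your write-up would serve perfectly well as the omitted proof.
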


\begin{lemma}\label{lem:SS}
For any $\cH \subset [0,y]$ with $|\cH|=k$, we have
\be\label{SSz}
\fS_z(\cH)=\fS(\cH) \bigg(1+O\bigg( \frac{k^2}{z} \bigg) \bigg) \qquad \change{(z>\max(y,k^2))}
\ee
and
\be\label{SS-upper}
\change{\fS(\cH) \le e^{O(k\log_2(y))}.}
\ee
\end{lemma}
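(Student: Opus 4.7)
The plan is to prove the two estimates by direct Taylor expansion of the local Euler factors combined with Mertens's theorem.

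\textbf{Part (a): the approximation \eqref{SSz}.} The first observation is that for every prime $p > z$, the elements of $\cH$ are pairwise distinct modulo $p$: since $\cH \subset [0,y]$ and $z > y$, any two distinct $h, h' \in \cH$ satisfy $0 < |h - h'| \le y < p$. Hence $|\cH \bmod p| = k$ for all $p > z$, and
\[
\frac{\fS_z(\cH)}{\fS(\cH)} = \prod_{p > z}\bigl[(1 - k/p)(1-1/p)^{-k}\bigr]^{-1}.
\]
Expanding the logarithm of each factor via the Taylor series of $\log(1-t)$, I would obtain
\[
\log\bigl[(1 - k/p)(1-1/p)^{-k}\bigr] \;=\; -\sum_{j \ge 2}\frac{k^j - k}{j p^j},
\]
which is nonpositive and (by geometric summation) bounded in absolute value by $k^2/p^2$ whenever $p \ge 2k$. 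The claim is vacuous unless $k^2 \le C z$, so I may assume every $p > z$ satisfies $p > 2k$. Summing $\sum_{p > z} 1/p^2 = O(1/z)$ then yields $\bigl|\log(\fS_z/\fS)\bigr| = O(k^2/z)$. Since each tail factor is $\le 1$ we have $\fS_z \ge \fS$, and exponentiating gives $\fS_z = \fS\bigl(1 + O(k^2/z)\bigr)$.

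\textbf{Part (b): the upper bound \eqref{SS-upper}.} Reading the statement as $\fS(\cH) \le \exp\bigl(O(k \log_2(ky))\bigr)$, I would take logarithms and split at $p = y$. For primes $p > y$, the Part (a) computation already shows that each factor with $p > 2k$ contributes $\le 0$ to $\log \fS$; the $O(k/\log k)$ primes (if any) in $(y, 2k]$ can occur only when $k > y/2$, and each contributes $O(1)$, for a total of $O(k) = O(k \log_2(ky))$. For primes $p \le y$, I would apply the crude bounds $\log(1 - |\cH \bmod p|/p) \le 0$ (using $|\cH \bmod p| \ge 1$ for nonempty $\cH$) together with $-k \log(1-1/p) = k/p + O(k/p^2)$. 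Mertens's theorem then gives
\[
\sum_{p \le y}\bigl[k/p + O(k/p^2)\bigr] \;=\; k \log_2 y + O(k),
\]
so combining both ranges yields $\log \fS(\cH) \le k \log_2 y + O(k) = O(k \log_2(ky))$.

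Neither part contains a substantive obstacle: the entire argument is a routine two-step calculation, Taylor expansion followed by Mertens. The only bookkeeping items are ensuring that the Taylor tail is geometrically convergent, which requires $p > 2k$ (automatic in Part (a) under the non-vacuous hypothesis $k^2 \le z$), and absorbing the handful of ``border'' primes in $(y, 2k]$ in Part (b), which is dispatched by the trivial $O(1)$-per-prime estimate above.
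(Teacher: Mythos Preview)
Your proposal is correct. For Part~(a), your argument is exactly the one the paper has in mind: the paper's proof simply says the estimate ``follows from the definition of $\fS(\cH)$ and the fact that for $p>y$, $|\cH\bmod p|=k$,'' and you have written out the routine Taylor-expansion details behind that sentence.

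For Part~(b), your approach differs from the paper's. The paper does not argue at all but cites \cite[(6.16)]{GPY} as a black box. You instead give a short self-contained argument: split the product at $p=y$, use the trivial bound $|\cH\bmod p|\ge 1$ together with Mertens for $p\le y$, and use the Part~(a) observation that tail factors are $\le 1$ for $p>\max(y,2k)$, handling the border primes in $(y,2k]$ crudely. This buys you a fully elementary proof with no external dependence; the paper's citation buys brevity. Your reading of the conclusion as an upper bound is the right one, since for inadmissible $\cH$ one has $\fS(\cH)=0$ and no nontrivial lower bound can hold.
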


\begin{proof}
Estimate \eqref{SSz} follows from the definition of $\fS(\cH)$
and the fact that for $p>y$, $|\cH \bmod p|=k$.
Estimate \eqref{SS-upper} is \change{trivial if $\cH$ is inadmissible, since
then $\fS(\cH)=0$, and otherwise \eqref{SS-upper} is} a special case of \cite[(6.16)]{GPY}.
\end{proof}

\change{
\begin{lemma}\label{lem:VH}
If $\cH \subseteq [0,y]$ is an admissible $k$-tuple and $t\ge 2$ satisfies
$z(t) > y$ and $k\le t^{1/100}$, then
\[
\TrunSS{z(t)}=\frac{\fS(\cH)}{(\log t)^k} \big( 1+O(1/t^{0.55}) \big).
\]
\end{lemma}
\begin{proof}
Let $z\defeq z(t)$.
By \eqref{eq:theta}, $z \gg t^{1/e^{\gamma}} \gg t^{0.561}$.
Using Lemma~\ref{lem:SS} and \eqref{eq:theta}, we have
\begin{align*}
\TrunSS{z(t)} &=\fS_z(\cH) \Theta_z^{k} \\
&=\fS(\cH) \(1+O\pfrac{k^2}{z}\) \( \frac{1}{\log t}+O(t^{-1/e^\gamma}) \)^k.
\end{align*}
The lemma now follows since $k\le t^{1/100}$.
\end{proof}
}

{\Large\section{Uniform Hardy-Littlewood from the model}
\label{sec:UHL_model}}

In this section, we prove Theorems~\ref{thm:UHL} and~\ref{thm:RHB}
using the first and second moment bounds provided by the following
proposition.

\bigskip

\begin{proposition}[First and second moment bounds]\label{prop:variance}
Suppose that \change{$x$ and $y$ are integers with $x\ge 3$ and $\sqrt{x}\le y\le x$, and suppose that $0\le D\le \sqrt{x}$.
Let $\cH\subset [0,D]$ be an admissible tuple} with
$k\defeq|\cH|\le\frac{\log x}{(\log_2 x)^2}$, and put
\[
X_n \defeq \prod_{h\in \cH} \ind{\cR}(n+h)\qquad(n\in\NN).
\] Then
\be\label{eq:1st-mom}
\E\bigg(\sum_{x<n\le x+y} X_n\bigg)=\change{ \fS(\cH) \int_x^{x+y} \frac{dt}{(\log t)^{k}}+O\bigg( \frac{yD}{x}+\frac{y}{x^{0.54}} \bigg).}
\ee
Furthermore,
\be\label{eq:2nd-mom}
\mathbb{V} \bigg( \sum_{x<n\le x+y} X_n \bigg) \ll y \(
\frac{D}{x} \change{+\frac{yD^2}{x^2}}+\TrunSS{z(x)}\change{(k^2+yD/x)}+\TrunSS{z(x)}^2 F \),
\ee
\change{where} 
\[
F\defeq\begin{cases}
(\log x)^{k^2}
&\quad\hbox{if $k\le \frac{(\log x)^{1/2}}{\log_2 x}$},\\
y^{\frac{4\varrho^2-1}{4\varrho^2-\varrho}}
\exp\Big\{O\Big(\frac{\log x\log_3 x}{\log_2 x}\Big)\Big\}
&\quad\hbox{if $\frac{(\log x)^{1/2}}{\log_2 x}
\le k=(\log x)^\varrho\le \frac{\log x}{(\log_2 x)^2}$}.
\end{cases}
\]
\end{proposition}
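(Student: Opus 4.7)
I will establish both bounds by expanding expectations as Euler products over primes, exploiting the joint independence of the random residues $a_p \bmod p$, and controlling the discrepancies via the sieve upper bound (Lemma \ref{lem:sieve-upper}) and the singular-series estimate (Lemma \ref{lem:SS}).

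\emph{First moment.} For each $n \in (x, x+y]$, the event $\{X_n = 1\}$ is the intersection over $h \in \cH$ and primes $p \le z(n+h)$ of the events $\{n + h \not\equiv a_p \bmod p\}$. Since $|h - h'| \le D$ and $z$ varies slowly, $|z(n+h) - z(n)| \ll D/\log x$; hence at most $O(D/(\log x)^2)$ primes lie in the symmetric difference of the sieve sets for different $h \in \cH$, and each such prime contributes an extra constraint of probability $O(k/z(n))$. The cumulative error is $O(D/x)$ per $n$. Conditional on the common primes, joint independence across $p$ gives the probability $\prod_{p \le z(n)}(1 - |\cH \bmod p|/p) = V_\cH(z(n))$; summing over $n$ and converting the sum to an integral yields \eqref{eq:1st-mom}.

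\emph{Second moment.} Decompose
\[ \V\Bigl(\sum_{x<n\le x+y} X_n\Bigr) = \sum_n \V(X_n) + 2 \sum_{0<m\le y}\sum_n \mathrm{Cov}(X_n, X_{n+m}), \]
so that the diagonal contributes $\le \sum_n \E X_n \ll y V_\cH(z(x))$. For the off-diagonal, the same Euler-product reasoning gives $\E(X_n X_{n+m}) \approx V_{\cH_m}(z(x))$ with $\cH_m \defeq \cH \cup (\cH + m)$. Dividing by $V_\cH(z(x))^2$, the normalized covariance becomes $\prod_p R_p(m) - 1$ where, with $N_p \defeq |\cH \bmod p|$ and $\delta_p(m) \defeq |\cH \cap (\cH+m) \bmod p|$,
\[ R_p(m) = \frac{1 - (2N_p - \delta_p(m))/p}{(1 - N_p/p)^2} = 1 + O\bigl(\delta_p(m)/p + (N_p/p)^2\bigr); \]
crucially, $R_p(m) = 1$ unless $p \mid h - h' + m$ for some $h, h' \in \cH$, restricting attention for each $m$ to a sparse set of bad primes.

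\emph{Assembly and main obstacle.} The term $y V_\cH(z(x)) k^2$ comes from the short shifts $m \in (\cH - \cH)\setminus\{0\}$ (of which there are $O(k^2)$), handled via the trivial bound $|\mathrm{Cov}| \le V_\cH(z(x))$. For the remaining $m$, summing $|\prod_p R_p(m) - 1|$ supplies the factor $F$: a direct expansion indexed by subsets of pairs in $\cH \times \cH$, combined with \eqref{SS-upper}, gives $F \ll (\log x)^{k^2}$ when $k \le (\log x)^{1/2}/\log_2 x$. When $k$ is larger, the singular-series growth $e^{O(k \log_2(ky))}$ from Lemma~\ref{lem:SS} becomes prohibitive, and one must instead dissect the primes into a low range (summed via the Brun--Titchmarsh estimate in Lemma~\ref{lem:sieve-upper}) and a high range (controlled by singular-series bounds), then optimize the splitting threshold. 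Extracting the sharp exponent $(4\varrho^2 - 1)/(4\varrho^2 - \varrho)$ for $k = (\log x)^\varrho$ is the principal obstacle, requiring a careful balance of the density of $m$'s having many bad primes in common against the singular-series growth.
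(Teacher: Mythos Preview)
Your first-moment argument and the small-$k$ second-moment sketch are broadly aligned with the paper's approach: the paper too expresses $\E X_n$ as $V_\cH(z(n))$ up to a drift error $O(\psi_n - \psi_{n+D})$, and for pairs $(n_1,n_2)$ passes to the product over primes of $(1 - |(\cH\cup(\cH+m))\bmod p|/p)$ and expands. Your $R_p(m)$ formulation captures the same information.

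The gap is in the large-$k$ regime. You correctly flag the extraction of the exponent $(4\varrho^2-1)/(4\varrho^2-\varrho)$ as the principal obstacle, but your proposed mechanism---splitting primes into low and high ranges and invoking Lemma~\ref{lem:sieve-upper}---is too coarse to yield it. The paper's argument is more structured: it introduces, for each $a \in \cH-\cH$, a multiplicative function $f_a$ supported on squarefree integers with $f_a(p)=\lambda_a(p)/(p-2\nu_p)$, and rewrites the off-diagonal contribution as a sum over tuples $\mathbf{d}=(d_a)_{a\in\cH-\cH}$ with $d_a\mid(m-a)$. The controlling quantity becomes $T = \max_{n_1}\sum_{\mathbf{d}} \prod_a f_a(d_a)$. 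For large $k$ the paper bisects this sum at a size threshold $U$ on $\prod_a d_a$: the small-product piece is handled by a Rankin trick with exponent $\alpha \approx 2\varrho - 1$, while the large-product piece exploits the anchoring of all $d_a$ to a \emph{common} shift $m$ (forcing each prime to appear in at most one $d_a$), and is bounded via a second Rankin trick with exponent $\beta \approx (1-\varrho)/(2\varrho+1)$ together with an auxiliary prime cutoff $w = k^2\log x$. Optimizing $U$ balances the two pieces and produces the stated exponent.

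None of this machinery---the divisor-tuple reformulation, the dual Rankin parameters, or the use of the common-$m$ constraint to control prime multiplicities---is present in your sketch, and I do not see how to reach the claimed bound on $F$ without something of comparable strength. In particular, Lemma~\ref{lem:sieve-upper} plays no role in the paper's second-moment argument.
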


\medskip

Before turning to \change{the proof} of the proposition,
we first indicate how it is used to
prove the two theorems, starting with Theorem~\ref{thm:RHB}.

\medskip

\begin{proof}[Proof of Theorem~\ref{thm:RHB}]
\change{Fix $c>3/2$.}
\change{For any integers $u\ge 2$ and $v\ge 0$, we let
\[
\Delta(u,u+v)\defeq\sum_{u<n\le u+v}\ind{\cR}(n)-
\int_u^{u+v}\frac{dt}{\log t}.
\]
We apply Proposition~\ref{prop:variance} in the case that
$\cH=\{0\}$, $k= 1$ and $D= 0$.
By \eqref{eq:1st-mom}, if $v\ge \sqrt{u}$ then
\be\label{eq:Deltauv}
\E \Delta(u,u+v) \ll \frac{v}{u^{0.54}} \ll u^{0.46}.
\ee
 Inequality \eqref{eq:2nd-mom} implies that
\[
\V\big(\Delta(u,u+v)\big)\ll v \big(\TrunSS{z(u)}+\TrunSS{z(u)}^2\log u \big)\ll \frac{v}{\log u}.
\]
Let $x$ be a large integer.
For integers $h,m$ with $2\sqrt{x} \le 2^m \le x$
and $0\le h\le x/2^m-1$, let $G_{m,h}$ be the event that
\[
\big|\Delta(x+h\cdot 2^m,x+(h+1)2^m)\big|
\le x^{1/2}(\log x)^{c-1}.
\]
For large $x$, \eqref{eq:Deltauv} implies that
\[
\big| \E \Delta(x+h\cdot 2^m,x+(h+1)2^m)\big| \le \frac{x^{1/2} (\log x)^{c-1}}{2}.
\]
Hence, Chebyshev's inequality yields the bound
\[
\P\big( \text{not } G_{h,m} \big)
\ll\frac{2^m}{x(\log x)^{2c-1}}.
\]
Let $F_x$ denote the event that $G_{h,m}$ holds for all such $h,m$.
By a union bound, we see that $\P F_x=1-O((\log x)^{2-2c})$. }
On this event \change{$F_x$},
for any \change{integer $y$ with} $1\le y\le x$, we have
\begin{align*}
|\Delta(x,x+y)| &=\bigg| \sum_{\change{2\sqrt{x}}\le 2^m \le y} \Delta\Big( x+\fl{y/2^{m+1}}2^{m+1}, x+\fl{y/2^m} 2^m \Big) \bigg| \change{+O(\sqrt{x})} \\
&\le \sum_{\change{2\sqrt{x}}\le 2^m \le y} x^{1/2} (\log x)^{c-1} 
\change{+O(\sqrt{x})} \\ &\change{\ll} x^{1/2} (\log x)^c.
\end{align*}

\change{Since $2c-2>1$, the Borel-Cantelli lemma implies that with probability one,
$F_{2^s}$ is true for all large integers $s$. On this event,
$\Delta(2,x) \ll x^{1/2} (\log x)^c$ for all real $x\ge 2$, proving the theorem.}
\end{proof}

\medskip

\begin{proof}[Proof of Theorem~\ref{thm:UHL}]

\change{
Fix $c\in[1/2,1)$ and $\eps>0$. For integers $a\ge 2$, $b\ge 0$ and a tuple $\cH$, define
\[
\Delta(a,a+b;\cH) \defeq \sum_{a<n\le a+b} \prod_{h\in \cH} \ind{\cR}(n+h) -
\fS(\cH) \int_a^{a+b} \frac{dt}{(\log t)^{|\cH|}}.
\]
Let 
\[
\lambda \defeq 1 - \frac{1-c}{8c^2-2c}.
\]
}

\change{Let $u$ be a large integer in terms of $c$ and $\eps$, and
let $F_u$ denote the event that
\[
|\Delta(a,a+b;\cH)| \le u^{\lambda+\eps}
\]
for all integers $a,b$ satisfying $u\le a \le a+b \le 2u$ and all 
admissible tuples $\cH$ satisfying
\be\label{H-range}
|\cH|=k\le 10(\log u)^c, \quad \cH \subset \Big[ 0,
\exp\Big\{ 10(\log u)^{1-c}/\log_2 u \big\} \Big].
\ee
The number of such $\cH$ does not exceed $u^{100/\log_2 u}=u^{o(1)}$ as
$u\to\infty$.
}

\change{We again invoke the moment bounds in Proposition \ref{prop:variance}.
Assume $\cH$ satisfies \eqref{H-range} and that $u\le a\le 2u$ and $\sqrt{a}\le b\le a$.
It follows from \eqref{eq:1st-mom} that
\[
\E \Delta(a,a+b;\cH) \ll \frac{b u^{o(1)}}{a}+\frac{b}{a^{0.54}} \ll u^{0.46},
\]
and inequality \eqref{eq:2nd-mom} implies
\[
\change{\mathbb{V} \Delta(a,a+b;\cH)} \ll b^{1+\frac{4c^2-1}{4c^2-c}+o(1)} a^{o(1)}
\ll b u^{2\lambda-1+o(1)},
\]
where the implied function $o(1)$ is uniform over all such $\cH$, $a$ and $b$.
For integers $h,m$ with $2\sqrt{u} \le 2^m \le u$ and $0\le h \le u/2^m-1$,
let $G_{h,m}$ be the event that for all $\cH$ satisfying \eqref{H-range},
\[
|\Delta(u+h\cdot 2^m, u+(h+1)\cdot 2^m;\cH)| \le u^{\lambda+\eps/2}. 
\]
Again, if $u$ is large enough, the expectation of the left side is
at most $\frac12 u^{\lambda+\eps/2}$, uniformly over all $h,m,\cH$.
By a union bound and Chebyshev's inequality,
\begin{align*}
\PR \big( \cup_{h,m} (\text{not } G_{h,m})\big) &\le \sum_{h,m} \sum_{\cH} 
\P\big(\big|\Delta(u+h\cdot 2^m,u+(h+1)\cdot 2^m;\cH)\big|\ge \tfrac12 u^{\lambda+\eps/2}\big)
\\ &\ll \sum_{h,m} \sum_{\cH} \frac{2^m}{u^{1+\eps+o(1)}}
\ll \frac{1}{u^{\eps/2}}.
\end{align*}
Furthermore, as in the proof of Theorem~\ref{thm:RHB}, we see that if $u$ is
large enough (in terms of $c,\eps$) and if $G_{h,m}$ holds for all $h,m$,
then $F_u$ holds. Therefore, 
\[
\PR F_u=1 -O\big( 1/u^{\eps/2} \big).
\]
By Borel-Cantelli, almost surely $F_{2^s}$ is true for all sufficiently large
integers $s$. }

\change{
Now assume that we are in the event that $F_{2^s}$ holds for all $s\ge s_0$.
Let $x$ be sufficiently large such that $x \ge 2^{3s_0+1}$ and \red{$2^{s_1} < x \le 2^{s_1+1}$}, and let $\cH$ be an admissible tuple
with
\[
k \defeq |\cH| \le (\log x)^c, \qquad \cH \subseteq \Big[ 0, \exp \Big\{
\frac{(\log x)^{1-c}}{\log_2 x} \Big\}\Big].
\]
Note that whenever $x^{1/3} \le u=2^s \le x$ we have \eqref{H-range}.
Thus, using \eqref{SS-upper},
\begin{align*}
\bigg|\sum_{n\le x} \prod_{h\in \cH} \ind{\cR}(n+h)&
-\fS(\cH)\int_{2}^{x} \frac{dt}{\log^{|\cH|} t}\bigg| \le O(x^{1/3+o(1)})+\\
& \quad+\sum_{x^{1/3} < 2^s \le x/2} |\Delta(2^s,2^{s+1};\cH)|+|\Delta(\red{2^{s_1}},x;\cH)|\\
&\ll x^{\lambda+\eps/2},
\end{align*}
as required for Theorem~\ref{thm:UHL}.
}
\end{proof}

\change{
The following lemma is needed in the proof of Proposition \ref{prop:variance}.
When an admissible tuple $\cH$ is fixed, define
\[
\psi_t \defeq \TrunSS{z(t)}.
\]
\begin{lemma}\label{lem:psit}
Let $2\le u \le v \le 3u$, and suppose $\cH$ is an admissible tuple with
$k\defeq |\cH| \ge 1$. Then
\[
\psi_u - \psi_v \ll k\psi_u \( \frac{1}{u^{1/e^\gamma}}+
\frac{v-u}{u\log u} \).
\]
\end{lemma}
\begin{proof}
We begin with the simple bound
\be\label{eq:psiuv}
\begin{split}
\psi_u-\psi_v &=\psi_u\bigg(1-\prod_{z(u)<p\le z(v)}
(1-\nu_p/p)\bigg) \\
&\le \psi_u\sum_{z(u)<p\le z(v)}\frac{\nu(p)}{p}\\
&\le k\psi_u\sum_{z(u)<p\le z(v)}\frac{1}{p}.
\end{split}\ee
By multiple applications of \eqref{eq:theta},
\begin{align*}
\sum_{z(u)<p\le z(v)}\frac{1}{p} &\le \sum_{z(u)<p\le z(v)} -\log(1-1/p)=
\log\bigg( \Theta_{z(u)}/\Theta_{z(v)} \bigg) \\
&=\log \( \frac{\log v}{\log u}\Big(1+O(1/z(u)) \Big) \)\\
&\ll \frac{1}{z(u)}+\log \( 1+\frac{\log(v/u)}{\log u} \) \\
&\ll \frac{1}{z(u)}+\frac{\log(v/u)}{\log u}\\
&\ll \frac{1}{u^{1/e^\gamma}}+\frac{v-u}{u\log u}.
\end{align*}
This completes the proof.
\end{proof}
}

\medskip

\begin{proof}[Proof of Proposition \ref{prop:variance}]
Suppose that \change{$\cH\subset [0,D]$} with 
$k\defeq|\cH| \le \frac{\log x}{(\log_2 x)^2}$. \change{We may assume
that $D$ is an integer.
Write $\nu_p\defeq|\cH\bmod p|$ for every prime $p$.
Since $z(t)$ is increasing and $\psi_u$ is decreasing in $u$,
\[
\psi_{n+D} \le \E X_n \le \psi_n.
\]
Hence,
\[
\E \sum_{x<n\le x+y} X_n=\sum_{x<n\le x+y} \psi_n+O\bigg( \sum_{j=1}^D
\big( \psi_{x+j} - \psi_{x+y+j} \big) \bigg).
\]
By Lemma~\ref{lem:psit} and the bound $\psi_u\ll 1/\log u$, the big-$O$ term is
\[
\ll \frac{kD}{\log x} \bigg( \frac{1}{x^{1/e^\gamma}}+\frac{y}{x\log x} \bigg)
\ll \frac{kDy}{x\log^2 x},
\]
since $y\ge \sqrt{x}$ and $1/e^{\gamma} > 1/2$.
This proves that
\be\label{eq:E1}
\E \sum_{x<n\le x+y} X_n=\sum_{x<n\le x+y} \psi_n+O\pfrac{kDy}{x\log^2 x}.
\ee
Lemma~\ref{lem:VH} implies that for each integer $n\in (x,x+y]$ we have
\[
\psi_n=\frac{\fS(\cH)}{(\log n)^k}\(1+O(1/x^{0.55}) \)
=\fS(\cH) \int_{n-1}^n \frac{dt}{(\log t)^k}+O\pfrac{\fS(\cH)}{x^{0.55}}.
\]
Estimate \eqref{SS-upper} implies that $\fS(\cH) \le x^{o(1)}$ and
this proved the estimate \eqref{eq:1st-mom} of the proposition.
}

For the second moment bound, let $v$ be a parameter in
$[4k,\log x]$ and set $Q\defeq\prod_{p\le v} p$.
Given integers $n_1$ and $n_2$ with $x<n_1 < n_2 \le x+y$, define
$m$ and $b$ by
\[
m\defeq n_2-n_1,\qquad
b\equiv m\bmod Q\quad\text{with}\quad b\in[0,Q).
\]
We consider separately the primes $\le v$ and those $>v$, setting
\[
\psi'_n\defeq\prod_{v<p\le z(n)} \(1-\frac{\nu_p}{p}\), \qquad
\xi_b\defeq\prod_{p\le v}\(1-\frac{|(\cH\cup (\cH+b))\bmod p|}{p}\).
\]
Then
\begin{equation}
\label{EX1X2-1}
\begin{split}
\E X_{n_1}X_{n_2}&\le\prod_{p\le z(n_1)}\(1-\frac{|(\cH\cup(\cH+m))
\bmod p|}{p}\)\prod_{z(n_1)<p\le z(n_2)}\(1-\frac{\nu_p}{p}\)\\
&=\frac{\psi'_{n_2}}{\psi'_{n_1}}\,\xi_b
\prod_{v<p\le z(n_1)}\(1-\frac{|(\cH\cup(\cH+m))\bmod p|}{p}\).
\end{split}
\end{equation}
For technical reasons, we use the trivial bound
$\E X_{n_1}X_{n_2}\le\psi_{n_1}\le\psi_x$
when $m\in\cH-\cH$; the total contribution from such
terms is $\le \psi_xk^2 y$, which is
an acceptable error term for \eqref{eq:2nd-mom}.

Now suppose that $m\not\in\cH-\cH$. For any \change{prime
$p>v$} and integer $a\in(-p/2,p/2)$, let
\[
\lambda_a(p)\defeq |(\cH \cap (\cH+a))\bmod p|.
\]
Then, given $v<p\le z(x+y)$ and $m$ we have
\[
|(\cH\cup (\cH+m))\bmod p|=2\nu_p-\lambda_a(p),
\]
where $a$ is the unique integer such that
\[
a\equiv m\bmod p\quad\text{and}\quad \change{|a|<p/2}.
\]
Clearly, $\lambda_a(p) \le \nu_p \le k$, and 
$\lambda_a(p)=0$ unless $a\in(\cH-\cH)\cap(-p/2,p/2)$.
In addition,
\be\label{sumlam}
\sum_a \lambda_a(p)=\nu_p^2.
\ee
Consequently, for any $p>v$ we have
\[
1- \frac{|(\cH\cup (\cH+m))\bmod p|}{p}=
\bigg(1-\frac{2\nu_p}p\bigg)(1+f_a(p)) 
\]
with
\[
f_a(p)\defeq\frac{\lambda_a(p)}{p-2\nu_p}
\]
We remark that $f_a(p)\in(0,1]$ since $p>v\ge 4k\ge 4\nu_p \change{\ge 4\lambda_a(p)}$.
For a fixed choice of $a\in \cH - \cH$ and \change{fixed $n_1$}, 
extend $f_a$ to a multiplicative
function supported on squarefree integers whose prime factors all
lie in \change{$I(n_1,a)\defeq (\max\{v,2|a|\},z(n_1)]$.}
\change{If an integer $r$ has a prime factor outside the interval $I(n_1,a)$ or
$r$ is not squarefree, we set $f_a(r)\defeq 0$.}
 Then
\begin{align*}
&\prod_{v<p\le z(n_1)}\(1-\frac{|(\cH\cup(\cH+m))\bmod p|}{p}\)\\
&\qquad\qquad=\prod_{v<p\le z(n_1)}\(1-\frac{2\nu_p}p\)
\prod_{a\in\cH-\cH}~\sprod{v<p\le z(n_1)\\p\,\mid\,m-a}(1+f_a(p))\\
&\qquad\qquad=\prod_{v<p\le z(n_1)}\(1-\frac{2\nu_p}p\)
\prod_{a\in\cH-\cH}~\sum_{d_a\,\mid\,(m-a)}f_a(d_a)
\end{align*}
(since $m\not\in \cH-\cH$, we always have $m-a\ne 0$).
Recalling \eqref{EX1X2-1} we obtain that
\be\label{eq:EX1X2-2}
\E X_{n_1} X_{n_2}\le \psi'_{n_1}\psi'_{n_2}\xi_b 
\prod_{v<p\le z(n_1)} \pfrac{p^2-2p\nu_p}{(p-\nu_p)^2} S(n_1,n_2),
\ee
where 
\[
S(n_1,n_2)\defeq\prod_{a\in\cH-\cH}~\sum_{d_a\,\mid\,(m-a)} f_a(d_a).
\]

We now fix $n_1$ and sum over $n_2$. Let
\change{\begin{align*}
\cD(n_1)\defeq\big\{\mathbf{d}&=(d_a)_{a\in\cH-\cH}:
\exists\,m\in [1,y]\setminus(\cH-\cH)\text{~such that~}
\forall\,a,\;~d_a\mid(m-a),\\
&\text{ each } d_a \text{ is squarefree with all of its prime
factors in } I(n_1,a)\big\},
\end{align*}}
i.e., $\cD(n_1)$ is the set of all possible vectors of the
numbers $d_a$. We compute
\begin{align*}
\ssum{n_1<n_2\le x+y\\n_2-n_1\not\in\cH-\cH}\psi'_{n_2}\xi_b\,S(n_1,n_2)
\le\sum_{\mathbf{d}\in\cD(n_1)}\Big(\prod_af_a(d_a)\Big)
\sum_{b\bmod Q}\xi_b\ssum{n_1<n_2\le x+y\\n_2\equiv n_1+b\bmod Q\\
\forall a,\;n_2\equiv n_1+a\bmod{d_a}}\psi'_{n_2},
\end{align*}
\change{where we have dropped the condition $n_2-n_1\not\in \cH-\cH$ on the
right side.}
A crucial observation is that for every $\mathbf{d}\in \cD(n_1)$,
 the components $d_a$ are pairwise coprime. 
\change{Indeed, if $a,a'$ are two distinct elements of $\cH-\cH$
and a prime $p>\max\{v,2|a|,2|a'|\}$ divides both $d_a$ and $d_{a'}$, then 
there is some $m\in [1,y]\setminus (\cH-\cH)$
so that $p\,\mid\,d_a\,\mid\,(m-a)$ and
$p\,\mid\,d_{a'}\,\mid\,(m-a')$. This implies $a\equiv a'\pmod{p}$, a contradiction.}
 Hence, the innermost sum is a sum over a single residue class modulo $d\defeq Q\prod_a d_a$.
For any $e\in \Z$ we have by \change{\eqref{eq:psiuv}} that
\begin{align*}
\ssum{n_1<n\le x+y \\ n\equiv e\bmod d} \psi'_{n} &=
\ssum{n_1<n\le x+y \\ n\equiv e\bmod d} \bigg[
\frac{1}{d}(\psi'_n+\cdots+\psi'_{n+d-1})+O\bigg( 
k\psi'_x \sum_{z(n)<p\le z(n+d)} \frac{1}{p} \bigg) \bigg] \\
&=O(\psi'_x)+\frac{1}{d} \sum_{n_1<n\le x+y} \psi'_{n},
\end{align*}
\change{where we used that $k\le \log x$ and 
\[
\sum_{z(x) <p \le z(x+y+d)} \frac{1}{p} \ll \frac{1}{\log x}.
\]
}
Therefore,
\be
\label{eq:SunSn1n2}
\begin{split}
\ssum{n_1<n_2\le x+y\\n_2-n_1\not\in\cH-\cH}
\psi'_{n_2}\xi_b\,S(n_1,n_2)
&\le 
\frac{1}{Q} \sum_{b\bmod Q} \xi_b 
\ssum{\change{n_1<n_2\le x+y}} \psi'_{n_2}
\sum_{\mathbf{d}\in \cD(n_1)} \prod_a \frac{f_a(d_a)}{d_a}\\
&\qquad+O\bigg( \psi'_x \sum_{b\bmod Q} \xi_b
\sum_{\mathbf{d}\in \cD(n_1)} \prod_a f_a(d_a) \bigg).
\end{split}
\ee
Now \eqref{sumlam} implies that
\dalign{
\sum_{b\bmod Q} \xi_b &=\prod_{p\le v} \sum_{c=0}^{p-1}
\(1-\frac{|(\cH \cup (\cH+c))\bmod p|}{p}\) \\
&=\prod_{p\le v} \bigg( p - 2\nu_p+\frac{1}{p} \sum_a \lambda_a(p) \bigg) \\
&=Q \prod_{p\le v}\(1-\frac{\nu_p}{p}\)^2.
}
Hence, combining \eqref{eq:EX1X2-2} and \eqref{eq:SunSn1n2},
and reinserting terms with $n_2-n_1\in \cH-\cH$, for each $n_1$
we obtain that
\dalign{
\E\sum_{n_1<n_2\le x+y} X_{n_1} X_{n_2}
&\le \psi_{n_1}
\sum_{n_1<n_2\le x+y} \psi_{n_2} \prod_{v<p\le z(n_1)} \pfrac{p^2-2p\nu_p}{(p-\nu_p)^2} \sum_{\mathbf{d}\in \cD(n_1)} \prod_a \frac{f_a(d_a)}{d_a} \\
&\qquad+O\Bigg( \psi_x^2 Q \sum_{\mathbf{d}\in \cD(n_1)} \prod_a f_a(d_a)+\psi_x k^2 \Bigg).
}
Extending the first sum over $\mathbf{d}$ to all pairwise coprime tuples $\mathbf{d}$ composed of prime factors in $(v,z(n_1)]$,
and applying \eqref{sumlam} again,
we find that
\dalign{
\sum_{\mathbf{d}\in\cD(n_1)}\prod_a \frac{f_a(d_a)}{d_a} &\le 
\prod_{v<p\le z(n_1)}\(1+\sum_a \frac{f_a(p)}{p}\) \\ &=
\prod_{v<p\le z(n_1)}\(1+\frac{\nu_p^2}{p(p-2\nu_p)}\).
}
Finally, summing over $n_1$ we conclude that
\begin{align*}
\E \sum_{x<n_1<n_2\le x+y} X_{n_1} X_{n_2} &\le 
\sum_{x<n_1<n_2 \le x+y} \psi_{n_1} \psi_{n_2}+
O(\psi_x k^2y+\psi_x^2QTy),
\end{align*}
where
\[
T\defeq\max_{n_1} \sum_{\mathbf{d}\in \cD(n_1)} \prod_a f_a(d_a) .
\]
\change{Since $X_n^2=X_n$ we arrive at
\[
\E \bigg( \sum_{x<n\le x+y} X_n \bigg)^2 \le \E \sum_{x<n\le x+y} X_n+
\ssum{x<n_1,n_2 \le x+y \\ n_1\ne n_2} \psi_{n_1} \psi_{n_2}+
O(\psi_x k^2y+\psi_x^2QTy),
\]
}
Comparing this with \eqref{eq:E1}, it follows that the variance
in question satisfies
\change{\be\label{eq:Var-1}
\begin{split}
\mathbb{V} \sum_{x<n\le x+y} X_n &\le
 \sum_{x<n\le x+y} \big( \psi_n - \psi_n^2 \big)+O\big( \psi_x k^2y+\psi_x^2QTy \big)+\\ &\qquad\qquad\qquad+O\bigg(\frac{yD}{x}\sum_{x<n\le x+y} \psi_n+\frac{y^2D^2}{x^2}+\frac{yD}{x}\bigg) \\
 &\ll y \psi_x+k^2 y \psi_x+\psi_x^2 QTy+\frac{y^2 D}{x}\psi_x+\frac{y^2D^2}{x^2}+\frac{yD}{x}\\
 &\ll k^2 y \psi_x+\psi_x^2 QTy+\frac{yD}{x}\Big[ 1+y(\psi_x+D/x)\Big].
\end{split}\ee
}

To bound $T$, we consider two cases. First, suppose that
$k\le (\log x)^{1/2} / \log_2 x$, and let
$v\defeq 4k$.
\change{In this case, we argue crudely, using \eqref{sumlam} and $\nu_p\le k$ for all $p$,
obtaining}
\begin{align*}
T&\le \prod_{v<p\le z(2x)} \Big( 1+\sum_{|a|<p/2}f_a(p) \Big)\\
&=\prod_{4k<p\le z(2x)}\(1+\frac{k^2}{p-2k}\)\\
&\le\exp\big(k^2(\log_2x-\log_2 k+O(1))\big)
\ll e^{-k^2}(\log x)^{k^2}.
\end{align*}
The prime number theorem implies that $\log Q \ll v$ and
 thus $QT \ll (\log x)^{k^2}$.
Therefore, \eqref{eq:Var-1} implies \eqref{eq:2nd-mom}.

Next, suppose that 
\be\label{eq:large-k}
\frac{(\log x)^{1/2}}{\log_2 x} \le k\le \frac{\log x}{(\log_2 x)^2},
\qquad\text{with}\quad k=(\log x)^{\varrho},
\ee
and put
\be\label{eq:v-def} 
v\defeq\frac{4\log x}{\log_2 x},
\ee
so that $v\ge 4k$ \change{and $Q=x^{o(1)}$}. For a parameter $U\le x^5$, to be chosen later, let
\begin{align*}
\cD^-_U&\defeq\big\{\mathbf{d}\in\cD(n_1):
\textstyle\prod d_a\le U\big\},\\
\cD^+_U&\defeq\big\{\mathbf{d}\in\cD(n_1):
\textstyle\prod d_a>U\big\}.
\end{align*}
We begin \change{with} $\cD_U^-$. For any parameter $\alpha > 0$ we have, by \eqref{sumlam},
\begin{align*}
\sum_{\mathbf{d}\in\cD_U^-}\prod_af_a(d_a) &\le U^{\alpha}
\sum_{\mathbf{d}\in\cD_U^-}\prod_a \frac{f_a(d_a)}{d_a^\alpha}\\ 
&\le U^{\alpha}
\prod_{v<p\le z(2x)}\(1+\frac{k^2}{p^\alpha(p-2k)}\)\\
&\le U^{\alpha}\exp\bigg\{2k^2
\sum_{v<p\le z(2x)}\frac{1}{p^{1+\alpha}}\bigg\}
\\ &\le U^{\alpha}\exp\bigg\{
O\bigg(\frac{k^2}{\alpha v^\alpha\log v}\bigg)\bigg\}.
\end{align*}
Let
\[
\alpha\defeq 2\varrho-1+\frac{3\log_3x}{\log_2x},
\]
so that \change{$\frac{\log_3 x}{\log_2 x} \le \alpha \le 1$} by \eqref{eq:large-k}.
Recalling \eqref{eq:v-def}, we see that
\[
\alpha v^\alpha \log v
\gg \alpha (\log_2 x)^{1-\alpha} (\log x)^{\alpha}
\gg (\log x)^{\alpha}
=k^2(\log_2 x)^3/\log x,
\]
hence it follows that
\be\label{eq:D-minus}
\sum_{\mathbf{d}\in \cD_U^-} \prod_a f_a(d_a) \le U^{2\varrho-1}
\exp \bigg\{ O\bigg( \frac{\log x\log_3 x}{\log_2 x} \bigg) \bigg\}.
\ee

Next, we turn to $\cD_U^+$, and make use of the special 
structure of $\cD(n_1)$.
For any parameter $\beta\in[0,1)$ we have
\begin{align*}
\sum_{\mathbf{d}\in\cD_U^+}\prod_af_a(d_a)&\le U^{-\beta}
\sum_{\mathbf{d}\in\cD(n_1)}\prod_a(f_a(d_a)d_a^\beta)\\
&\le U^{-\beta}\ssum{1\le m\le y\\m\not\in\cH-\cH}
~\ssum{\mathbf{d}\in\cD(n_1)\\\forall a,\;d_a\,\mid\,(m-a)}
\prod_a(f_a(d_a)d_a^\beta)\\
&\le U^{-\beta}\ssum{1\le m\le y\\m\not\in\cH-\cH}
\prod_{a\in\cH-\cH}~\sprod{p\,\mid\,m-a\\\max\{v,2|a|\}<p\le z(2x)}
\bigg(1+\frac{\lambda_a(p)p^{\beta}}{p-2\nu_p}\bigg).
\end{align*}
Note that each prime $p$ can appear at most once in the double product,
since $p\mid(m-a)$ and $p\mid(m-a')$ implies $p\mid(a-a')$, which 
forces $a=a'$. We split the last product into two pieces according to whether
$p\le w$ or $p>w$, where $w$ is a parameter to be chosen later.
For any $m\not\in \cH-\cH$ we have
\begin{align*}
\prod_{a\in\cH-\cH}~\sprod{p\,\mid\,m-a\\\max\{v,2|a|\}<p\le w}
\hskip-10pt\bigg(1+\frac{\lambda_a(p) p^{\beta}}{p-2\nu_p}\bigg)
&\le \prod_{v<p\le w} \(1+2k p^{\beta-1}\) \\
&\le \exp \big\{ 2k w^\beta \log_2 x \big\}
\end{align*}
\change{for large $x$.}
We bound the contribution of larger primes trivially using the
fact that any integer $m-a$ is divisible by $\ll\frac{\log x}{\log_2 x}$
such primes (here, it is crucial that $m\ne a$).
Thus, for any $m\not\in \cH-\cH$ we have
\[
\prod_{a\in\cH-\cH}~\sprod{p\,\mid\,m-a\\\max\{w,2|a|\}<p\le z(2x)}
\hskip-10pt\bigg(1+\frac{\lambda_a(p)p^{\beta}}{p-2\nu_p}\bigg)
\le \exp\bigg\{O\bigg(k^3 w^{\beta-1} \frac{\log x}{\log_2 x}\bigg)\bigg\}.
\]
We now put
\[
w\defeq k^2 \log x\mand
\beta\defeq \frac{1-\varrho - 2 \frac{\log_3 x}{\log_2 x}}{2\varrho+1}.
\]
By \eqref{eq:large-k} we have $\beta \ge 0$, and clearly $\beta<1$.
It follows that
\be\label{eq:D-plus}
\sum_{\mathbf{d}\in \cD_U^+} \prod_a f_a(d_a) \le 
y U^{- \frac{1-\varrho}{2\varrho+1}}
\exp \bigg\{ O\Big( \frac{\log x\,\log_3 x}{\log_2 x} \Big) \bigg\}.
\ee
Comparing \eqref{eq:D-minus} with \eqref{eq:D-plus}, we choose $U$
so that $U^{2\varrho-1}=yU^{-\frac{1-\varrho}{2\varrho+1}}$, that is,
\[
U\defeq y^{\frac{2\varrho+1}{4\varrho^2-\varrho}}.
\]
\change{Since $1/2+o(1) \le \rho \le 1+o(1)$, the exponent of $y$ is $\le 4+o(1) \le 5$
for large $x$.}
This gives
\[
T \le 
y^{\frac{4\varrho^2-1}{4\varrho^2-\varrho}}
\exp \bigg\{ O\Big( \frac{\log x\log_3 x}{\log_2 x} \Big) \bigg\}.
\]
Inserting this into \eqref{eq:Var-1} yields the inequality
\eqref{eq:2nd-mom}, and completes the proof of Proposition \ref{prop:variance}.
\end{proof}


{\Large\section{Random sieving by small primes}
\label{sec:gaps-randomly-sieved}}

Throughout the sequel, we employ the notation
\be\label{eq:Theta-notations}
\Theta_z\defeq\prod_{p\le z}\bigg(1-\frac1p\bigg)\mand
\Theta_{z_1,z_2}\defeq\prod_{z_1<p\le z_2}\bigg(1-\frac1p\bigg)
=\frac{\Theta_{z_2}}{\Theta_{z_1}}.
\ee

Throughout this section,
we assume that $x$ and $y$ are large real numbers that satisfy
\be\label{eq:xy}
W_y\log y\in[\alpha(\log x)^2,\beta(\log x)^2],
\ee
where $W_y$ is given by \eqref{eq:By-defn}, and $\alpha,\beta$ are fixed
with $0<\alpha<\beta$. Note that \eqref{eq:By-bounds} and \eqref{eq:xy}
yield the estimates
\be\label{eq:yx-relns}
(\log x)^2\ll y\ll\frac{\log_2x}{\log_3x}(\log x)^2.
\ee
We adopt the convention that any constants
implied by $O$ and $\ll$ may depend on $\alpha,\beta$ but are
independent of other parameters.

We define
\[
\cS_w(y) \defeq [0,y]\cap\cS_w
\]
and when the value of $y$ is clear from context we put
\[
S_w\defeq|\cS_w(y)|.
\]
Using a variety of tools,
we give sharp probability bounds for $S_w$ at five
different ``checkpoint'' values $w_1<w_2<w_3<w_4<w_5$ (defined below), with each $S_{w_{i+1}}$ controlled in terms of $S_{w_i}$ for $i=1,2,3,4$. Our arguments are summarised as follows, \change{where the range is a range of primes:}

\bigskip
 
\begin{tabular}{rl}
\hline
\change{Range} & Estimation technique \\
\hline
\change{$[2,w_1]$} & Lower bound by $W_y$ \eqref{easy} \\
\change{$(w_1,w_2]$} & Buchstab identity, sieve upper bound (Lemma~\ref{lem:w1w2}) \\
\change{$(w_2,w_3]$} & Buchstab identity, large sieve, Bennett inequality (Lemma~\ref{lem:w2w3})\\
\change{$(w_3,w_4]$} & Martingale interpretation, Azuma inequality (Lemma~\ref{lem:w3w4})\\
\change{$(w_4,w_5]$} & \change{Graph} interpretation, combinatorial expansion (Lemma~\ref{lem:w4w5})\\
\change{$(w_5,z]$} & Combinatorial expansion (Lemmas \ref{lem:SzSP}, \ref{lem:VarSP}, Corollary \ref{cor:binomSk}) \\
\hline
\end{tabular}

\bigskip

The most delicate part of the argument is dealing with
primes $p$ near $\log x$, that is, $w_1 \le p\le w_3$
(see Lemmas~\ref{lem:w1w2} and~\ref{lem:w2w3}). To initialize the argument, we observe from definition \eqref{eq:By-defn} of $W_y$ that we have
the lower bound
\be\label{easy}
S_{w_1} \geq W_y.
\ee
Now we successively increase the sieving range from $S_{w_1}$ to $S_{w_2}$, and so on, up to $S_{w_5}$.

\bigskip

\begin{lemma}[Sieving for $w_1 < p \leq w_2$]\label{lem:w1w2}
Let $w_1\defeq(y/\log y)^{1/2}$ and
$w_2\defeq\log x\,\log_3x$.
With probability one, we have
\[
S_{w_2}=\bigg(1+O\bigg(\frac{\log_4 x}{\log_3 x}\bigg)\bigg)S_{w_1}.
\]
\end{lemma}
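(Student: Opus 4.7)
The plan is to condition on the initial residue choices $\cA_{w_1}$, so that $\cS_{w_1}(y)$ and $S_{w_1}$ become deterministic, and then analyze $S_{w_2}$ as a function of the independent uniform random residues $(a_p)_{w_1 < p \le w_2}$. Each $n \in \cS_{w_1}(y)$ survives the additional sievings independently with probability $\prod_{w_1 < p \le w_2}(1 - 1/p)$, so
\[
\E[S_{w_2} \mid \cA_{w_1}] = S_{w_1}\, \Theta_{w_1, w_2}.
\]
By Mertens' theorem, $\Theta_{w_1, w_2} = (\log w_1/\log w_2)\bigl(1 + O(1/\log w_1)\bigr)$. From \eqref{eq:yx-relns}, $\log y = 2\log_2 x + O(\log_3 x)$, hence $\log w_1 = \tfrac{1}{2}(\log y - \log_2 y) = \log_2 x + O(\log_3 x)$, while $\log w_2 = \log_2 x + \log_4 x$. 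Thus $1 - \Theta_{w_1, w_2} = O(\log_3 x/\log_2 x)$, which is $o(\log_4 x/\log_3 x)$ for large $x$. The conditional mean therefore already matches $S_{w_1}$ to within the claimed accuracy.

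Next I would establish concentration of $S_{w_2}$ about this conditional mean using Azuma's inequality (Lemma \ref{lem:Azuma-ineq}), applied to the Doob martingale obtained by revealing the residues $a_p$ for $p \in (w_1, w_2]$ in increasing order. The increment upon revealing $a_p$ is bounded in absolute value by
\[
M_p \defeq \max_{b \in \mathbb{Z}/p\mathbb{Z}} \bigl|\cS_{w_1}(y) \cap (b \bmod p)\bigr|,
\]
and by the Selberg/linear sieve upper bound (Lemma \ref{lem:sieve-upper} and its extension to arbitrary sifting residues), $M_p \ll (y/p)/\log w_1$, so $\sum_{w_1 < p \le w_2} M_p^2 \ll y^2/(w_1 \log^2 w_1)$. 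Taking deviation $t = S_{w_1}\log_4 x/\log_3 x$, and using the lower bound $S_{w_1} \ge W_y \gtrsim \log^2 x/\log_2 x$ supplied by \eqref{easy} and \eqref{eq:By-bounds}, Azuma produces a failure probability that decays faster than any polynomial in $1/x$. A Borel--Cantelli argument over a countable cofinal family of pairs $(x, y)$ satisfying \eqref{eq:xy} then yields the conclusion almost surely as $x \to \infty$.

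The main obstacle I foresee is the sieve bound on $M_p$: Lemma \ref{lem:sieve-upper} is stated for integers coprime to $\prod_{q \le w_1} q$, whereas $\cS_{w_1}(y)$ is obtained by avoiding arbitrary random residue classes $a_q \bmod q$. The required extension is standard---the Selberg/linear sieve upper bounds depend only on the local densities of the sifting residues and not on their particular values---but this point should be recorded explicitly in the write-up. A secondary remark is that the expected decrease $S_{w_1}(1 - \Theta_{w_1, w_2})$ is already of slightly smaller order (namely $\log_3 x/\log_2 x$) than the target error ($\log_4 x/\log_3 x$), giving a comfortable margin for the Azuma concentration error and explaining why the argument in this regime is considerably shorter than those required for larger sieving primes in the subsequent lemmas.
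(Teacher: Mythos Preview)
Your approach differs substantially from the paper's, and there is a genuine issue.

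The paper's proof is entirely \emph{deterministic}: using the very sieve bound $M_p = |\cS_{w_1}(y)\cap R_p| \ll (y/p)/\log w_1$ that you invoke (valid for every choice of residues, by the translation you note), one simply sums
\[
0 \le S_{w_1}-S_{w_2} \le \sum_{w_1<p\le w_2} M_p \ll \frac{y}{\log y}\,\log\frac{\log w_2}{\log w_1}.
\]
The only subtle point is to compare the right side to $S_{w_1}$. Writing $C_y\defeq y/(S_{w_1}\log y)$ and using \eqref{eq:xy} and \eqref{eq:By-bounds} to get $C_y\ll \log_2 x/\log_3 x$, a short balancing argument (the function $z\mapsto z(\log_3 x-\log z)$ is increasing on the relevant range) gives $C_y\log(\log w_2/\log w_1)\ll \log_4 x/\log_3 x$. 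Hence $S_{w_1}-S_{w_2}\ll S_{w_1}\,\log_4 x/\log_3 x$ for \emph{every} sample, and ``with probability one'' is immediate. No randomness, no Borel--Cantelli.

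Your probabilistic route via Azuma is unnecessary here, and the claim that the failure probability decays faster than any polynomial in $1/x$ is incorrect. Your stated lower bound $S_{w_1}\ge W_y\gtrsim \log^2 x/\log_2 x$ is not what \eqref{eq:By-bounds} gives in the worst case; one only has $W_y\gg y\log_3 x/(\log_2 x)^2$. With $t=S_{w_1}\log_4 x/\log_3 x$ and your bound $\sum M_p^2\ll y^2/(w_1\log^2 w_1)$, the Azuma exponent $t^2/(2\sum M_p^2)$ is only of order $\log x\cdot(\log_4 x)^2/(\log_2 x)^{O(1)}$, so the failure probability is $x^{-o(1)}$, not $x^{-C}$. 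This is still summable over dyadic $x$, so Borel--Cantelli can be salvaged, but you then conclude only ``almost surely for all large $x$'' rather than ``for every outcome'', which is the intended reading (and what Corollary~\ref{cor:w1w5} uses). The cleaner fix is to notice that you already possess the deterministic inequality $S_{w_1}-S_{w_2}\le \sum_{w_1<p\le w_2} M_p$; drop the martingale and evaluate this sum directly as the paper does.
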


\begin{proof}
\change{In this section and the next one, we adopt the notation
$R_p$ for the residue class $a_p\bmod p$.}
From the Buchstab identity
$$ S_{w_2}=S_{w_1} - \sum_{w_1 < p \le w_2} |\cS_{p-1}(y)\cap R_p|$$
we have 
\be\label{eq:phone1}
S_{w_1}\ge S_{w_2}\ge S_{w_1}-\sum_{w_1<p\le w_2}|\cS_{w_1}(y)\cap R_p|.
\ee
The sieve upper bound (Lemma~\ref{lem:sieve-upper}) and Mertens' theorem together imply that
\be\label{eq:phone2}
\sum_{w_1<p\le w_2}|\cS_{w_1}(y)\cap R_p|
\ll\frac{y}{\log y}\log\Big(\frac{\log w_2}{\log w_1}\Big)
=S_{w_1}C_y\log\Big(\frac{\log w_2}{\log w_1}\Big),
\ee
where
\[
C_y\defeq\frac{y}{S_{w_1}\log y}.
\]
By \eqref{eq:xy} and \eqref{eq:yx-relns} we have
\be\label{eq:B'ybds}
C_y\le\frac{y}{W_y\log y}\ll\frac{\log_2x}{\log_3x}.
\ee
Using \eqref{eq:xy} and the lower bound $w_1^2=S_{w_1}C_y\ge W_yC_y$
we see that
\[
\log w_1\ge \log_2x-\tfrac12(\log_2y-\log C_y)+O(1),
\]
hence
\begin{align*}
\log\Big(\frac{\log w_2}{\log w_1}\Big)
&\le\log\bigg(\frac{\log_2x+\log_4x}
{\log_2x-\tfrac12(\log_2y-\log C_y)+O(1)}\bigg)\\
&\ll\frac{\log_2y-\log C_y}{\log_2x}\ll\frac{\log_3x-\log C_y}{\log_2x}.
\end{align*}
Inserting this bound into \eqref{eq:phone2} we find that
\[ 
\sum_{w_1<p\le w_2}|\cS_{w_1}(y)\cap R_p|
\ll S_{w_1}\frac{C_y(\log_3x-\log C_y)}{\log_2x}.
\]
The function $z(\log_3 x-\log z)$ is increasing for 
$z\le e^{-1}\log_2 x$, hence by \eqref{eq:B'ybds} we have
\[
\sum_{w_1<p\le w_2}|\cS_{w_1}(y)\cap R_p|
\ll S_{w_1}\frac{\log_4x}{\log_3x}
\]
 and the stated result follows from \eqref{eq:phone1}.
\end{proof}

\begin{lemma}[Sieving for $w_2 < p \leq w_3$]\label{lem:w2w3}
Let $w_2\defeq\log x\,\log_3x$ and
$w_3\defeq\log x\,(\log_2x)^2$.
Conditional on $\cA_{w_2}$ satisfying $S_{w_2} \ge \frac12 W_y$, we have
\[
\P_{w_2,w_3}\bigg(S_{w_3}\le\bigg(1-\frac{1}{\log_3x}\bigg)S_{w_2}\bigg)
\ll x^{-100}.
\]
\end{lemma}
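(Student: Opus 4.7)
The plan is to apply the Buchstab identity
\[
S_{w_2} - S_{w_3} = \Big|\cS_{w_2}(y) \cap \bigcup_{w_2 < p \le w_3} R_p\Big|
\le \sum_{w_2 < p \le w_3} X_p,
\]
where $R_p = a_p \bmod p$ and $X_p \defeq |\cS_{w_2}(y) \cap R_p|$. Once $\cA_{w_2}$ is fixed, the set $A \defeq \cS_{w_2}(y)$ is deterministic and the random variables $X_p$ are mutually independent (in $p$) with $\E X_p = S_{w_2}/p$. It then suffices to show
\[
\P_{w_2,w_3}\Big(\sum_{w_2 < p \le w_3} X_p > \tfrac{S_{w_2}}{\log_3 x}\Big) \ll x^{-100}.
\]

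I would first verify that the mean is negligible. By Mertens' theorem, $\sum_{w_2 < p \le w_3} 1/p = \log(\log w_3/\log w_2) + o(1) \sim \log_3 x/\log_2 x$, so
\[
\E \sum X_p \sim S_{w_2}\cdot \tfrac{\log_3 x}{\log_2 x},
\]
which is $o(S_{w_2}/\log_3 x)$ since $(\log_3 x)^2 = o(\log_2 x)$. Thus the remaining task is to control an upward deviation of size $t \sim S_{w_2}/\log_3 x$ with very strong exponential decay.

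For concentration, I would combine two variance estimates. Writing $c_{p,b}$ for the count of elements of $A$ in the class $b \bmod p$, a pointwise Cauchy--Schwarz bound gives $\V X_p \le (\max_b c_{p,b}) \E X_p$, while the sieve upper bound (Lemma~\ref{lem:sieve-upper}) applied to $A \subseteq \cS_{w_2}(y)$ yields $\max_b c_{p,b} \ll y/(p \log_2 x)$ and in particular a uniform bound $M \ll y/(w_2 \log_2 x)$ on $|X_p|$. The large sieve inequality applied to $A \subset [0,y]$ furnishes the complementary estimate
\[
\sum_{w_2 < p \le w_3} p^2\, \V X_p \le (y + w_3^2)\, S_{w_2},
\]
which is sharper than the trivial bound for primes $p$ close to $w_2$. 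With these two ingredients I would apply Bennett's inequality (Lemma~\ref{lem:Bennett-ineq}) to the centered sum $\sum_p(X_p - \E X_p)$.

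The main obstacle is verifying the resulting Bennett exponent is sufficiently large. Since $Mt/\sigma^2 \to \infty$, the estimate sits in the Poisson regime $\sL(u) \sim u\log u$, and the exponent behaves like $(t/M)\log(Mt/\sigma^2)$. Achieving the required growth to overwhelm $x^{-100}$ hinges on the conditional lower bound $S_{w_2} \ge W_y/2$, combined with the relations \eqref{eq:xy} and \eqref{eq:yx-relns} that tie $y$ and $W_y$ to $\log x$. If a direct application of Bennett on the full interval $(w_2, w_3]$ is not sharp enough, I would instead partition $(w_2, w_3]$ dyadically into $O(\log_3 x)$ subintervals, apply Bennett on each piece where the parameters $M$ and $\sigma^2$ are more tightly controlled by the narrower $p$-range, and conclude by a union bound.
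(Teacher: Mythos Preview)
Your proposal is correct and matches the paper's proof essentially line for line: Buchstab to reduce to $\sum_p X_p$, the sieve upper bound (Lemma~\ref{lem:sieve-upper}) to get $M \ll y/(w_2\log_2 x)$, Montgomery's large sieve to bound $\sum_p p^2\,\V X_p \ll w_3^2 S_{w_2}$ and hence $\sigma^2 \ll (w_3/w_2)^2 S_{w_2}$, and then Bennett in the Poisson regime. The direct application on the whole range $(w_2,w_3]$ already succeeds---no dyadic partition is needed: one computes $Mt/\sigma^2 \gg \log x/(\log_2 x)^5$ and $(t/M)\log(Mt/\sigma^2) \gg (S_{w_2}\log x\,(\log_2 x)^2)/y \gg \log x\,\log_3 x$, the last step using $S_{w_2}\ge \tfrac12 W_y$ together with the lower bound in \eqref{eq:By-bounds}. (Your Cauchy--Schwarz variance bound is correct but unnecessary; the large sieve alone suffices.)
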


\begin{proof}
As in the previous lemma, we start with
\be\label{eq:Sw2s3}
S_{w_3} \ge S_{w_2} -
\sum_{w_2<p\le w_3} |\cS_{w_2}(y)\cap R_p|.
\ee
Let $X_p\defeq|\cS_{w_2}(y)\cap R_p|-p^{-1}S_{w_2}$
for each prime $p\in (w_2,w_3]$.
The variables $X_p$ are independent and have a mean value of zero,
and by the sieve upper bound (Lemma~\ref{lem:sieve-upper}) it follows that
\[
|X_p|\ll\frac{y}{p\log y}\ll\frac{y}{w_2\log_2x},
\]
hence
\be\label{eq:Bennett1}
|X_p|\le M\defeq\frac{c\,y}{\log x\,\log_2x\,\log_3x}\qquad(w_2<p\le w_3)
\ee
for some absolute constant $c>0$.
Using Montgomery's Large Sieve inequality
(see~\cite[Equation~(9.18)]{FI} or~\cite{Mo}),
\[
\sum_{w_2<p\le w_3}p^2\,\V X_p=\sum_{w_2<p\le w_3}p
\sum_{a\in\Z/p\Z}\bigg(\big|\cS_{w_2}(y)\cap(a\bmod p)\big|
-p^{-1}S_{w_2}\bigg)^2\le 2w_3^2\,S_{w_2},
\]
which implies that
\be\label{eq:Bennett2}
\sigma^2\defeq\sum_{w_2<p\le w_3}\V X_p\le 2w_2^{-2}w_3^2\,S_{w_2}
\ll\frac{(\log_2x)^4}{(\log_3x)^2}S_{w_2}.
\ee
We apply Bennett's inequality (Lemma~\ref{lem:Bennett-ineq}) with
$t\defeq S_{w_2}/(2\log_3x)$. By \eqref{eq:Bennett1}, \eqref{eq:Bennett2} and \eqref{eq:yx-relns},
we have
\[
\frac{Mt}{\sigma^2}\gg\frac{y}{\log x\,(\log_2x)^5}
\gg \frac{\log x}{(\log_2 x)^5},
\]
and therefore
\[
\frac{\sigma^2}{M^2}\sL\Big(\frac{Mt}{\sigma^2}\Big)
\gg\frac{t}{M}\log\Big(\frac{Mt}{\sigma^2}\Big)
\gg\frac{S_{w_2}\log x\,(\log_2x)^2}{y}\gg\log x\,\log_3x,
\]
where the last bound follows from \eqref{eq:By-bounds} and our assumption that
$S_{w_2}\ge\tfrac12W_y$. Lemma~\ref{lem:Bennett-ineq} now shows that
for some constant $c'>0$,
\[
\P\bigg(\bigg|\sum_{w_2<p\le w_3}X_p\bigg|\ge\frac{S_{w_2}}{2\log_3x}\bigg)
\le 2\exp\big\{-c'\log x\,\log_3x\big\}\ll x^{-100}.
\]
Thus, with probability at least $1-O(x^{-100})$ we have
\[
\sum_{w_2<p\le w_3}\big|\cS_{w_2}(y)\cap R_p\big|\le S_{w_2}
\bigg(\frac{1}{2\log_3x}+\sum_{w_2<p\le w_3}\frac1p\bigg)
\le\frac{S_{w_2}}{\log_3x}
\]
for sufficiently large $x$.
Recalling \eqref{eq:Sw2s3}, the proof is complete.
\end{proof}

\begin{lemma}[Sieving for $w_3 < p \leq w_4$]\label{lem:w3w4}
Let $w_3\defeq\log x\,(\log_2x)^2$ and
$w_4\defeq y^{4/3}$.
Conditional on $\cA_{w_3}$ satisfying
$S_{w_3}\ge\frac14W_y$, we have
\[
\P_{w_3,w_4}\bigg(\big|S_{w_4}-\tfrac38S_{w_3}\big|
\ge\frac{S_{w_3}}{(\log_2x)^{1/2}}\bigg)\ll x^{-100}.
\] 
\end{lemma}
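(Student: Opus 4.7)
The plan is to reveal the residues $a_p\bmod p$ one prime at a time as $p$ ranges over $(w_3,w_4]$, building a Doob martingale whose terminal value is $S_{w_4}$, and then apply Azuma's inequality (Lemma \ref{lem:Azuma-ineq}). Enumerate $w_3<p_1<\cdots<p_N\le w_4$ and set
\[
T_j\defeq\bigl|\cS_{w_3}(y)\setminus\textstyle\bigcup_{i\le j}(a_{p_i}\bmod p_i)\bigr|,
\]
so $T_0=S_{w_3}$ and $T_N=S_{w_4}$. Conditional on $\cA_{w_3}$, the $a_{p_j}$ are independent uniform and satisfy $\E[T_j\mid a_{p_1},\ldots,a_{p_{j-1}}]=(1-1/p_j)T_{j-1}$, so $Z_j\defeq T_j/\prod_{i\le j}(1-1/p_i)$ is a martingale with $Z_0=S_{w_3}$ and $Z_N=S_{w_4}/\Theta_{w_3,w_4}$. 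By Mertens' theorem and \eqref{eq:yx-relns}, $\Theta_{w_3,w_4}=3/8+O(\log_3 x/\log_2 x)$, so the resulting systematic bias $|\Theta_{w_3,w_4}-3/8|\cdot S_{w_3}=o(S_{w_3}/\sqrt{\log_2 x})$ is absorbed into the error.

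For the martingale increments, writing $T_j=T_{j-1}-N_j$ with $N_j\defeq|T_{j-1}\cap(a_{p_j}\bmod p_j)|$ gives $|Z_j-Z_{j-1}|\ll N_j+T_{j-1}/p_j$ (using $\Theta_{w_3,w_4}\ge 1/4$). After a CRT shift $\cS_{w_3}(y)$ becomes a set of integers in an interval of length $y$ coprime to $\prod_{q\le w_3}q$, so Lemma \ref{lem:sieve-upper} at sieve level $w_3$ yields $N_j\ll (y/p_j)/(1+\log(y/p_j))$ for $p_j\le y$ (observe that \eqref{eq:yx-relns} forces $y/w_3<w_3$, so the minimum in Lemma \ref{lem:sieve-upper} is always $\log(y/p_j)$), and trivially $N_j\ll 1$ for $p_j>y$; likewise $T_{j-1}/p_j\le S_{w_3}/p_j\ll y/(p_j\log_2 x)$ using $S_{w_3}\le S_{w_1}\ll y/\log_2 x$ from the fundamental lemma of the sieve at level $w_1$. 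The substitution $u=\log(y/p)$ converts $\sum_{w_3<p\le y}c_j^2$ into an integral of the form $(y/\log y)\int_0^{\log(y/w_3)}e^u/(1+u)^2\,du$, dominated by its upper endpoint; after simplification via \eqref{eq:yx-relns} (and noting that the contribution from $p_j>y$ is of lower order), this yields $\sum_j c_j^2\ll (\log x)^3/((\log_2 x)^3(\log_3 x)^2)$.

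Finally, \eqref{eq:xy} together with \eqref{eq:yx-relns} gives $S_{w_3}\ge W_y/4\gg(\log x)^2/\log_2 x$. Choosing $t\asymp S_{w_3}/\sqrt{\log_2 x}$ then yields
\[
\frac{t^2}{2\sum_j c_j^2}\gg(\log x)(\log_3 x)^2,
\]
which exceeds $200\log x$ for all sufficiently large $x$, so Azuma's inequality (Lemma \ref{lem:Azuma-ineq}) delivers $\P_{w_3,w_4}(|Z_N-Z_0|\ge t)\ll x^{-100}$; translating via the near-equality of $\Theta_{w_3,w_4}$ with $3/8$ gives the stated bound on $|S_{w_4}-\tfrac38 S_{w_3}|$. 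The main technical obstacle is controlling $\sum_j c_j^2$ for $p_j$ close to $y$: the crude estimate $N_j\ll y/p_j$ would inflate the sum by a polylogarithmic factor and ruin the final ratio, so the sharp factor $(1+\log(y/p_j))^{-1}$ from Lemma \ref{lem:sieve-upper} must be retained throughout, which is what the integral substitution above accomplishes cleanly.
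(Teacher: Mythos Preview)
Your proof is correct and follows essentially the same approach as the paper's: form the Doob martingale $Z_j=S_{p_j}/\Theta_{w_3,p_j}$, bound the increments via Lemma~\ref{lem:sieve-upper}, sum the squares to $\ll y^2/(\log x\,(\log_2 x)^5)$, and apply Azuma to land on an exponent of order $\log x\,(\log_3 x)^2$. The only cosmetic difference is that you lower-bound $S_{w_3}$ by $W_y/4\gg(\log x)^2/\log_2 x$ directly from the standing assumption \eqref{eq:xy}, whereas the paper routes this through the Iwaniec bound \eqref{eq:By-bounds} in the form $W_y\gg y\log_3 x/(\log_2 x)^2$; both paths give the same final exponent.
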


\begin{proof}
Let $p_0\defeq w_3$ and let $p_1 < \ldots <p_m$
be the primes in $(w_3,w_4]$.
Using the notation \eqref{eq:Theta-notations}, we 
define random variables by
\[
X_j\defeq\Theta_{w_3,p_j}^{-1}S_{p_j}\qquad(j=0,1,\ldots,m).
\]
The sequence $X_0, X_1,\ldots,X_m$ is a martingale since
\[
\E(X_{j+1}|X_j)=\Theta_{w_3,p_{j+1}}^{-1}
\E(S_{p_{j+1}}|\cA_{p_j})
=\Theta_{w_3,p_{j+1}}^{-1}
\(1-p_{j+1}^{-1}\)S_{p_j}=X_j.
\]
Note that
\be\label{eq:Sw3-ests}
X_0=S_{w_3}\ge\tfrac14W_y\gg\frac{y\log_3x}{(\log_2x)^2},
\ee
where we have used \eqref{eq:By-bounds} in the last step.

We apply Azuma's inequality (Lemma~\ref{lem:Azuma-ineq}).
If $p_{j+1}>y$, then \change{$|X_{j+1}-X_j|\ll 1$ since $\Theta_{w_3,p_j}^{-1}\ll 1$}. In the case that
$p_{j+1} \le y$, Lemma~\ref{lem:sieve-upper} shows that
 for any value of $R_{p_{j+1}}$ we have
\begin{align*}
|X_{j+1}-X_j|&=\Theta_{w_3,p_j}^{-1}
\big|\(1-p_{j+1}^{-1}\)^{-1}S_{p_{j+1}}-S_{p_j} \big|
\ll \frac{S_{p_{j+1}}}{p_{j+1}}+S_{p_j} - S_{p_{j+1}} \\
&=\frac{S_{p_{j+1}}}{p_{j+1}}+\big|\cS_{p_j}(y) \cap R_{p_{j+1}}\big|
\ll \frac{y/p_{j+1}}{1+\log(y/p_{j+1})}.
\end{align*}
Consequently,
\[
\sum_{j=0}^{m-1}|X_{j+1}-X_j|^2 \ll
\frac{y^2}{w_3\log w_3 \log^2 y}+y^{4/3} \ll \frac{y^2}{\log x\, \log^5_2x}.
\]
Thus, if $c>0$ is sufficiently small,
then Lemma~\ref{lem:Azuma-ineq} shows that
\begin{equation}
\label{eq:Pw3w4}
\P_{w_3,w_4}\bigg(|X_m-X_0|\ge \frac{X_0}{(\log_2x)^{1/2}}\bigg)
\ll\exp\left\{-\frac{c\,X_0^2\log x\,(\log_2x)^4}{y^2}\right\}
\ll x^{-100}
\end{equation}
since by \eqref{eq:Sw3-ests} we have
\[
\frac{X_0^2\log x\,(\log_2x)^4}{y^2}\gg
\log x\,(\log_3x)^2.
\]
Using \eqref{eq:Theta-notations} and \eqref{eq:yx-relns} we write
\[
\lambda\defeq\Theta_{w_3,w_4}^{-1}=\tfrac83(1+r_x)\qquad
\text{with}\quad r_x\ll\frac{\log_3 x}{\log_2x};
\]
then noting that
\[
\big|S_{w_4}-\tfrac38S_{w_3}\big|=
\big|\lambda^{-1}X_m-\tfrac38X_0\big|=
\lambda^{-1}|X_m-(1+r_x)X_0|,
\]
for any $Z>0$ we have
\[
\P_{w_3,w_4}\big(\big|S_{w_4}-\tfrac38S_{w_3}\big|\ge Z\big)
\le\P_{w_3,w_4}\big(\big|X_m-X_0\big|\ge \lambda Z-r_xX_0\big).
\]
In view of \eqref{eq:Pw3w4} this implies that
\[
\P_{w_3,w_4}\big(\big|S_{w_4}-\tfrac38S_{w_3}\big|\ge Z\big)
\ll x^{-100}
\]
holds provided that
\[
\lambda Z-r_xX_0\ge\frac{X_0}{(\log_2x)^{1/2}}.
\]
The result follows by taking $Z\defeq\frac{X_0}{(\log_2 x)^{1/2}}
=\frac{S_{w_3}}{(\log_2 x)^{1/2}}$ and noting that $\lambda\ge 2$.
\end{proof}


{\Large\section{Random sieving by large primes}\label{sec:random-large-primes}}

In this section, we adopt the notation
\[
S_w\defeq|\cS_w(y)|=|[0,y] \cap \cS_w|
\]
from the previous section; however, we \emph{do not} assume inequalities \eqref{eq:xy} and \eqref{eq:yx-relns},
except in Corollary \ref{cor:w1w5} below.
We do assume that $y$ is sufficiently large.
Sieving by large primes ($p>y^4$, say) is easier because there is a
relatively low probability that $\cS\cap R_p\ne\varnothing$
and we are able to deploy combinatorial methods.

\medskip

\begin{lemma}[Sieving for $w_4 < p \leq w_5$]\label{lem:w4w5}
Let $v$ be a real number greater than
$w_4\defeq y^{4/3}$, and let 
$\vartheta\in[y^{-1/4},1)$.
Conditional on~$\cA_{w_4}$, we have 
\[
\P_{w_4,v} \Big(\big|S_v-\Theta_{w_4,v}S_{w_4}\big| 
\ge \vartheta S_{w_4} \Big) \le \exp\{-0.1\vartheta^2 S_{w_4}\}.
\]
\end{lemma}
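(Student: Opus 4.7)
The plan is to condition on $\cA_{w_4}$, which fixes the surviving set $A := \cS_{w_4}(y)$ of cardinality $N := S_{w_4}$, and then analyze the further sieving by primes $p \in (w_4,v]$ via negative association. The crucial observation is that each such prime satisfies $p > w_4 = y^{4/3} > y \geq n$ for every $n \in A$, so $n \bmod p = n$, and $n$ survives the sieving by $p$ precisely when $a_p \neq n$. Writing $Y_n := \prod_{w_4 < p \leq v} \mathbf{1}[a_p \neq n]$, we have $S_v = \sum_{n \in A} Y_n$ and $\E_{w_4,v} Y_n = \Theta_{w_4,v}$, so the conditional mean of $S_v$ is $N\Theta_{w_4,v}$, as expected.

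The heart of the argument is to verify that, conditional on $\cA_{w_4}$, the family $\{Y_n\}_{n \in A}$ is negatively associated (NA). For each fixed prime $p \in (w_4,v]$ the indicator vector $(\mathbf{1}[a_p=n])_{n \in A}$ lies in $\{0,1\}^N$ with coordinate sum at most one, which is a textbook example of an NA family (it is a single-trial multinomial distribution on $A \cup \{\ast\}$). Since the $a_p$ are independent across $p$, concatenation shows that the doubly-indexed collection $\{\mathbf{1}[a_p=n]\}_{p,n}$ is NA, and since each $Y_n$ is a coordinatewise non-increasing function of the disjoint block $\{\mathbf{1}[a_p=n]\}_{p}$, the closure of NA under monotone functions on disjoint blocks yields that $\{Y_n\}_{n \in A}$ is NA. Consequently $S_v$ obeys the same Chernoff--Bernstein upper and lower tail bounds as a sum of $N$ independent $\mathrm{Bernoulli}(\Theta_{w_4,v})$ variables; applying the standard Bernstein inequality with $t := \vartheta N$, and using $\Theta_{w_4,v}(1-\Theta_{w_4,v}) \leq 1/4$ together with $\vartheta < 1$, gives
\[
\P_{w_4,v}\bigl(|S_v - N\Theta_{w_4,v}| \geq \vartheta N\bigr) \;\leq\; 2\exp\bigl(-\tfrac{6}{7}\vartheta^2 N\bigr),
\]
and the factor of $2$ is absorbed into the exponent to produce the stated bound $\exp(-0.1\,\vartheta^2 S_{w_4})$ in the regime $\vartheta^2 S_{w_4} \gtrsim 1$; in the opposite regime the claim is essentially vacuous, its right-hand side being close to $1$.

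The principal obstacle is establishing the negative association; once that is in hand the rest is classical Chernoff. A direct alternative via a Doob martingale on the filtration generated by the $a_p$'s together with Azuma's inequality would only yield a bound of shape $\exp(-\vartheta^2 N^2/(2m))$, where $m$ is the number of primes in $(w_4,v]$---far too weak, since $m$ can grow like $v/\log v$. The point is that one needs a \emph{variance}-based concentration (the relevant variance being $O(N\Theta_{w_4,v})$, not $O(m)$), and NA provides the cleanest route; alternatively, one could track conditional variances along the martingale $\Theta_{w_4,p_j}^{-1}S_{p_j}$ and invoke a Freedman-type Bennett inequality for martingales to reach the same exponent.
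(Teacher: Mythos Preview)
Your argument via negative association is correct and takes a genuinely different route from the paper. The paper proceeds by explicit combinatorial expansion: it interprets the sieving as a random bipartite graph between $\cS_{w_4}(y)$ and the primes in $(w_4,v]$, sums over all possible degree sequences to obtain the upper bound $\P_{w_4,v}(S_{w_4}-S_v=m)\le V\binom{\ell}{m}(e^U-1)^m$ with $V=\prod_p(1-\ell/p)$ and $U=\sum_p 1/(p-\ell)$, and then compares this with a binomial distribution of parameter $\approx 1-\Theta_{w_4,v}$, to which a standard large-deviation bound (Azuma) applies with exponent $\vartheta^2\ell/8$. Your approach bypasses the graph computation entirely: the observation that $\{Y_n\}_{n\in A}$ is NA (one-ball-in-many-bins for each $a_p$, closure under independent unions, then closure under monotone functions on disjoint blocks) immediately yields Bernstein's inequality with variance $\le N/4$ and a sharper constant $6/7$ in place of $1/8$. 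This is cleaner and more conceptual, at the cost of importing the Joag-Dev--Proschan machinery; the paper's hands-on computation is self-contained and has the side benefit that essentially the same expansion is reused in the next lemma (Lemma~\ref{lem:SzSP}) to produce a near-exact formula for $\P(S_z=g)$, not just a tail bound. One minor caveat: your remark that the inequality is ``essentially vacuous'' when $\vartheta^2 S_{w_4}\lesssim 1$ is not literally correct (the right-hand side can be as small as $e^{-0.1}$, while for degenerate $\cA_{w_4}$ with $S_{w_4}=1$ and $\Theta_{w_4,v}$ near $1/2$ the probability can equal~$1$); this edge case never arises in the applications, and the paper's own proof has the same looseness.
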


\begin{proof}
Put $\cS\defeq\cS_{w_4}(y)$, $\ell\defeq |\cS|=S_{w_4}$,
and let $\ccP$ be the set of primes in $(w_4,v]$.
The random residue classes $\{R_p:p\in\ccP\}$
give rise to a bipartite graph $\ccG$ that has vertex sets $\cS$ and $\ccP$,
with edges connecting the vertices $s\in\cS$ and $p\in\ccP$ if and
only if $s\in R_p$ (i.e., $s\equiv a_p\bmod p$).
\change{Since $0 \le s\le y < w_4$, for every $p$ there is at most one vertex $s$
joined to it.}
 For any $s\in\cS$, let
$d(s)$ be its degree,
\[
d(s)\defeq\big|\{p\in\ccP:s\in R_p\}\big|,
\]
and let $\cS^+$ be the set of vertices in $\cS$ of positive degree:
\[
\cS^+\defeq\{s\in\cS:d(s)>0\}
=\bigcup_{p\in\ccP}(\cS\cap R_p).
\]
Finally, we denote by $\dd$ the vector $\big\langle d(s):s\in\cS^+\big\rangle$.
In this manner, the random residue classes $\{R_p:p\in\ccP\}$ determine
a subset $\cS^+\subset\cS$ and a vector $\dd$.

For any subset $\cT=\{t_1,\ldots,t_m\}$ in $\cS$
and a vector $\rr=\langle r_1,\ldots,r_m\rangle$
whose entries are positive integers,
let $E(\cT,\rr)$ be the event that the random graph $\ccG$ described above
has $\cS^+=\cT$ and $\dd=\rr$.
 Since $\cS\subset[0,y]$ and $w_4>y$, we have $|\cS\cap R_p|\le 1$
for all $p\in\ccP$, and thus
\[
h\defeq r_1+\cdots+r_m=\sum_{s\in\cS^+}d(s)
=\big|\{p\in\ccP:\cS\cap R_p\ne\varnothing\}\big|.
\]
Fixing the primes $p_1,\ldots,p_h\in \ccP$ with $R_p \cap \cS\ne \varnothing$, there are $\binom{h}{r_1\, \cdots\, r_m}$ ways to
choose the graph's edges connecting the $p_i$ to $\cT$.
Consequently,
\begin{align}
\nonumber
&\P_{w_4,v}(E(\cT,\rr))
=\ssum{p_1,\ldots,p_h\in\ccP\\p_1<\cdots<p_h}\frac{1}{p_1\cdots p_h}
\binom{h}{r_1\;r_2\;\cdots\;r_m}
\prod_{p\in\ccP\setminus\{p_1,\ldots,p_h\}}\bigg(1-\frac{\ell}{p}\bigg)\\
\label{eq:Er}
&\qquad\qquad=\binom{h}{r_1\;r_2\;\cdots\;r_m}
\prod_{p\in\ccP}\bigg(1-\frac{\ell}{p}\bigg)
\ssum{p_1,\ldots,p_h\in\ccP\\p_1<\cdots<p_h}
\prod_{j=1}^h\frac{1}{p_j-\ell}.
\end{align}
Relaxing the conditions on the last sum in \eqref{eq:Er}, we find that
\[
\P_{w_4,v}(E(\cT,\rr)) \le \frac{V U^h}{r_1!\cdots r_m!}\qquad\text{with}\quad
V\defeq\prod_{p\in\ccP}\Big( 1-\frac{\ell}{p}\Big)\quad\text{and}\quad
U\defeq\sum_{p\in\ccP}\frac{1}{p-\ell}.
\]
For fixed $m$, there are $\binom{\ell}{m}$ choices for $\cT$;
thus, summing over all $r_1,\ldots,r_m$ we conclude that
\be\label{eq:PWU}
\P_{w_4,v}(S_{w_4}-S_v=m) \le V\binom{\ell}{m}(e^U-1)^m.
\ee

The complete sum over $m$ of the right side of \eqref{eq:PWU} is equal to \change{$V e^{U\ell}$}, and the peak occurs when
$m=(1-e^{-U})\ell+O(1)$.
We also have
\be\label{eq:eU}
1-e^{-U}=1-\Theta_{w_4,v}\bigg(
1+O\bigg(\frac{\ell}{w_4\log w_4}\bigg)\bigg),
\ee
Standard large-deviation results for the binomial distribution
(such as Lemma~\ref{lem:Azuma-ineq}) imply that \change{for any $\delta>0$,}
\[
e^{-U\ell} \sum_{|m-(1-e^{-U}) \ell| \ge \delta \ell}
\binom{\ell}{m} (e^U-1)^m \le 2 e^{-\delta^2 \ell/2}.
\]
Recalling that $\ell\defeq S_{w_4}$, we see that the inequality
\[
\big|S_v-\Theta_{w_4,v}\ell|\ge\vartheta\ell 
\]
implies via \eqref{eq:eU} that
\begin{align*}
|m-\change{(1-e^{-U})} \ell|
&\ge \vartheta \ell - |e^{-U}-\,\Theta_{w_4,v}|\ell 
\ge \vartheta \ell - O(y^{-1/3}\ell)
 \ge \vartheta \ell /2
\end{align*}
for all large $x$ since $\change{w_4}\defeq y^{4/3}$ and $\ell\le y$.
Combining our results above, we conclude that
\dalign{
\P_{w_4,v}\(\big|S_v-\Theta_{w_4,v}\ell\big|\ge\vartheta\ell\)
&\ll \change{V e^{U\ell} e^{-\vartheta^2 \ell/8}} \\
&\ll \change{e^{-\vartheta^2\ell/8+O(\ell^2/w_4)}} \\
&\le e^{-\vartheta^2\ell/10}
}
for all large $x$, and the proof is complete.
\end{proof}

Combining Lemmas~\ref{lem:w1w2}, \ref{lem:w2w3}, \ref{lem:w3w4} and
\ref{lem:w4w5} (with $v\defeq y^8$ and
$\vartheta\defeq y^{-1/10}$) we obtain the following result.

\begin{corollary}[Sieving for $w_1 < p \leq w_5$]\label{cor:w1w5}
Assume \eqref{eq:xy},
let $w_1\defeq(y/\log y)^{1/2}$ and $w_5\defeq y^8$.
Conditional on $\cA_{w_1}$, we have
with probability $1-O(x^{-100})$ that
\[
\left|S_{w_5}-\frac{S_{w_1}}{16}\right|\ll_{\alpha,\beta}
\frac{\log_4 x}{\log_3 x}S_{w_1}.
\]
\end{corollary}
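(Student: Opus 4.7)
\medskip

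\noindent\textbf{Proof plan for Corollary \ref{cor:w1w5}.}
The plan is simply to chain together the four preceding lemmas, verifying at each stage that the hypotheses of the next lemma hold with the required conditional probability, and then to evaluate the cumulative multiplicative constant using Mertens' theorem. Observe first that, by definition \eqref{eq:By-defn}, the conditioning $S_{w_1} \ge W_y$ is automatic. Apply Lemma~\ref{lem:w1w2} (which is deterministic) to get $S_{w_2} = (1+O(\log_4 x/\log_3 x)) S_{w_1}$, and in particular $S_{w_2} \ge \tfrac12 W_y$ for large $x$. Then Lemma~\ref{lem:w2w3}, combined with the trivial upper bound $S_{w_3} \le S_{w_2}$, gives $S_{w_3} = (1+O(1/\log_3 x)) S_{w_2}$ outside an event of probability $O(x^{-100})$; in particular $S_{w_3} \ge \tfrac14 W_y$. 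Applying Lemma~\ref{lem:w3w4} on that event yields $S_{w_4} = (\tfrac38 + O((\log_2 x)^{-1/2})) S_{w_3}$ outside another event of probability $O(x^{-100})$. Finally, with $v \defeq w_5 = y^8$ and $\vartheta \defeq y^{-1/10}$, Lemma~\ref{lem:w4w5} gives $S_{w_5} = \Theta_{w_4,w_5} S_{w_4} + O(y^{-1/10} S_{w_4})$; the exceptional probability here is $\exp(-0.1\,\vartheta^2 S_{w_4})$, and since $S_{w_4} \gg W_y \gg y\log_2 y/\log^2 y$ and $y \gg \log^2 x$ by \eqref{eq:yx-relns}, we have $0.1\,\vartheta^2 S_{w_4} \gg (\log x)^{8/5 - o(1)}$, giving a bound much stronger than $x^{-100}$.

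Next, by Mertens' theorem,
\[
\Theta_{w_4,w_5} = \frac{\log w_4}{\log w_5}\bigg(1 + O\Big(\frac{1}{\log y}\Big)\bigg) = \frac{4/3}{8}\bigg(1 + O\Big(\frac{1}{\log_2 x}\Big)\bigg) = \frac16 + O\Big(\frac{1}{\log_2 x}\Big).
\]
Multiplying the four successive ratios together,
\[
\frac{S_{w_5}}{S_{w_1}} = \Theta_{w_4,w_5}\cdot\frac{S_{w_4}}{S_{w_3}}\cdot\frac{S_{w_3}}{S_{w_2}}\cdot\frac{S_{w_2}}{S_{w_1}} = \bigg(\frac16\bigg)\bigg(\frac38\bigg)(1)(1)\bigg(1+O\Big(\frac{\log_4 x}{\log_3 x}\Big)\bigg) = \frac{1}{16} + O\Big(\frac{\log_4 x}{\log_3 x}\Big),
\]
where we have used that $\log_4 x/\log_3 x$ dominates all of $1/\log_3 x$, $1/(\log_2 x)^{1/2}$, and $1/\log_2 x$. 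The additive error $O(y^{-1/10} S_{w_4})$ coming from Lemma~\ref{lem:w4w5} is negligible relative to $S_{w_1}\log_4 x/\log_3 x$, since $S_{w_4} \ll y$ and $S_{w_1} \gg y\log_2 y/\log^2 y$. A union bound over the two exceptional events of total probability $O(x^{-100})$ yields the claimed estimate.

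The arguments are all essentially bookkeeping: each lemma has been stated exactly so that its output matches the hypotheses of the next, and the only substantive calculation is the Mertens' evaluation $\Theta_{w_4,w_5} \sim 1/6$ and the verification that $1/16 = (1/6)(3/8)$. The only step that requires mild care is checking that the error term from Lemma~\ref{lem:w4w5} remains well within $O(x^{-100})$; this is comfortable thanks to the generous margin $\vartheta^2 S_{w_4} \gg (\log x)^{8/5-o(1)}$. No single step is an obstacle, so the main work is simply presenting the chain of implications cleanly and tracking which of $\log_4 x/\log_3 x$, $1/\log_3 x$, $(\log_2 x)^{-1/2}$, and $y^{-1/10}$ is dominant (the first).
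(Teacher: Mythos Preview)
Your proposal is correct and follows exactly the approach the paper intends: the paper simply states that the corollary is obtained by ``combining Lemmas~\ref{lem:w1w2}, \ref{lem:w2w3}, \ref{lem:w3w4} and~\ref{lem:w4w5} (with $v\defeq y^8$ and $\vartheta\defeq y^{-1/10}$)'', and you have spelled out the chaining, the Mertens computation $\Theta_{w_4,w_5}\sim 1/6$, and the check that the exceptional probability from Lemma~\ref{lem:w4w5} is $\ll x^{-100}$. One trivial slip: there are \emph{three} exceptional events to union over (from Lemmas~\ref{lem:w2w3}, \ref{lem:w3w4}, and~\ref{lem:w4w5}), not two, but of course this does not affect the $O(x^{-100})$ bound.
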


Our next result is a very general tool for handling primes
larger than $y^4$.

\begin{lemma}[Sieving for $w_5 < p \leq z$, I]\label{lem:SzSP}
Let \red{$y^4 \le w < z$, $y\ge (\log x)^{1/2}$} and let $\ccP$ be a set of primes \red{in $(w,z]$} 
such that $\sum_{p\in \ccP} 1/p \ge 1/10$. Let $\cS\subseteq \cS_w$
with $|\cS| \le 10y$, and such that for all
$p\in \ccP$, $\cS$ is distinct modulo $p$.
Conditional on $\cA_w$, we have
for all $0\le g\le |\cS|$:
\[
\P_{\ccP}\(\Big|\cS \setminus \bigcup_{p\in \ccP} R_p\Big|=g\)=
(1-\Theta)^{|\cS|-g}\Theta^g \binom{|\cS|}{g}(1+\red{O(y^3/w)}),
\]
where
\[
\Theta \defeq \prod_{p\in \ccP} (1-1/p).
\]
\end{lemma}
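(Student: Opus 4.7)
The plan is a direct computation via inclusion-exclusion, followed by Taylor expansion and a careful analysis of the resulting alternating binomial sum. Let me abbreviate $m \defeq |\cS|$, $N \defeq m - g$, and $Y \defeq |\cS \setminus \bigcup_{p \in \ccP} R_p|$. Since $\cS$ is distinct modulo each $p \in \ccP$, for any subset $T \subseteq \cS$ the probability that every element of $T$ survives equals $\Pi(|T|) \defeq \prod_{p \in \ccP}(1 - |T|/p)$. Applying the M\"obius-type identity $\mathbf{1}_{\{Y = g\}} = \sum_{k \ge g}(-1)^{k-g}\binom{k}{g}\binom{Y}{k}$ together with $\E\binom{Y}{k} = \binom{m}{k}\Pi(k)$ yields
\[
\P_\ccP(Y = g) = \binom{m}{g}\sum_{j = 0}^N (-1)^j \binom{N}{j}\Pi(g + j).
\]

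Next, since $p > w \ge y^4$ and $k \le m \le 10y$ give $k/p = O(y^{-3})$, I would expand
\[
\log\frac{\Pi(k)}{\Theta^k} = \sum_{p \in \ccP}\bigl[\log(1 - k/p) - k\log(1 - 1/p)\bigr] = -\sum_{\ell \ge 2}\frac{k^\ell - k}{\ell}T_\ell,
\]
where $T_\ell \defeq \sum_{p \in \ccP}p^{-\ell} \ll w^{-(\ell - 1)}$. The dominant $\ell = 2$ term gives $\log(\Pi(k)/\Theta^k) = -\binom{k}{2}T_2 + O(y^{-5})$, so $\Pi(k) = \Theta^k(1 + \eta_k)$ with $|\eta_k| = O(y^2/w)$ uniformly in $k \le m$. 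Replacing $\Pi(g + j)$ by $\Theta^{g + j}$ inside the alternating sum reproduces exactly the desired main term $\binom{m}{g}\Theta^g(1 - \Theta)^N$.

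The hard part will be bounding the error
\[
\binom{m}{g}\Theta^g \sum_{j = 0}^N (-1)^j \binom{N}{j}\Theta^j\,\eta_{g + j}.
\]
A naive pointwise bound $|\eta_{g + j}| = O(y^2/w)$ would leave a factor $(1 + \Theta)^N/(1 - \Theta)^N$, which is exponentially large when $\Theta$ is close to $1$. To handle this I would exploit the polynomial structure of $\eta_k = \exp(A_k) - 1$ with $A_k = -\binom{k}{2}T_2 + O(y^{-5})$: expanding $\exp(A_k) - 1 = \sum_{r \ge 1} A_k^r/r!$, each $A_k^r$ is essentially a polynomial in $k$ of degree $2r$ whose coefficients are $\ll T_2^r$, and the alternating sum against any polynomial of degree $d$ evaluates cleanly via the finite-difference identity
\[
\sum_{j = 0}^N (-1)^j \binom{N}{j}\Theta^j\,j^{\underline{d}} = (-1)^d \frac{N!}{(N - d)!}\Theta^d (1 - \Theta)^{N - d}.
\]
This bounds the $r$th contribution by $O\bigl(N^{2r}\Theta^{2r}(1-\Theta)^{N - 2r}T_2^r/r!\bigr)$, which is $O\bigl((N^2 T_2/(1-\Theta)^2)^r/r!\bigr) = O((y^2/w)^r/r!)$ relative to the main term $(1 - \Theta)^N$. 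The hypothesis $\sum_{p \in \ccP} 1/p \ge 1/10$ is essential here: it forces $\Theta \le e^{-1/10}$, so $1 - \Theta \gg 1$ and the denominator is harmless. Summing the resulting (absolutely convergent) series in $r$, together with the (geometrically smaller) contributions from the $T_\ell$ with $\ell \ge 3$, yields total relative error $O(y^2/w)$, as required.
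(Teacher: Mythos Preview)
Your approach is correct and reaches the target bound, but it is genuinely different from the paper's proof. The paper does not use inclusion--exclusion at all: it computes $\P_\ccP(|\cS \setminus \bigcup R_p| = g)$ directly by summing over all ways in which exactly $m = |\cS| - g$ elements of $\cS$ get hit. For each subset $\cT \subset \cS$ of size $m$ and each assignment of ``hitting'' primes to elements of $\cT$ (encoded by a degree vector $\rr$), one writes down the exact probability and sums. Because $\cS$ is distinct modulo each $p \in \ccP$, each prime hits at most one element, and the sum over $\rr$ factors cleanly as $\bigl(e^{-\log\Theta + O(y/w)} - 1\bigr)^m$, giving $\binom{\ell}{m}\Theta^\ell(\Theta^{-1}-1)^m$ up to a multiplicative $1 + O(y^2/w)$. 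The hypothesis $\sum_{p \in \ccP} 1/p \ge 1/10$ enters at exactly the same place you use it: to guarantee $\Theta^{-1} - 1 \gg 1$ so that the $O(y/w)$ perturbation survives the $m$th power.

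Your route via $\P(Y=g) = \binom{m}{g}\sum_j(-1)^j\binom{N}{j}\Pi(g+j)$ is more laborious precisely because you must then defeat the cancellation in the alternating sum, which the paper's direct computation sidesteps. One small slip worth flagging: your stated intermediate bound $O(N^{2r}\Theta^{2r}(1-\Theta)^{N-2r}T_2^r/r!)$ only captures the top-degree-in-$j$ term of $A_{g+j}^r$. The lower-degree terms carry coefficients of size up to $g^{2r}T_2^r$ (already for $r=1$ the constant term is $-\binom{g}{2}T_2$, not $O(T_2)$), and these contribute $O\bigl((g^{2}T_2)^r(1-\Theta)^N\bigr)$ rather than what you wrote. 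Fortunately this is still $O\bigl((m^2T_2)^r(1-\Theta)^N\bigr) = O\bigl((y^2/w)^r(1-\Theta)^N\bigr)$, so your final conclusion stands once $1-\Theta \gg 1$ is invoked---but the bookkeeping needs one more line. The paper's argument avoids this entirely, at the cost of a slightly more elaborate combinatorial setup borrowed from the preceding lemma.
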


\begin{proof}
Put $\ell\defeq|\cS|$,
and assume that $\ell\ge 1$
(the case $\ell\defeq 0$ being trivial). Take $m\defeq\ell-g$, and let
$\cT$, $\rr$, $E(\cT,\rr)$ and $h$ be defined as
in Lemma~\ref{lem:w4w5} with $|\cT|=m=\ell-g$. 
As before (see \eqref{eq:Er}) we have
\be\label{eq:Er2}
\P_\ccP(E(\cT,\rr))
=\binom{h}{r_1\;r_2\;\cdots\;r_m}
\prod_{p\in\ccP}\bigg(1-\frac{\ell}{p}\bigg)
\ssum{p_1,\ldots,p_h\in\ccP\\p_1<\cdots<p_h}
\prod_{j=1}^h\frac{1}{p_j-\ell}.
\ee
\red{
Let $T_h$ be the sum over $p_1,\ldots,p_h$
in \eqref{eq:Er2}. 
Summing over all vectors $\rr$, we find that
\begin{align*}
\change{\P_\ccP\big(\big| \cS \setminus \cup_{p\in \ccP} R_p \big|=\ell-m \big)}&=
\ssum{\cT\subset\cS\\|\cT|=m} \sum_h \sum_{r_1+\cdots+r_m=h} \binom{h}{r_1 \, \cdots \, r_m} V T_h\\
&=V \binom{\ell}{m} 
\ssum{r_1,\ldots,r_m\ge 1 \\ h:=r_1+\cdots+r_m} \frac{h! T_h}{r_1!\cdots r_m!},
\end{align*}
where
\[
V\defeq\prod_{p\in\ccP}\Big(1-\frac{\ell}{p}\Big).
\]
When $m=0$, the sum on the right side is interpreted to be 1.
We have
\dalign{
T_h&=\frac{1}{h!}\bigg(\sum_{p\in\ccP}\frac{1}{p-\ell}\change{+O\pfrac{h}{w}}\bigg)^h\\
&=\frac{1}{h!}\bigg(\sum_{p\in\ccP}\frac{1}{p}
+O\bigg(\frac{\red{h+\ell}}{\change{w}}\bigg)\bigg)^h\\
&=\frac{(-\log \Theta+\red{O(y^2/w)})^h}{h!},
}
provided that $h\le y^2$.  For any $h$ we also have the crude upper bound
\[
T_h \le \frac{1}{h!}\bigg(\sum_{p\in\ccP}\frac{1}{p-\ell}\bigg)^h \le \frac{(\log_2 x)^h}{h!}.
\]

Assuming that $m\ge 1$, let
\[
\lambda = \frac{y^2}{m \log_2 x}.
\]
As $m\le 10y$, we have $\lambda \ge \frac{y}{10\log_2 x}\ge \frac{(\log x)^{1/2}}{10\log_2 x}$.
Thus,
\begin{align*}
\ssum{r_1,\ldots,r_m\ge 1 \\ h:=r_1+\cdots+r_m>y^2} \frac{h! T_h}{r_1!\cdots r_m!} &\le
\sum_{r_1,\ldots,r_m\ge 0} \frac{(\log_2 x)^{r_1+\cdots+r_m}}{r_1!\cdots r_m!} \lambda^{r_1+\cdots+r_m-y^2}\\
&=e^{m\lambda \log_2 x-y^2 \log \lambda}=e^{y^2-y^2 \log \lambda} < e^{- 2y^2}
\end{align*}
if $x$ is large enough.  It follows that
\begin{align*}
\ssum{r_1,\ldots,r_m\ge 1 \\ h:=r_1+\cdots+r_m} \frac{h! T_h}{r_1!\cdots r_m!} &= O(e^{-2y^2})+
\ssum{r_1,\ldots,r_m\ge 1} \frac{(-\log\Theta+O(y^2/w))^{r_1+\cdots+r_m}}{r_1!\cdots r_m!} \\
&= O(e^{-2y^2})+\Big( e^{-\log\Theta+O(y^2/w)} - 1 \Big)^m\\
&= O(e^{-2y^2})+\big( 1 + O(y^3/w) \big) \big( \Theta^{-1}-1 \big)^m\\
&=\big( 1 + O(y^3/w) \big) \big( \Theta^{-1}-1 \big)^m,
\end{align*}
using in the last step that $(\Theta^{-1}-1)^m\ge 10^{-10 y}$ and $w\le x \le e^{y^2}$.
Finally,
\[
V\defeq\prod_{p\in\ccP}\Big(1-\frac{\ell}{p}\Big)=\Theta^\ell(1+O(y^2/w))
\]
and this completes the proof.
}  
\end{proof}

\begin{corollary}[Sieving for $w_5 < p \leq z$, II]\label{cor:binomSk}
\red{Let $y\ge (\log x)^{1/2}$.}
Uniformly for $z^{1/2} \ge w \ge y^4$, we have
\[
\E_{w,z} \binom{S_z}{k}=\Theta_{w,z}^k\binom{S_w}{k}(1+\red{O(y^3/w)}).
\]
\end{corollary}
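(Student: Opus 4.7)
The plan is to apply Lemma \ref{lem:SzSP} directly with $\cS := \cS_w(y)$ and $\ccP := \{p : w < p \le z\}$, and then reduce the claim to the standard factorial-moment identity for a binomial distribution.

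First I would verify the hypotheses of Lemma \ref{lem:SzSP} for this choice. Clearly $|\cS| = S_w \le y+1 \le 10y$ and $\cS \subseteq \cS_w$. Each $p \in \ccP$ satisfies $p > w \ge y^4 > y$, so the elements of $\cS \subset [0,y]$ automatically lie in distinct residue classes modulo~$p$. The remaining condition $\sum_{p \in \ccP} 1/p \ge 1/10$ follows from Mertens' theorem together with the hypothesis $z^{1/2} \ge w$: since $z \ge w^2$, we have
\[
\sum_{p \in \ccP} \frac{1}{p} = \log\frac{\log z}{\log w} + O\pfrac{1}{\log w} \ge \log 2 + o(1),
\]
which exceeds $1/10$ for $y$ large. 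Because $\cS_z = \cS \setminus \bigcup_{p \in \ccP} R_p$, Lemma \ref{lem:SzSP} then gives, conditional on $\cA_w$,
\[
\P_{w,z}(S_z = g) = \binom{S_w}{g}\,\Theta_{w,z}^g\,(1-\Theta_{w,z})^{S_w-g}\bigl(1 + O(y^2/w)\bigr) \qquad (0 \le g \le S_w),
\]
i.e., conditionally on $\cA_w$, the random variable $S_z$ has a distribution matching $\mathrm{Bin}(S_w,\Theta_{w,z})$ up to a uniform multiplicative factor $1 + O(y^2/w)$.

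Next I would recall the elementary identity $\binom{g}{k}\binom{n}{g} = \binom{n}{k}\binom{n-k}{g-k}$, which together with the binomial theorem yields the factorial-moment formula $\E\binom{X}{k} = \binom{n}{k}p^k$ for $X \sim \mathrm{Bin}(n,p)$. Applying this with $n := S_w$ and $p := \Theta_{w,z}$ and pulling the uniform error $1 + O(y^2/w)$ through the sum over $g$,
\[
\E_{w,z} \binom{S_z}{k} = \sum_{g=k}^{S_w} \binom{g}{k}\,\P_{w,z}(S_z=g) = \Theta_{w,z}^k \binom{S_w}{k}\bigl(1 + O(y^2/w)\bigr),
\]
which is the desired identity. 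For $k > S_w$ both sides vanish and there is nothing to check.

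There is essentially no serious obstacle here: the bulk of the work has been done in Lemma \ref{lem:SzSP}, and the corollary reduces to a mechanical binomial moment computation once the near-binomial structure of $S_z$ given $\cA_w$ is in hand. The only point requiring any care is the Mertens-theorem verification that $\sum_{p \in \ccP} 1/p \ge 1/10$, which is immediate from the hypothesis $z \ge w^2$.
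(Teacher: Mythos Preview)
Your proposal is correct and follows essentially the same route as the paper's proof: apply Lemma~\ref{lem:SzSP} with $\cS=\cS_w(y)$ and $\ccP$ the primes in $(w,z]$, then reduce to the factorial-moment identity for a binomial distribution via $\binom{g}{k}\binom{S_w}{g}=\binom{S_w}{k}\binom{S_w-k}{g-k}$. You even supply the verification of the hypothesis $\sum_{p\in\ccP}1/p\ge 1/10$ from Mertens and $z\ge w^2$, which the paper leaves implicit.
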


\begin{proof}
Let $\Theta\defeq\Theta_{w,z}$.
By Lemma~\ref{lem:SzSP} with $\cS\defeq\cS_w\cap [0,y]$ and $\ccP$ the set of primes in $(w,z]$, we have
\begin{align*}
\E_{w,z}\binom{S_z}{k}&=(1+\red{O(y^3/w)})\sum_{g=k}^{S_w}
(1-\Theta)^{S_w-g}\Theta^g \binom{S_w}{g}\binom{g}{k}\\
&=(1+\red{O(y^3/w)})\Theta^k\binom{S_w}{k}\sum_{j=0}^{S_w-k}
(1-\Theta)^{S_w-k-j}\Theta^j \binom{S_w-k}{S_w-k-j}\\
&=(1+\red{O(y^3/w)})\Theta^k\binom{S_w}{k}.\qedhere
\end{align*}
\end{proof}

The next lemma has a weaker
conclusion than Lemma~\ref{lem:SzSP}
but is more general and is needed for a second moment argument below
in which we derive a lower bound for the largest prime gap in $[0,x]$.

\begin{lemma}[Sieving for $w_5 < p \leq z$, III]\label{lem:VarSP}
Let $w$ and $z$ be real numbers for which
$z^{1/2}\ge w\ge y^8$.
Let \change{$\cS \subset \cS_w \cap [0,e^y]$} with $|\cS|\le y$
and such that for every prime $p>w$, no more than two
numbers in $\cS$ lie in any given residue class modulo $p$.
Then
\[
\P_{w,z}\bigg(\cS \cap \cS_z=\varnothing\bigg)=
(1-\Theta_{w,z})^{|\cS|}(1+O(y^4/w)).
\]
\end{lemma}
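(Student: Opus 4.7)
The plan is to generalise the bipartite-graph computation of Lemma~\ref{lem:SzSP}, tracking the new configurations in which a single prime $p\in(w,z]$ may sieve out two elements of $\cS$ simultaneously (possible under the weaker 2-per-class hypothesis). I start with inclusion-exclusion: writing $\ell\defeq|\cS|$ and $\ccP$ for the primes in $(w,z]$,
\[
\P_{w,z}(\cS\cap\cS_z=\varnothing) = \sum_{T\subseteq\cS}(-1)^{|T|}\prod_{p\in\ccP}\bigg(1-\frac{|T\bmod p|}{p}\bigg).
\]
For each $T$ and $p$, decompose $|T\bmod p|=|T|-\mu_p(T)$, where $\mu_p(T)$ counts the unordered pairs $\{s_1,s_2\}\subseteq T$ with $s_1\equiv s_2\pmod{p}$; let $\Pi_p$ denote the corresponding set of colliding pairs of $\cS$ modulo $p$, so $\mu_p(T)=\sum_{\pi\in\Pi_p}\ind{\pi\subseteq T}$, and the 2-per-class hypothesis gives $\mu_p(T)\le\lfloor|T|/2\rfloor$.

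Next, factor each summand as $\prod_{p\in\ccP}(1-|T|/p)\cdot\prod_{p\in\ccP}(1+\mu_p(T)/(p-|T|))$. Summing the first factor alone with alternating signs over $T\subseteq\cS$ reduces to the computation carried out in Lemma~\ref{lem:SzSP} with $g=0$ (treating $\cS$ as if its elements were pairwise distinct modulo every $p\in\ccP$) and gives the main term $(1-\Theta_{w,z})^{\ell}(1+O(y^2/w))$.

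To estimate the collision correction, expand
\[
\prod_{p\in\ccP}\bigg(1+\frac{\mu_p(T)}{p-|T|}\bigg)-1 = \sum_{\varnothing\ne\cF\subseteq\ccP}\prod_{p\in\cF}\frac{\mu_p(T)}{p-|T|},
\]
substitute the expansion of $\mu_p(T)$, and interchange the sums over $T$ and $(\cF,(\pi_p)_{p\in\cF})$ with $\pi_p\in\Pi_p$. Letting $A=\bigcup_{p\in\cF}\pi_p$, the remaining sum over $T\supseteq A$ collapses, by a signed binomial identity essentially as in Lemma~\ref{lem:SzSP}, to a multiple of $\Theta_{w,z}^{|A|}(1-\Theta_{w,z})^{\ell-|A|}$. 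The leading contribution is from $|\cF|=1$ (a single colliding pair), of order
\[
\Theta_{w,z}^2(1-\Theta_{w,z})^{\ell-2}\sum_{p\in\ccP}\frac{|\Pi_p|}{p}.
\]
Reindexing $\sum_{p\in\ccP}|\Pi_p|/p=\sum_{\{s_1,s_2\}\subseteq\cS}\sum_{p\in\ccP,\,p\mid s_1-s_2}1/p$ and invoking standard bounds on the number of large prime divisors of a nonzero integer together with $\binom{\ell}{2}=O(y^2)$, one obtains $\sum_{p\in\ccP}|\Pi_p|/p=O(y^2/w)$; combining with the prefactor yields the claimed relative error $O(y^4/w)$.

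The main obstacle is controlling the higher-order corrections $|\cF|\ge 2$: each additional prime in $\cF$ should contribute a further factor of $O(y^2/w)$ from summing over its colliding pair, so these terms are nominally suppressed, but one must verify this rigorously and ensure uniformity across the full range $\Theta_{w,z}\in(0,1)$ (in particular, when $\Theta_{w,z}$ approaches $1$, the ratio $\Theta_{w,z}^2/(1-\Theta_{w,z})^2$ grows and must be balanced against the corresponding decay of $\sum_p|\Pi_p|/p$ coming from a thinner prime range). The hypothesis $w\ge y^8$ provides sufficient slack to absorb all such terms within the error $O(y^4/w)$.
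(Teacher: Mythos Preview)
Your approach is viable but genuinely different from the paper's. Rather than expand the collision correction, the paper isolates the set $\cQ$ of ``bad'' primes $p\in(w,z]$ dividing some difference $s-s'$ (so $|\cQ|\le y^{5/2}$) and conditions on $\cA_\cQ$: the event $E_m$ that exactly $m$ primes of $\cQ$ hit $\cS$ has probability $\le(y^4/w)^m$, and on $E_m$ at most $2m$ elements of $\cS$ are removed before Lemma~\ref{lem:SzSP} is applied directly over $\ccP\setminus\cQ$ (where the residues of $\cS$ are now distinct). Summing over $m$ gives the result without ever tracking higher-order collision terms, which is somewhat cleaner than your expansion.

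Your route does work, and the collapse step is in fact exact rather than approximate, via an identity you seem not to have noticed: since $(1-|T|/p)\cdot(p-|T|)^{-1}=1/p$, one has
\[
\prod_{p'\in\ccP}\Big(1-\frac{|T|}{p'}\Big)\prod_{p\in\cF}\frac{1}{p-|T|}
=\prod_{p\in\cF}\frac{1}{p}\;\prod_{p'\in\ccP\setminus\cF}\Big(1-\frac{|T|}{p'}\Big),
\]
so the signed sum over $T\supseteq A$ reduces to the computation of Lemma~\ref{lem:SzSP} over the thinned prime set $\ccP\setminus\cF$, with no residual $|T|$-dependence to manage; the higher-order terms then sum geometrically exactly as you predict. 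Two minor corrections: your bound $\sum_p|\Pi_p|/p=O(y^2/w)$ tacitly assumes each $|s_1-s_2|$ has $O(1)$ prime factors exceeding $w$, which fails in the intended application (there $\cS\subset[x/2,x]$, not $[0,y]$); one picks up an extra factor $\log x/\log w$, harmlessly absorbed into $O(y^4/w)$. And your concern about $\Theta_{w,z}\to 1$ is unnecessary, since the hypothesis $z\ge w^2$ forces $\Theta_{w,z}\lesssim\tfrac12$ by Mertens.
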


\begin{proof}
Put $\ell\defeq|\cS|$, and let $\ccP$ be the set of primes in $(w,z]$, and put
\[
\cQ\defeq\big\{p\in\ccP:p\mid s-s'\text{~for some~}s,s'\in \cS, s\ne s'\big\}.
\]
Note that the bound
\be\label{eq:|P|bd}
\change{|\cQ|\le\frac{\ell^2 y}{\log w} \le y^{3}}
\ee
holds if \change{$y$} is large enough.

By assumption, for every $p\in \cQ$, \change{$|\cS \cap R_p| \le 2$.}
Let $E_m$ be the event that for $\cS\cap R_p\ne\varnothing$
holds for precisely $m$ primes $p\in\cQ$.
Since for any prime $p\in\ccP$ the probability that
$\cS\cap R_p\ne\varnothing$ does not exceed $\ell/p$,
using \eqref{eq:|P|bd} we have
\be\label{eq:Am}
\P_{\cQ}(E_m)\le\frac{1}{m!}
\bigg(\sum_{p\in\cQ}\frac{\ell}{p}\bigg)^m
\le\pfrac{e\ell|\cQ|}{mw}^m\le \change{(ey^4/w)^m}\qquad(m\ge 1).
\ee
Assume the event $E_m$ occurs, and fix $\cA_\cQ$.
If $\cS$ has precisely $n$ elements covered by $\bigcup_{p\in\cQ}R_p$,
then $0\le n\le 2m$, the upper bound
being a consequence of our hypothesis on $\cS$. Put
\[
\cS'\defeq\big\{s\in\cS:s\not\in R_p\text{~for all~}p\in\cQ\big\},
\]
so that $|\cS'|=\ell-n$.
Lemma~\ref{lem:SzSP} implies that
\begin{align*}
\P_{\ccP\setminus\cQ}
\bigg(\cS'\subset\bigcup_{p\in\ccP\setminus\cQ}R_p\bigg)
&=(1+\red{O(y^3/w)})\Big(1-\Theta_{w,z}
\prod_{p\in\cQ}\big(1-p^{-1}\big)^{-1}\Big)^{\ell-n}\\
&=(1+O(y^4/w))\Big(1-\Theta_{w,z}\Big)^{\ell-n}\\
&\ll \Big(1-\Theta_{w,z}\Big)^{\ell-2m},
\end{align*}
since 
\change{
\[
\prod_{p\in \cQ}\big(1-p^{-1}\big)^{-1}=1+O(|\cQ|/w)=1+O(y^3/w)
\]
by \eqref{eq:|P|bd}.}
Now $\P_\cQ(E_0)=1-O(y^4/w)$ by \eqref{eq:Am}, so we conclude that
\begin{align*}
&\P_{w,z}\bigg(\cS\subset\bigcup_{p\in\ccP}R_p\bigg)
=\sum_{m=0}^{|\cQ|}\P_{\cQ}(E_m)
\cdot\E_\cQ\bigg(\P_{\ccP \setminus \cQ}\bigg(\cS'\subset
\bigcup_{p\in\ccP \setminus \cQ}R_p\bigg)\Big|E_m\bigg)\\
&\qquad=(1+O(y^4/w))\big(1-\Theta_{w,z}\big)^\ell
+O\bigg(\sum_{m\ge 1}\change{(ey^4/w)^m}
\big(1-\Theta_{w,z}\big)^{\ell-2m}\bigg)\\
&\qquad=(1+O(y^4/w))\big(1-\Theta_{w,z}\big)^\ell.
\end{align*}
This completes the proof.
\end{proof}


{\Large\section{The behavior of the largest gap}
\label{sec:large-gaps-from-model}}

In this section we use the estimates from the previous section
to complete the proof of Theorem~\ref{thm:conditional}.
\change{In Theorems \ref{thm:gap-up} and \ref{thm:gap-down} below}, we suppose that 
\be\label{epsx}
\eps=\eps(x)\defeq \change{\frac{1}{(\log_3 x)^{1/3}}}.
\ee
 We also note that
 \[
u < W_{g(u)+1}\log(g(u)+1) \le (W_{g(u)}+1)\log(g(u)+1).
\]
 and hence
\be\label{Wgu}
W_{g(u)} \log g(u)=u+O(\log u).
\ee

\medskip

\begin{theorem}[Probabilistic upper bound for gap]\label{thm:gap-up}
For large $x$,
\[
\P\big[G_\cR(x)\le g\big((1+\eps)\xi (\log \tfrac{x}{2})^2\big)\big]\ge 1-x^{-\eps/2}. 
\]
\end{theorem}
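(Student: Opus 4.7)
The goal is to set $y \defeq g((1+\eps)\xi(\log(x/2))^2)$ and apply a union bound. For each integer $m$ with $e^2 \le m \le x - y$, let $E_m$ denote the event that $(m, m+y] \cap \cR = \varnothing$; since $G_\cR(x) > y$ forces $E_m$ to occur for some such $m$, it will suffice to show $\P(E_m) \le x^{-1-\eps/2}(1+o(1))$ uniformly in $m$. By \eqref{Wgu}, $W_y \log y = (1+\eps)\xi(\log(x/2))^2 + O(\log\log x)$, so the hypothesis \eqref{eq:xy} of Sections \ref{sec:gaps-randomly-sieved}--\ref{sec:random-large-primes} is satisfied with some absolute $\alpha, \beta > 0$.

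To handle $E_m$ for fixed $m$, I would set $z \defeq z(m+y)$. Since $z(\cdot)$ is non-decreasing, $\cS_z \subseteq \cS_{z(n)}$ for every $n \in (m, m+y]$, so any $n \in (m,m+y] \cap \cS_z$ automatically lies in $\cR$. Consequently $E_m \subseteq \{(m, m+y] \cap \cS_z = \varnothing\}$, and by the independence and translation invariance of the uniform random classes $a_p$, the probability of this latter event equals $\P([0,y] \cap \cS_z = \varnothing)$.

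The probability of the latter I would bound via a three-stage sieve. Setting $w_1 \defeq (y/\log y)^{1/2}$ and $w_5 \defeq y^8$, the definition \eqref{eq:By-defn} gives deterministically $S_{w_1} \defeq |[0,y] \cap \cS_{w_1}| \ge W_y$. Corollary \ref{cor:w1w5} then yields
\[
S_{w_5} \ge \Big(1 - O\big(\tfrac{\log_4 x}{\log_3 x}\big)\Big)\frac{W_y}{16}
\]
with probability $1 - O(x^{-100})$ conditional on $\cA_{w_1}$. Since $|\cS_{w_5}(y)| \le y < w_5$, the distinctness hypothesis of Lemma \ref{lem:SzSP} is satisfied, so applying it with $\cS \defeq \cS_{w_5}(y)$ and $\ccP$ equal to the primes in $(w_5, z]$ gives
\[
\P_{w_5,z}\big([0,y] \cap \cS_z = \varnothing \mid \cA_{w_5}\big) \le \exp(-\Theta_{w_5,z}\, S_{w_5})(1 + o(1)).
\]
Combining Mertens' theorem with \eqref{eq:theta} yields $\Theta_{w_5,z} \ge (1-o(1))\,8e^\gamma \log y/\log x$, so
\[
\Theta_{w_5,z}\, S_{w_5} \ge (1-o(1))(1+\eps)\,\frac{(\log(x/2))^2}{\log x} \ge (1 + \eps/2)\log x
\]
for large $x$, where the last step uses the elementary identity $(\log(x/2))^2/\log x = \log x - O(1)$.

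The main technical obstacle is ensuring that the aggregated $o(1)$ errors in the preceding display---most notably the $O(\log_4 x/\log_3 x)$ loss coming from Corollary \ref{cor:w1w5}---are genuinely smaller than $\eps = C(\log_3 x)^{-1/3}$ from \eqref{epsx}. Because $\log_4 x/\log_3 x = o((\log_3 x)^{-1/3})$, this holds for any sufficiently large absolute constant $C$. Combining the bounds then gives $\P(E_m) \le x^{-1 - \eps/2}(1+o(1)) + O(x^{-100})$, and summing over the at most $x$ choices of $m$ yields $\P(G_\cR(x) > y) \le x^{-\eps/2}$, as required.
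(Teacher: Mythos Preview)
Your argument is essentially the same as the paper's: reduce to bounding $\P(S_z=0)$ via translation invariance and a union bound over starting points, then control $S_{w_5}$ via Corollary~\ref{cor:w1w5} and finish with Lemma~\ref{lem:SzSP}.

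There is one technical slip worth noting. You take $z=z(m+y)$ depending on $m$, whereas the paper simply takes $z=z(x)$ uniformly. Your choice makes the inequality $\Theta_{w_5,z}\ge (1-o(1))\,8e^{\gamma}\log y/\log x$ correct (indeed $\Theta_{w_5,z}$ decreases in $z$), but it also means that for small $m$---say $m\le y^{16}$---the set $\ccP$ of primes in $(w_5,z(m+y)]$ can fail the hypothesis $\sum_{p\in\ccP}1/p\ge 1/10$ of Lemma~\ref{lem:SzSP}, so your citation of that lemma is not literally justified there. The cleanest repair is to observe that $\cR\cap(m,m+y]\supseteq \cS_{z(x)}\cap(m,m+y]$ for every $m\le x-y$, so $E_m\subseteq\{(m,m+y]\cap\cS_{z(x)}=\varnothing\}$ and you may take $z=z(x)$ throughout; this is exactly what the paper does, and then Lemma~\ref{lem:SzSP} applies once for all $m$. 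With that adjustment your proof is correct.
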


\begin{theorem}[Probabilistic lower bound for gap]\label{thm:gap-down}
If $x$ is large then
\[
\P\big[G_\cR(x)\ge g\big((1-\eps)\xi (\log 2x)^2\big)\big]\ge 1-O\big((\log x)^{-8}\big). 
\]
\end{theorem}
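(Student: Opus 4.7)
The plan is to adapt, for the correlated model $\cR$, the second-moment argument sketched at the end of Section~\ref{sec:background-remarks} for Granville's model $\cG$.  Set $y\defeq g((1-\eps)\xi\log^2(2x))$, so that $W_y\log y=(1-\eps)\xi\log^2(2x)+O(\log y)$ by~\eqref{Wgu}, and fix an optimal configuration $(a_p^\ast)_{p\le w_1}$ with $|(0,y]\cap\cS_{w_1}(a_p^\ast)|=W_y$.  For any realization of $\cA_{w_1}$, CRT produces a unique residue class $b=b(\cA_{w_1})\bmod Q$ (with $Q\defeq\prod_{p\le w_1}p$, so $\log Q\sim w_1\sim\log x/\sqrt{2\log_2 x}$) such that $a_p-b\equiv a_p^\ast\bmod p$ for every $p\le w_1$; consequently $T_m\defeq(m,m+y]\cap\cS_{w_1}$ has size exactly $W_y$ for every $m\equiv b\bmod Q$.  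Put $z_0\defeq z(x/2)$, $\cM\defeq\{m\in(x/2,x-y]:m\equiv b\bmod Q\}$ (so $|\cM|\gg x/Q$), and let $Y_m$ be the indicator of the event $T_m\cap\cS_{z_0}=\varnothing$; since $z(n)\ge z_0$ for $n\ge x/2$, any such event forces $(m,m+y]\cap\cR=\varnothing$, so it suffices to prove $\P(N'=0)\ll(\log x)^{-8}$ for $N'\defeq\sum_{m\in\cM}Y_m$.

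For the first moment, Corollary~\ref{cor:w1w5} applied to each individual $m\in\cM$ gives $|T_m\cap\cS_{w_5}|=\tfrac{W_y}{16}(1+O(\log_4 x/\log_3 x))$ with probability $1-O(x^{-100})$ conditional on $\cA_{w_1}$; a union bound over the $O(x)$ values $m\in\cM$ shows this holds \emph{simultaneously} for every such $m$ with probability $1-O(x^{-99})$.  Inside this good event, Lemma~\ref{lem:SzSP} applied with $w=w_5=y^8$ (the elements of $T_m\cap\cS_{w_5}$ lie in an interval of length $y<w_5$ so are automatically distinct modulo each $p>w_5$) produces
\[
\E(Y_m\mid\cA_{w_5})=(1-\Theta_{w_5,z_0})^{|T_m\cap\cS_{w_5}|}(1+O(y^{-6})).
\]
Mertens' theorem and~\eqref{eq:theta} yield $\Theta_{w_5,z_0}\sim 8 e^{\gamma}\log y/\log(x/2)$, and inserting the value of $W_y\log y$ together with $\xi=2e^{-\gamma}$ gives $\E(Y_m\mid\cA_{w_5})=x^{-(1-\eps)(1+o(1))}$; hence $\E(N'\mid\cA_{w_5})\gg(x/Q)\,x^{-(1-\eps)(1+o(1))}\gg(\log x)^{100}$.

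For the second moment, pair $m_1\ne m_2$ in $\cM$: then $|m_2-m_1|\ge Q\gg y$, so $T_{m_1}$ and $T_{m_2}$ are disjoint, and for every prime $p>w_5>y$ each residue class modulo $p$ contains at most one element of each $T_{m_i}\cap\cS_{w_5}$ (which spans an interval of length $y<p$).  Lemma~\ref{lem:VarSP} therefore applies to $\cS\defeq(T_{m_1}\cup T_{m_2})\cap\cS_{w_5}$ and factors
\[
\E(Y_{m_1}Y_{m_2}\mid\cA_{w_5})=\E(Y_{m_1}\mid\cA_{w_5})\,\E(Y_{m_2}\mid\cA_{w_5})\,(1+O(y^{-4})),
\]
so $\V(N'\mid\cA_{w_5})\le\E N'+O(y^{-4})(\E N')^2$.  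Chebyshev's inequality then yields $\P(N'=0\mid\cA_{w_5})\le 1/\E N'+O(y^{-4})=O((\log x)^{-8})$ (using $y\ge\log^2 x$ from~\eqref{g-bounds}); averaging over $\cA_{w_5}$ and absorbing the $O(x^{-99})$ exceptional probability from Corollary~\ref{cor:w1w5} closes the argument.

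The main obstacle will be bookkeeping the cumulative $o(1)$ in the exponent of $\E(Y_m\mid\cA_{w_5})=x^{-(1-\eps)(1+o(1))}$: the $O(\log_4 x/\log_3 x)$ relative error in $|T_m\cap\cS_{w_5}|$ from Corollary~\ref{cor:w1w5}, the $O((\log_2 x)^{-1/2})$ loss $\log Q/\log x$ that results from restricting to a single residue class modulo $Q$, and the smaller errors from Mertens' theorem all compound multiplicatively in the exponent of $x$, and the specific choice $\eps=C(\log_3 x)^{-1/3}$ from~\eqref{epsx} is calibrated precisely so as to dominate all of these losses for large $x$.
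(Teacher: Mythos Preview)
Your proposal is correct and follows essentially the same second-moment argument as the paper: restrict to $m\equiv b\pmod Q$ so that $|T_m\cap\cS_{w_1}|=W_y$, use Corollary~\ref{cor:w1w5} (with a union bound) to pin down $|T_m\cap\cS_{w_5}|$, then compute first and second moments in $\cA_{w_5,z_0}$ via Lemmas~\ref{lem:SzSP} and~\ref{lem:VarSP} and finish with Chebyshev.  The only notable difference is that the paper, after conditioning on $\cA_{w_5}$, passes to a \emph{single} popular value $\ell$ of $|T_m\cap\cS_{w_5}|$ (working on the subset $\cU_\ell$), so that the first moment is exactly $|\cU_\ell|(1-\Theta)^\ell$; you instead keep all of~$\cM$ and let the exponent vary with $m$, observing that Lemma~\ref{lem:VarSP} still gives the clean factorization $\E(Y_{m_1}Y_{m_2}\mid\cA_{w_5})=(1+O(y^{-4}))\E(Y_{m_1}\mid\cA_{w_5})\E(Y_{m_2}\mid\cA_{w_5})$, whence $\V(N')\le\E N'+O(y^{-4})(\E N')^2$.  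Your route is a mild simplification that avoids the pigeonholing step; the paper's restriction to $\cU_\ell$ buys a slightly cleaner first-moment formula but is not actually needed for the variance bound.  Your accounting of the $o(1)$ losses ($O(\eta)$ from Corollary~\ref{cor:w1w5}, $O((\log_3 x)^{-1/2})$ from $\log Q/\log x$) against $\eps\asymp(\log_3 x)^{-1/3}$ matches the paper's and is correct.
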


\begin{proof}[Proof of Theorem~\ref{thm:gap-up}]
Let $y\defeq g((1+\eps)\xi (\log \tfrac{x}{2})^2)$, so that by
\eqref{Wgu} we have
\be\label{Wyx-upp}
W_y \log y=(1+\eps)\xi (\log x)^2+O(\log x).
\ee
We also have by \eqref{eq:By-bounds} the bounds
\[
\log^2 x \ll y \ll (\log^2 x)\log_2 x.
\]
Let $z\defeq z(x)$. The probability that $\cR\cap [0,x]$ 
has a gap of size $\ge y$ does not exceed the probability
that $\cS_z\cap [0,x]$ has a gap of size $\ge y$, which in turn
is at most 
\change{
\[
\E\big|\{n\le x:[n,n+y]\cap\cS_z=\varnothing\}\big|\le x\cdot\PR(\cS_z=0).
\]
}

Let $w_1\defeq(y/\log y)^{1/2}$ and $w_5\defeq y^8$ as before.
Also put $\eta \defeq \frac{\log_4 x}{\log_3 x}$.
Applying Corollary~\ref{cor:w1w5} together with \eqref{Wyx-upp}, it follows that 
with probability $1-O(x^{-100})$ we have
\dalign{
S_{w_5}&=(1+O(\eta))\frac{S_{w_1}}{16}
\ge(1+O(\eta))\frac{W_y}{16} \\
&\ge \frac{(1+\eps+O(\eta))\,\xi (\log x)^2}{32\log_2x}\\
&\ge \frac{(1+2\eps/3)\,\xi (\log x)^2}{32\log_2x}
}
using \eqref{epsx} in the final step.
Fix $\cA_{w_5}$ so that $S_{w_5}$ satisfies this inequality.
Taking into account that
\[
\Theta_{w_5,z}=\frac{32\log_2x}{\xi\log x}\(1+O\pfrac{1}{\log_2 x}\),
\]
Lemma~\ref{lem:SzSP} now shows that
\[
\P_{w_5,z}(S_z=0)\ll(1-\Theta_{w_5,z})^{\change{S_{w_5}}}\ll x^{-1-\eps/2},
\]
as required.
\end{proof}

\begin{proof}[Proof of Theorem~\ref{thm:gap-down}]
Set $y\defeq g((1-\eps)\xi (\log 2x)^2)$, so that
\be\label{Wxy-lwr}
W_y \log y=(1-\eps)\xi \log^2 x+O(\log x).
\ee
Again, \eqref{eq:By-bounds} implies that
\[
\log^2 x \ll y \ll (\log^2 x) \frac{\log_2 x}{\log_3 x}.
\]
Let $z\defeq z(x/2)$, 
$w_1\defeq(y/\log y)^{1/2}$,
$w_5\defeq y^8$ and $\eta\defeq \frac{\log_4 x}{\log_3 x}$. In particular, $z\sim (x/2)^{1/e^{\gamma}}$ by 
\eqref{eq:theta}, \change{and
\be\label{eq:w1-upper}
w_1 \ll \frac{\log x}{(\log_3 x)^{1/2}}.
\ee
}
It suffices to show that with high probability,
$\cS_z\cap (x/2,x]$ has a gap of size $\ge y$, for this
implies that $\cR$ has a gap of size $\ge y$ within $[0,x]$.
For the sake of brevity we write
\[
\cF(u,v) \defeq [u,u+y] \setminus \bigcup_{p\le v} R_p,
\qquad F(u,v)\defeq|\cF(u,v)|.
\]
That is, $F(u,v)$ counts the number of elements in $[u,u+y]$
sieved by the primes $\le v$. In particular, $S_w=F(0,w)$.
\change{There is some vector $(b_p)_{p\in w_1}$ so that
there are exactly $W_y$ integers in $[0,y]$ that avoid the
residue classes $(b_p \bmod p)_{p\le w_1}$.}
Setting
\[
Q\defeq\prod_{p\le w_1}p,
\]
\change{for any $\cA_{w_1}$,} there is a progression $b\bmod Q$ such that 
\[
F(u,w_1)=W_y \qquad\text{whenever}\quad u\equiv b\bmod Q.
\]
Specifically, choose $b$ such that
\change{$b\equiv a_p-b_p\bmod p$} for all primes $p\le w_1$.
Let $\cU$ be the set of integers $u\equiv b\bmod Q$ such that
$[u,u+y]\subset (x/2,x]$. 
 We show that with high probability,
$F(u,z)=0$ for at least one $u\in\cU$.

By Corollary~\ref{cor:w1w5}, with probability at least $1-O(x^{-100})$,
we have for any given $u\in\cU$ the bound
\be\label{eq:Fw1w5}
F(u,w_5)=(\tfrac{1}{16}+O(\eta)) F(u,w_1)=(\tfrac{1}{16}+O(\eta)) W_y.
\ee
Let $E$ be the event that this bound holds for
\emph{every} $u\in \cU$.
By the union bound, $\P_{w_1,w_5}(E)\ge 1-O(x^{-99})$.
Conditioning on $E$, we denote
\[
\cU_r\defeq\{u\in\cU:F(u,w_5)=r \}\qquad(r\ge 0).
\]
The sets $\cU_r$ depend only on $\cA_{w_5}$, and $\cU_r=\varnothing$
unless $r=(\tfrac{1}{16}+O(\eta))W_y$ by \eqref{eq:Fw1w5}. 
Rather than work with all $r$, we focus on a popular value of $r$; thus,
let $\ell$ be fixed with the property that $|\cU_\ell|\ge|\cU_r|$
for all $r$. \change{By \eqref{eq:w1-upper}, we have}
\be\label{eq:cardUl}
|\cU_\ell|\gg \frac{|\cU|}{\eta W_y} \gg \frac{x}{QW_y}
=x e^{-O(w_1)}\gg x^{1-O((\log_3 x)^{-1/2})}.
\ee
Combining \eqref{Wxy-lwr} with \eqref{eq:Fw1w5} and \eqref{epsx}, we have
\be\label{eq:l-low2}
\ell\le(\tfrac{1}{16}+O(\eta))W_y
\le \frac{(1-(2/3)\eps)\xi(\log x)^2}{32\log_2x}.
\ee

Next, let
\[
M\defeq\big|\{u\in\cU_\ell:F(u,z)=0\}\big|,
\]
which counts those intervals indexed by \change{$u\in\cU_\ell$ for which
$\cF(u,w_5)$ is} 
covered by $\bigcup_{w_5<p\le z} R_p$.
We analyze $M$ using first and second moments.
Firstly, by Lemma~\ref{lem:SzSP},
\[
\E_{w_5,z}M=\sum_{u\in\cU_\ell}\P_{w_5,z}(F(u,z)=0)
=|\cU_\ell|(1-\Theta)^\ell(1+\red{O(y^3/w_5)}),
\]
where
\be\label{eq:Theta-boom!}
\Theta\defeq\Theta_{w_5,z}=\frac{32\log_2x}{\xi\log x}
\(1+O\pfrac{1}{\log_2 x}\).
\ee
To bound the second moment of $M$, apply Lemma~\ref{lem:VarSP} with
$\cS\defeq\cF(u,w_5) \cup \cF(u',w_5)$, where 
$u$ and $u'$ are distinct elements of $\cU_\ell$.
The hypotheses of Lemma~\ref{lem:VarSP} are satisfied
as any prime $p>w_5>y$ can divide at most two elements of
$\cS$.
We obtain
\begin{align*}
\E_{w_5,z} M^2 &=\E_{w_5,z} M+\ssum{u,u'\in \cU_\ell \\ u \ne u'} \P_{w_5,z} \( F(u,z)=F(u',z)=0 \) \\
&=|\cU_\ell|^2(1-\Theta)^{2\ell} (1+O(y^4/w_5))+O\big(|\cU_\ell|(1-\Theta)^\ell\big).
\end{align*}
By \eqref{eq:cardUl}, \eqref{eq:l-low2} and \eqref{eq:Theta-boom!}
we have
\[
|\cU_\ell|(1-\Theta)^\ell\ge x^{2\eps/3 - O((\log_3 x)^{-1/2})}
\ge x^{\eps/2}
\]
for large $x$, 
and hence we bound the variance by
\[
\sigma^2\defeq \V_{w_5,z}M=
\E_{w_5,z} M^2-(\E_{w_5,z} M)^2
\ll|\cU_\ell|^2(1-\Theta)^{2\ell}y^4/w_5.
\]
Thus, Chebyshev's inequality implies 
\[
\P_{w_5,z} \(M \ge \tfrac12 |\cU_\ell| (1-\Theta)^\ell \)
\ge 1-O(y^4/w_5)=1-O(1/y^4).
\]
In particular, with probability at least
$1-O(y^{-4})=1-O((\log x)^{-8})$ there is an interval $[u,u+y]$ in $(x/2,x]$
completely sieved out by $\cA_z$.
\end{proof}

\begin{proof}[Proof of Theorem~\ref{thm:conditional}]
Let $x_j\defeq 2^j$ vary over positive integers $j$,
and let $\eps>0$ be fixed. 
Theorem~\ref{thm:gap-up} implies that for large $j$ we have
\[
\P \big[ G_\cR(x_j) \le g((1+\eps)\xi \log^2 x_{j-1}) \big] \ge 1- x_j^{-\eps/2} \qquad (j \text{ large}).
\]
The convergence of $\sum_j x_j^{-\eps/2}$ implies, via the 
Borel-Cantelli lemma, that almost surely there is a $J$ so that
\[
 G_\cR(x_j) \le g((1+\eps)\xi \log^2 x_{j-1}) \qquad (j\ge J).
\]
As $G_\cR$ and $g$ are both increasing functions, the above relation
implies that for all $x_{j-1} < x \le x_j$ and $j>J$ we have
\[
G_\cR(x) \le G_\cR(x_j) \le g((1+\eps)\xi \log^2 x_{j-1})
\le g((1+\eps)\xi \log^2 x),
\]
In a similar manner, Theorem~\ref{thm:gap-down} and Borel-Cantelli
imply that almost surely there is a $J$ so that
\[
 G_\cR(x_j) \ge g((1-\eps)\xi \log^2 x_{j+1}) \qquad (j\ge J).
\]
As before, this implies that
\[
G_\cR(x) \ge g\big( (1-\eps)\xi \log^2 x\big) \qquad (x\ge x_{J}).
\]
\end{proof}


{\Large\section{Large gaps from Hardy-Littlewood}\label{sec:HLA}}

To prove Theorems~\ref{thm:HLgaps} and \ref{thm:HLgaps-avg},
we start with a simple inclusion-exclusion result (a special case of the Bonferroni inequalities or the ``Brun pure sieve'').

\bigskip

\begin{lemma}[Brun's sieve]\label{lem:UK}
Suppose that $y \geq 1$, let $\cN, \cA$ be sets of positive integers,
and put
\[
T\defeq\sum_{n\in\cN}\prod_{h\in[0,y]}(1-\ind{\cA}(n+h))
\]
and
\[
U_K\defeq \sum_{k=0}^K(-1)^k\ssum{\cH\subset[0,y]\\|\cH|=k} \;
\sum_{n\in\cN} \;\prod_{h\in\cH} \ind{\cA}(n+h)\qquad
(K\ge 0).
\]
Then, for any even $K$ we have $T\le U_K$,
and for any odd $K$ we have $T\ge U_K$.
\end{lemma}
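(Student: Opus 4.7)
The plan is to derive Lemma \ref{lem:UK} from a pointwise Bonferroni inequality, after swapping the order of summation. Throughout, I interpret $[0,y]$ as $[0,y] \cap \Z$, so the products and subset sums range over finite sets.

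First I would expand the product that defines $T$. For each fixed $n$, distributivity gives
\[
\prod_{h\in[0,y]}(1-\ind{\cA}(n+h))=\sum_{\cH\subset [0,y]}(-1)^{|\cH|}\prod_{h\in\cH}\ind{\cA}(n+h).
\]
Letting $m(n)\defeq|\{h\in[0,y]:n+h\in\cA\}|$, this inner expression equals $\sum_{\cH\subset B}(-1)^{|\cH|}=(1-1)^{m(n)}=\ind{m(n)=0}$, where $B\defeq\{h\in[0,y]:n+h\in\cA\}$ has size $m(n)$. Swapping the roles of $\cH$ and $n$, the same manipulation shows
\[
U_K=\sum_{n\in\cN}\sum_{k=0}^{K}(-1)^k\binom{m(n)}{k},
\]
since for fixed $\cH\subset[0,y]$ the product $\prod_{h\in\cH}\ind{\cA}(n+h)$ equals $1$ precisely when $\cH\subset B$, and there are $\binom{m(n)}{k}$ such subsets of each size $k$.

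Next, I would reduce both sides to a pointwise statement. It suffices to show that for every nonnegative integer $m$ and every $K\ge 0$:
\[
\ind{m=0}\le\sum_{k=0}^{K}(-1)^k\binom{m}{k}\quad(K\text{ even}),\qquad \ind{m=0}\ge\sum_{k=0}^{K}(-1)^k\binom{m}{k}\quad(K\text{ odd}).
\]
Summing this inequality over $n\in\cN$ with $m=m(n)$ and using the two identities above immediately yields Lemma \ref{lem:UK}.

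The pointwise inequality is an instance of the classical Bonferroni inequality. The case $m=0$ is trivial (both sides equal $1$). For $m\ge 1$ one has the identity
\[
\sum_{k=0}^{K}(-1)^k\binom{m}{k}=(-1)^K\binom{m-1}{K},
\]
which is proved in one line by induction on $K$ using Pascal's rule $\binom{m}{k}=\binom{m-1}{k}+\binom{m-1}{k-1}$ and telescoping (and the convention $\binom{m-1}{K}=0$ if $K\ge m$). Thus the partial sum has sign $(-1)^K$ and the desired inequality holds. I do not anticipate a real obstacle; the only mild subtleties are keeping track of the two equivalent ways of organizing the alternating sum (by $n$-first versus $\cH$-first), and verifying the combinatorial identity in the edge cases $K\ge m$, where both sides of the identity vanish.
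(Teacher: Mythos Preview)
Your proposal is correct and follows essentially the same route as the paper: both arguments rewrite $T=\sum_{n\in\cN}\ind{m(n)=0}$ and $U_K=\sum_{n\in\cN}\sum_{k=0}^K(-1)^k\binom{m(n)}{k}$, then appeal to the pointwise Bonferroni inequality $\ind{m=0}\lessgtr\sum_{k\le K}(-1)^k\binom{m}{k}$ according to the parity of $K$. The only difference is cosmetic: you supply a proof of the Bonferroni inequality via the identity $\sum_{k=0}^K(-1)^k\binom{m}{k}=(-1)^K\binom{m-1}{K}$, whereas the paper simply asserts it.
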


\begin{proof}
For any integers $K,m\ge 0$ let
\[
\delta_K(m)\defeq\sum_{k=0}^K(-1)^k\binom{m}{k}\mand
\delta(m)\defeq\begin{cases}
1&\quad\hbox{if $m=0$},\\
0&\quad\hbox{if $m\ge 1$}.
\end{cases}
\]
Observe that
\[
\delta(m)\le\delta_K(m)\quad\text{($K$ even)}\mand
\delta(m)\ge\delta_K(m)\quad\text{($K$ odd)};
\]
hence, taking $A(n)\defeq\big|\{0\le h\le y:n+h\in\cA\}\big|$ we have
\[
T=\sum_{n\in\cN}\delta(A(n))=\sum_{n\in\cN} \delta_K(A(n))+\theta,
\]
where $\theta\ge 0$ if $K$ is even and $\theta\le 0$ if $K$ is odd. Also,
\[
\sum_{n\in\cN} \delta_K(A(n))
=\sum_{k=0}^K(-1)^k\sum_{n\in\cN}\binom{A(n)}{k}=U_K
\]
since
\[
\binom{A(n)}{k}=\sum_{\substack{\cH\subset[0,y] \\|\cH|=k}}
\;\prod_{h\in\cH}\ind{\cA}(n+h)\qquad(n\in\cN),
\]
and the lemma is proved.
\end{proof}

\medskip

\begin{proof}[Proof of Theorem~\ref{thm:HLgaps}]
Although Theorem~\ref{thm:HLgaps} concerns the behavior
of a specific set $\cA$, our first task is to express the
gap-counting function for $\cA$ in terms of the random quantities
with which we have been working in the past few sections.

First, observe that \eqref{eq:HLA} with $\cH=\{0\}$ implies that
\[
\big|\{n\le x:n\in\cA\}\big|\sim x/\log x,
\]
and it follows trivially that $G_\cA(x) \gg \log x$. Therefore,
by adjusting the implied constant in the conclusion of the theorem, we may assume that 
\be\label{cD}
\kappa \ge D \frac{\log_2 x}{\log x}
\ee
 for a sufficiently large constant $D$.

Let $x$ be a large real number, put $\cN\defeq[x/2,x]$ 
and let $y,K$ be integer parameters to be chosen later, with $K$ odd and with $K\le \frac{\kappa \log x}{2\log_2 x}$.
Define $T$ and $U_K$ as in Lemma~\ref{lem:UK}.
Since $T\ge U_K$ by Lemma~\ref{lem:UK},
our aim is to show that $U_K\ge 1$. Using \eqref{eq:HLA} we see that
\[
U_K=\sum_{k=0}^K(-1)^k\int_{x/2}^{x}\frac{1}{(\log t)^k}
\ssum{\cH\subset[0,y]\\|\cH|=k}\fS(\cH)\,dt+O(E),
\]
where
\[
E\defeq Kx^{1-\kappa}\binom{y+1}{K}.
\]
\change{By Lemma~\ref{lem:VH}}, replacing $\fS(\cH)/\log^k t$ with $\TrunSS{z(t)}$ induces an additive error of size $O(E)$
since $\kappa\le 1/2$.
Also, \eqref{VHzSz} implies that
\[
\ssum{\cH\subset[0,y]\\|\cH|=k}\TrunSS{z(t)}
=\E_{z(t)}\binom{S_{z(t)}}{k},
\]
and we get
\[
U_K=\int_{x/2}^{x}\E_{z(t)}\sum_{k=0}^K(-1)^k\binom{S_{z(t)}}{k}\, dt+O(E).
\]
Since $K$ is odd, the sum on $k$ is a lower bound for
$\P(S_{z(t)}=0)$; adding the term $k=K+1$ switches the
inequality (cf. the proof of Lemma~\ref{lem:UK}) and thus
\be\label{UK1}
U_K \ge \int_{x/2}^{x} \P(S_{z(t)}=0) - 
\E_{z(t)}\binom{S_{z(t)}}{K+1}\, dt+O(E).
\ee

Let
\[
w\defeq y^4,\qquad
z\defeq z(x/2).
\]

The upper bound sieve (Lemma~\ref{lem:sieve-upper}) implies
the crude bound $S_w \le Cy/\log y$ for some absolute constant $C$.
We now put
\be\label{eq:HLy_and_K}
y\defeq\frac{\kappa \,\xi \log^2 x}{400 C \log_2x}
\mand
K\defeq 2\fl{\frac{100 C y}{\log x}}-1.
\ee
With these choices, $K \le \frac{\kappa\log x}{2\log_2 x}$
and, using \eqref{cD}, we have 
\be\label{yD}
y \ge \frac{D}{400C}\log x.
\ee
It also follows that
\[
E \ll x^{1-\kappa} (\log x)^{K} \ll x^{1-\kappa+\kappa\,\xi/2}\ll x^{1-\kappa/3}.
\]
for all large $x$.
Corollary \ref{cor:binomSk}
and the crude bound $\Theta_{w,z} \le 8\frac{\log y}{\log x}$
imply that
\dalign{
\E_{z(t)}\binom{S_{z(t)}}{K+1} &\le \E_{z}\binom{S_{z}}{K+1}\\
&\ll \Theta_{w,z}^{K+1} \E_w \binom{S_w}{K+1}\\
&\ll \(\Theta_{w,z} \frac{eCy}{K\log y}\)^{K+1} \\
&\ll e^{-K} \ll e^{-200Cy/\log x},
}
where we used \eqref{eq:HLy_and_K} in the last step.
It remains to show that $\P_{z(t)}(S_{z(t)}=0)$ is substantially larger.
 Lemma~\ref{lem:SzSP} implies immediately that
\dalign{
\P_z(S_{z(t)}=0) &\ge \P_{z} (S_z=0) \gg (1-\Theta_{w,z})^{S_w} \\
&\gg e^{-\Theta_{w,z} (Cy/\log y)} \ge e^{-8C y/\log x},
}
as required. Combining these estimates with \eqref{UK1} gives
\[
U_K \gg x e^{-8C y/\log x}+O(xe^{-200Cy/\log x}+x^{1-c/3}) \gg x e^{-8C y/\log x},
\]
the last inequality following from \eqref{yD}, the fact that
$D$ is sufficiently large, and that $y/\log x \ll \kappa/\log_2 x$.
This completes the proof of 
 Theorem~\ref{thm:HLgaps}.
\end{proof}


\begin{proof}[Proof of Theorem~\ref{thm:HLgaps-avg}]
Let $x$ be large, let $\eps>0$, and let
$y\defeq g((1-\eps)c\,\xi\log^2 x).$ By \eqref{Wgu},
\be\label{eq:HLA:xy}
W_y \log y=(1-\eps)c\,\xi\log^2 x+O_c(\log_2 x).
\ee
In particular, \eqref{eq:xy} holds, with $\alpha,\beta$ depending on $c$.
Also, from \eqref{eq:By-bounds} we have
\be\label{HLA:y-bounds}
(c/2) \log^2 x \le y=o((\log^2 x)\log_2 x).
 \ee
Let
\[
w_1 \defeq (y/\log y)^{1/2}, \qquad w_5\defeq y^8, \qquad z\defeq z(x/2).
\] 

Again, let $\cN\defeq (x/2,x]$, and define $C$ as in the previous proof.
We apply Lemma~\ref{lem:UK} with
\[
K\defeq 2\fl{\frac{100Cy}{\log x}}-1,
\]
so that $K\le \frac{200 C y}{\log x}$.
Similarly to \eqref{UK1} we get that
\be\label{UK2}
U_K \ge \int_{x/2}^{x} \P(S_{z(t)}=0) - 
\E_{z(t)}\binom{S_{z(t)}}{K+1}\, dt+O(E),
\ee
where, because the function $\fS_{z(t)}(\cH)$ appears 
already in \eqref{eq:HLA*}, as does the averaging over $\cH$,
we have
\be\label{HLA:E}
E \ll K x^{1-c} \ll x^{1-c}\log^2 x.
\ee
By the same reasoning as in the proof of Theorem~\ref{thm:HLgaps},
we get that
\be\label{HLA:Ez}
\E_{z(t)}\binom{S_{z(t)}}{K+1} \ll e^{-K} \ll x^{-10c},
\ee
where we used \eqref{HLA:y-bounds} in the last step.

Let $w\defeq y^8$
and fix $\cA_w$. By Lemma~\ref{lem:SzSP} we have
\be\label{HLA:PSz0}
\P_{w,z}(S_{z}=0)=(1-\Theta_{w,z})^{S_w}(1+\red{O(y^{-5})}).
\ee

Now put $w_1\defeq(y/\log y)^{1/2}$,
and let $\cA_{w_1}$ be fixed such that $S_{w_1}=W_y$.
This occurs with probability $\ge x^{-o(1)}$,
since $(y/\log y)^{1/2}=o(\log x)$ by \eqref{HLA:y-bounds}.
Conditional on $\cA_{w_1}$,
Corollary~\ref{cor:w1w5} implies that with probability
at least $1-O(x^{-100})$ we have
\[
S_w=(\tfrac{1}{16}+O(\eta))S_{w_1}=(\tfrac{1}{16}+O(\eta))W_y,
\]
where $\eta\defeq\frac{\log_4 x}{\log_3 x}$ as before
and the implied constants may depend on $c$.
Now fix $w$ such that the above holds.
Since
\[
\Theta_{w,z}=(1+O(\eta)) \frac{16\log y}{\xi\log x},
\]
\eqref{eq:HLA:xy} implies that
\[
\Theta_{w,z}S_w=(1+O(\eta))
(1-\eps) c \log x,
\]
where we have used \eqref{eq:HLA:xy} in the last step.
Inserting this last estimate into \eqref{HLA:PSz0}, we conclude that
\be\label{HLA:PSz}
\P_z(S_z=0) \gg e^{-(1+O(\eta))(1-\eps)c\log x} \gg x^{-(1-\eps/2)c}
\ee
In particular, the right side
of \eqref{HLA:PSz} has larger order than the right sides
in \eqref{HLA:E} and \eqref{HLA:Ez}. Thus, inserting
\eqref{HLA:E}, \eqref{HLA:Ez} and \eqref{HLA:PSz} into
\eqref{UK2}, we conclude that $U_K\ge 1$ if $x$ is sufficiently large
depending on $\eps$. By a simple diagonalization argument, the same
claim then holds for some $\eps=\eps(x)=o(1)$ going to zero
sufficiently slowly as $x \to \infty$. This completes the
proof of Theorem~\ref{thm:HLgaps-avg}.
\end{proof}


{\Large\section{The influence of exceptional zeros}
\label{sec:exceptional_zeros}}

In this section, we show that the existence of
exceptional zeros implies that $W_y$ 
is rather smaller than the upper bound in 
\eqref{eq:By-bounds} infinitely often.

\bigskip

\begin{theorem}\label{Wydeltaq}
Let $q\in\NN$, and suppose that there is a real Dirichlet character
$\chi_q$ mod $q$ such that $L(1-\delta_q,\chi_q)=0$ and
$0<\delta_q \le \frac{c}{\log q}$, where $c\defeq 1/11^2$. For
\be\label{ydeltaq}
y\defeq\exp \bigg\{ \pfrac{\log q}{\delta_q}^{1/2} \bigg\}
\ee
we have
\[
W_y \ll \delta_q y \change{=\frac{y\log q}{\log^2 y}. }
\]
\end{theorem}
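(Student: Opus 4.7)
The strategy is to exhibit an explicit configuration of residue classes $\{a_p \bmod p\}_{p \le z}$ with $z \defeq (y/\log y)^{1/2}$, constructed from the real character $\chi = \chi_q$, that forces $|[0,y]\cap \cS_z| \ll \delta_q\,y$.

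The analytic input is the classical estimate $L(1,\chi) \asymp \delta_q \log q$, which follows from $L(1,\chi) = \int_{1-\delta_q}^{1} L'(s,\chi)\,ds$ together with a Vinogradov-type bound $|L'(s,\chi)| \ll \log^2 q$ on a neighborhood of $s=1$ (with the matching lower estimate coming from the class-number formula). From $\log L(1,\chi) = \sum_p \chi(p)/p + O(1)$ one reads off the bias
$$\sum_{p \le z} \frac{\chi(p)}{p} \le \log(\delta_q \log q) + O(1),$$
so that the primes $p \le z$ with $\chi(p) = -1$ outweigh those with $\chi(p) = +1$ substantially in logarithmic measure once $\delta_q$ is small.

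The sieve is constructed to exploit this bias. For each prime $p \le z$ with $\chi(p) = -1$, set $a_p \equiv 0 \pmod p$, thereby sieving out all multiples of $p$. For each prime $p \le z$ with $\chi(p) = +1$ (and the finitely many primes dividing $q$), choose $a_p$ via the Chinese Remainder Theorem so that any survivor $n \in [0,y] \cap \cS_z$ additionally lies in a residue class $r \bmod q$ on which $\chi(r) = +1$. With this configuration, a survivor $n$ satisfies simultaneously (i) $n$ is coprime to every prime $p \le z$ with $\chi(p) = -1$, and (ii) $\chi(n) = +1$.

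The survivor count is then evaluated by a Brun-type inclusion-exclusion detecting condition (i), combined with the identity $\mathbf{1}_{\chi(n) = +1} = \tfrac12\bigl(\mathbf{1}_{(n,q) = 1} + \chi(n)\bigr)$ expanding condition (ii). The resulting expression is a short linear combination of sums of the shape $\sum_{n \le y,\,(n,P) = 1} \chi(n)$, where $P$ is a product of small $\chi = -1$ primes. Each such sum is estimated via the hyperbola method and the P\'olya--Vinogradov inequality, contributing a main term of order $L(1,\chi)\,y/\log y$ and an error $O(\sqrt{yq})$. Substituting $L(1,\chi) \ll \delta_q \log q$ and $\log y = (\log q/\delta_q)^{1/2}$, the main term becomes $\ll \delta_q\, y\, (\delta_q \log q)^{1/2}$, which is $\ll \delta_q y$ since $\delta_q \log q \le c < 1$; the error $O(\sqrt{yq})$ is likewise $\ll \delta_q y$ because $\log y \ge \log q$ at the chosen scale.

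The main obstacle is making the CRT-based construction of the $a_p$ for the $\chi(p)=+1$ primes rigorous: one must choose these residues so that (a) the sieve is genuinely compatible with the modulo-$q$ structure (so that the desired character sum actually emerges in the counting), and (b) the $a_p$ do not over-sieve and thereby inflate the survivor count beyond the target bound. A secondary technical matter is handling the primes $p \mid q$, which contribute a multiplicative loss of $O(\log\log q)$ that is negligible at the scale $y = \exp((\log q/\delta_q)^{1/2})$.
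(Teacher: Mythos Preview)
The construction has a genuine gap: condition (ii) cannot be enforced by the sieve. Removing one residue class $a_p\bmod p$ for each prime $p$ with $\chi(p)=+1$ places no constraint on $n\bmod q$, since such $p$ do not divide $q$; and for the primes $p\mid q$, removing a single class modulo each still leaves $\phi(q)$ residue classes modulo $q$, half of which satisfy $\chi=-1$. So there is no CRT choice of the $a_p$ that forces $\chi(n)=+1$ for every survivor, and the character-sum evaluation that follows does not apply to the actual survivor set. Without (ii), condition (i) alone (coprimality to the $\chi=-1$ primes $\le z$) yields a survivor count of order $y\prod_{p\le z,\,\chi(p)=-1}(1-1/p)$; under an exceptional zero almost all primes $p\le z$ have $\chi(p)=-1$, so this product is $\asymp 1/\log y$, and $y/\log y$ exceeds the target $\delta_q y$ by a factor $(\delta_q\log q)^{-1/2}\to\infty$. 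Using the remaining $\chi(p)\ne -1$ primes greedily can only gain a further bounded factor, so the argument as outlined cannot reach $\ll\delta_q y$.

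The paper's proof is quite different and bypasses character sums on the sieve side. One sets $a_p\equiv -q^{-1}\pmod p$ for each $p\nmid q$ with $p\le (y/\log y)^{1/2}$; then $n\not\equiv a_p\pmod p$ is equivalent to $p\nmid qn+1$, so every survivor $n$ makes $m=qn+1$ an integer in the progression $1\bmod q$ with no prime factor $\le (y/\log y)^{1/2}$, hence $m$ is prime or a product of two large primes. Gallagher's prime number theorem gives $\pi(qy+1;q,1)\ll \delta_q\,qy/\phi(q)$, Brun--Titchmarsh bounds the semiprime contribution by $O\bigl(qy\log q/(\phi(q)\log^2 y)\bigr)$, and a greedy choice of $a_p$ for $p\mid q$ gains a factor $\phi(q)/q$. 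Since $\log q/\log^2 y=\delta_q$ at the chosen scale, both terms are $\ll \delta_q y$.
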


\begin{proof}
\change{
Denote by $\pi(x;q,a)$ the number of primes $p\le x$ lying in the 
progression $a\bmod q$. By hypothesis, $qy \ge q^{1+1/\sqrt{c}}=q^{12}$,
therefore we may apply \cite[Corollary 1.4]{TZ}, obtaining
\begin{align*}
\pi(qy+1;q,1) &\le \sqrt{y/q}+\frac{2}{\log(qy)} \ssum{\sqrt{qy}<p\le qy \\ p\equiv 1\pmod{q}}
\log p \\
&\ll \sqrt{y/q}+\frac{\lambda qy}{\phi(q) \log(qy)}, 
\end{align*}
where
\[
\lambda \defeq 1 - (qy)^{-\delta_q}/(1-\delta_q) \ll \delta_q \log (qy).
\]
By Siegel's Theorem \cite[\S 21]{Dav}, for any $\eps>0$,
$\delta_q \gg_\eps q^{-\eps}$.
We conclude that
\[
\pi(qy+1;q,1) \ll \frac{\delta_q qy}{\phi(q)}.
\]
This may also be deduced from Gallagher's prime number theorem \cite[Theorem 7]{Gal70}.
}

Define the residue classes $a_p$ by $qa_p+1\equiv 0\bmod{p}$
when $p\nmid q$. \change{ Let $\cT$ denote the set of $n\le y$ with
 $n\not\equiv a_p\bmod{p}$ for all $p\nmid q$ such that $p\le \sqrt{y/\log y}$.
Then for any $n\in \cT$,} $qn+1$
is either prime or the product of two primes $>\sqrt{y/\log y}$.
Then we make a greedy choice of $a_p$ for $p\mid q$, \change{choosing successively
$a_p$ so that $a_p\bmod p$ covers a proportion at least $1/p$ of the remaining elements of $\cT$.}
 This shows that
\begin{align*}
W_y &\le \frac{\phi(q)}{q} |\cT| \\
&\le \frac{\phi(q)}{q} \bigg[ \pi(qy+1;q,1)+\sum_{\sqrt{y/\log y} < p \le \sqrt{qy+1}} \pi\( \frac{qy+1}{p};q,\overline{p}\) \bigg],
\end{align*}
\change{where $\overline{p}$ is the inverse of $p$ modulo $q$. 
Siegel's theorem implies} that $\log y \le q^{o(1)}$.
Applying the Brun-Titchmarsh theorem to the sum over $p$, we see that
\begin{align*}
W_y \ll \frac{\phi(q)}{q} \bigg[ \frac{qy\delta_q}{\phi(q)}+\frac{qy\log(q\log y)}{\phi(q)\log^2 y}\bigg]
\ll y \Big[ \delta_q+\frac{\log q}{\log^2 y} \Big]
\ll \delta_q y.
\end{align*}
This completes the proof.
\end{proof}

\begin{proof}[Proof of Theorem~\ref{thm:exceptional_zeros}]
Let $q\in Q$, and apply Thereom \ref{Wydeltaq} with
$y=y_q$ defined by \eqref{ydeltaq}.
By assumption, $\frac{\log y_q}{\log q} \to \infty$
as $q\to \infty$, and hence that
\[
\delta_q=\frac{\log q}{\log^2 y_q}=o\pfrac{1}{\log y_q}.
\]
This shows that $W_{y_q}=o (y_q/\log y_q)$, and the remaining
parts of Theorem~\ref{thm:exceptional_zeros} follow immediately.
\end{proof}


\end{document}